\tikzset{
	symbol/.style={
		draw=none,
		every to/.append style={
			edge node={node [sloped, allow upside down, auto=false]{$#1$}}}
	}
}
\newtheorem{theorem}{Theorem}[section]
\newtheorem{lemma}[theorem]{Lemma}
\newtheorem{corollary}[theorem]{Corollary}
\theoremstyle{definition}
\newtheorem{definition}[theorem]{Definition}
\theoremstyle{remark}
\newcommand{\field}[1]{\mathbb{#1}}
\newcommand{\R}{\field{R}}
\newcommand{\N}{\field{N}}
\newcommand{\C}{\field{C}}
\newcommand{\Z}{\field{Z}}
\newcommand{\Q}{\field{Q}}
\begin{document}
 \title[Cohomological equation for geodesic flows] 
 {Cohomological equation for geodesic flows on flat surfaces}

\author{Giovanni Forni}
\email{gforni@umd.edu}
\author{Nelson Moll}
\email{nmoll@umd.edu}

\address{Department of Mathematics, William E. Kirwan Hall, 4176 Campus Dr, University of Maryland, College Park, MD 20742, USA.}

\begin{abstract} We prove the existence of solutions of the cohomological equation for the geodesic flow on the unit tangent bundle of a compact flat surface with finitely many cone points. We also prove the ergodicity of the holonomy foliation for surfaces with non-rational holonomy, and the cohomology-free property of the
horizontal foliated Laplacian under a simultaneous Diophantine condition.

\end{abstract}

\maketitle 
\tableofcontents 

\section{Introduction}
In this paper, we will prove the existence of solutions of the cohomological equation for the geodesic flow on the unit tangent bundle of a compact flat surface with finitely many cone points.  Our approach is based on harmonic analysis, with the aim of adapting 
\cite{Forni1997} and \cite{Forni2022Twisted} from the case of translation surfaces to that of flat surfaces with non-rational holonomy.  We prove, under an irrationality condition on the cone angles, that the horizontal (holonomy) foliation is ergodic and derive by a Cheeger-type argument bounds on the eigenvalues  of the horizontal foliated Laplacian.  These bounds are proved under a simultaneous Diophantine condition on the angles of the cone points.  As an immediate consequence we prove that the foliated Laplacian is hypoelliptic with range of codimension one. These results will allow us to find solutions of the cohomological equation for the geodesic flow on the surface. Distributional solutions exist for all sufficiently smooth data of zero average, while smooth solutions exist for data in the kernel (perpendicular) of the infinite dimensional space of invariant distributions. In other terms,
the flat geodesic flow is stable in the sense of A.~Katok
\cite{Katok99}, \cite{Katok03}.

\smallskip

Let the pair $(S,R)$ be a flat surface together with a flat metric $R$ that has finitely many cone points at the set $\Sigma=\{p_1, \dots, p_\sigma\} \subset S$.  We will consider the geodesic flow of $R$, which is euclidean away from each cone point.  
The unit tangent bundle $M := T_1(S \setminus \Sigma)$ over the complement of the set of cone points has an infinitesimal homogeneous structure
given by vector fields $\{X, Y, \Theta\}$ satisfying the commutation relations
$$
[X,Y]=0 \,, \quad [\Theta, X]=Y \,, \quad  [\Theta, Y]= -X\,.
$$
The flow $\phi_\R^X$ is the geodesic flow of the flat metric $R$, which acts on the tangent bundle $TS$ by the formula $\phi_t^X(v) = \gamma_v(t)$, where $\gamma_v(t)$ is the unit tangent vector to the geodesic with initial condition equal to $v\in TS$.  The flow $\phi_\R^Y$ denotes the orthogonal geodesic flow,
which acts on the tangent bundle $TS$ by the formula $\phi_t^X(v)^\perp = \gamma_{v^\perp}(t)$, where $\gamma_{v^\perp}(t)$ is the unit tangent vector to the geodesic with initial condition equal to the perpendicular vector $v^\perp$ to $v\in TS$ in the positive direction. Because of the presence of cone points, the flows $\phi_\R^X$ and $\phi_\R^Y$ are defined on the  complement of a countable union of hypersurfaces (given by all unit tangent vectors generating geodesics which end in cone points). 
The flow $\phi^\Theta_\R$ is the flow of rotations in the fiber of $T_1(S\setminus \Sigma)$ which is everywhere defined and smooth on $T_1(S\setminus \Sigma)$.

Let $\{\hat X, \hat Y, \hat \Theta\}$ the system of $1$-forms to 
$\{X, Y, \Theta\}$. We have the structural equations
$$
d \hat X= - \hat \Theta \wedge \hat Y, \quad d\hat Y = \hat \Theta \wedge \hat X
\quad \text{ and }\quad d \hat \Theta =0 \,.
$$
Let $\text{vol}_M = \hat X \wedge \hat Y \wedge \hat \Theta$ denote the volume
form on $M:=T_1(S\setminus\Sigma)$. Since by the above formulas
$$
(d \circ\imath_{X}) \text{vol}_M = (d \circ\imath_{Y}) \text{vol}_M = (d \circ\imath_{\Theta})\text{vol}_M=0 \,,
$$
it follows that the flows $\phi^X_\R$, $\phi^Y_\R$ and $\phi^\Theta_\R$ are
volume preserving, that is, in terms of the Lie derivatives with respect to the generators, 
$$
\mathcal L_X \text{vol}_M = \mathcal L_Y \text{vol}_M = \mathcal L_\Theta \text{vol}_M =0\,.
$$

We will be concerned with the dynamics and geometry of horizontal (holonomy)
foliation, which we now introduce.

\begin{definition}
\label{def:hol_fol}
The foliation $\mathcal{F}_H$ in $M$ tangent to the $2$-dimensional integrable distribution $\{X,Y\}$ (which exists by the Frobenius theorem) is called the
\emph{horizontal} or \emph{holonomy} foliation. Its leaves coincide
with the equivalence classes of unit tangent vectors under the equivalence relation induced by the parallel transport with respect to the flat metric.
\end{definition}
The leaves of the horizontal foliation are translation surfaces, which are
compact if $(S,R)$ is itself a translation surface, and non-compact if $(S,R)$
has non-rational holonomy. In the latter case, the leaves have the topology of
the Loch Ness monster, as proved in \cite{Valdez_2009b}. 

Our first result is the following

\begin{theorem}
\label{thm:hol_fol_erg}
Suppose that $(R,S)$ has non-rational holonomy (a condition that holds in particular if at least one of the cone angles is irrational).  Then the horizontal foliation $\mathcal F_H$ is ergodic, that is, any square integrable function on $M$ constant along almost every leaf of $\mathcal F_H$ is almost everywhere constant.
\end{theorem}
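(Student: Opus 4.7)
I will argue that any leaf-constant $L^2$ function must in fact be invariant under the rotation flow $\phi^\Theta_\R$, and then combine this with the leaf invariance along $X$ and $Y$ to force constancy on $M$.

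The first key observation I will establish is that, for every element $\alpha$ of the linear holonomy group $\mathrm{Hol}(R)\subset SO(2)$ of the flat metric, the diffeomorphism $\phi^\Theta_\alpha$ preserves each individual leaf of $\mathcal{F}_H$. The argument is as follows: given $v\in M$ over a point $p\in S\setminus \Sigma$, realize $\alpha$ as the linear holonomy of some loop $\gamma$ at $p$; then parallel transporting $v$ along $\gamma$ produces a path in $M$ everywhere tangent to the distribution $\{X,Y\}$, hence lying entirely in the leaf $L_v$ of $\mathcal{F}_H$ through $v$, with endpoint $R_\alpha v = \phi^\Theta_\alpha(v)$. (The basepoint independence of $\mathrm{Hol}(R)\subset SO(2)$ uses the fact that $SO(2)$ is abelian.) It then follows that any $f\in L^2(M)$ that is constant on almost every leaf of $\mathcal{F}_H$ satisfies $f\circ\phi^\Theta_\alpha=f$ almost everywhere for each $\alpha\in\mathrm{Hol}(R)$: on each leaf on which $f$ is constant, the rotation $\phi^\Theta_\alpha$ merely permutes the points of the leaf and preserves the constant value of $f$ there.

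Next I will invoke the non-rational holonomy hypothesis, under which $\mathrm{Hol}(R)$ is an infinite, and therefore dense, subgroup of $SO(2)$. Since $\phi^\Theta_\R$ is a smooth globally defined volume-preserving flow on $M$, it induces a strongly continuous unitary representation of $\R/2\pi\Z$ on $L^2(M)$, so the stabilizer $\{\alpha\in\R/2\pi\Z : f\circ\phi^\Theta_\alpha = f\}$ is closed; containing a dense subgroup, it must equal $\R/2\pi\Z$. Hence $f$ is $\phi^\Theta_\R$-invariant and descends to a function $g\in L^2(S\setminus\Sigma)$ via $f=g\circ\pi$, where $\pi:M\to S\setminus\Sigma$ is the bundle projection. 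To finish, I will use that at $v\in T_1 q$ one has $(Xf)(v) = dg_q(v)$; the leaf-constancy of $f$ gives $Xf=0$ distributionally, so $dg=0$ as a distribution on $S\setminus\Sigma$. Connectedness of $S\setminus\Sigma$ then forces $g$, and thus $f$, to be almost everywhere constant.

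The main subtlety I expect to have to address carefully is the implication ``$f$ leaf-constant $\Rightarrow$ $f\circ\phi^\Theta_\alpha=f$ a.e. for $\alpha\in\mathrm{Hol}(R)$'': one must control the null set of exceptional leaves on which $f$ may fail to be constant. This is handled by the fact that $\phi^\Theta_\alpha$ preserves the volume form $\mathrm{vol}_M$, so the image of any such null set is again of measure zero, and the identity holds after redefining $f$ on a set of measure zero. Beyond this measure-theoretic bookkeeping, the proof is a direct geometric observation coupled with strong continuity of the circle action on $L^2(M)$.
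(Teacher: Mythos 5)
Your proof is correct, and it rests on the same geometric pivot as the paper's: parallel transport of a unit vector around a loop is a path inside a single leaf of $\mathcal{F}_H$ whose endpoint is the rotate of the starting vector by the holonomy angle, so leaf-constancy forces invariance under the holonomy rotations. Where you diverge is in how you upgrade this to invariance under the full circle action. The paper decomposes $u=\sum_n u_n$ into $\Theta$-eigenmodes (using that the projections $\pi_n$ interact correctly with $\partial^\pm$, so each $u_n$ is again leaf-constant) and then kills each nonzero mode from the pointwise relation $e^{2\pi i n\theta}u_n=u_n$ with $\theta$ irrational. You instead observe that the stabilizer of $f$ under the strongly continuous unitary circle action is a closed subgroup of $S^1$ containing the dense holonomy group, hence is all of $S^1$. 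Your route is arguably cleaner: it bypasses the Fourier decomposition and the auxiliary lemmas on $\pi_n$ and on the creation/annihilation property of the Cauchy--Riemann operators, and it is more careful than the paper about the measure-theoretic meaning of ``constant along almost every leaf'' (the paper evaluates the $L^2$ functions $u_n$ pointwise along the lifted loop without comment). The final descent step --- a $\Theta$-invariant, leaf-constant function is the pullback of a locally constant function on the connected surface $S\setminus\Sigma$ --- is identical in both arguments. One small wording nit: you do not need to ``redefine $f$ on a set of measure zero''; for a fixed representative constant on a conull union of leaves, the identity $f\circ\phi^\Theta_\alpha=f$ already holds almost everywhere.
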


Following the work of F.~Valdez \cite{VALDEZ_2009a}, Theorem 
\ref{thm:hol_fol_erg} has an application to the dynamics of
a class of holomorphic homogeneous foliations on $\C^2$.
In fact, the work \cite{VALDEZ_2009a} introduces a dictionary
between holomorphic homogeneous foliations on $\C^2$ and billiards in polygons. In particular, let $\mathcal F$ 
 denote the generic leaf of a homogeneous foliation  of $\C^2$ within the class considered in \cite{VALDEZ_2009a}
(see formulas $(1)$ for the foliations corresponding to triangles and $(8)$ for those corresponding to general 
polygons). Let $\mathcal G$ the projection of $\mathcal F$
onto the projective space $\mathbb P^3(\R)$ under the natural
projection $\C^2\setminus \{0\} \equiv \R^4\setminus \{0\} \to 
\mathbb P^3(\R)$. It is proved in \cite[Prop.3]{VALDEZ_2009a}
that every non-singular leaf of $\mathcal G$ is either closed
or dense (in fact, it is dense if at least one of the parameters of the foliation $\mathcal F$ is irrational). As a consequence
of Theorem \ref{thm:hol_fol_erg} we immediately have

\begin{corollary} Let $\mathcal F$ be a holomorphic homogeneous foliation on $\C^2$ in the class considered in \cite{VALDEZ_2009a} and let $\mathcal G$ its projection on
$\mathbb P^3(\R)$. The generic leaf of the foliation 
$\mathcal G$ is either closed (in the completely rational case) or equidistributed (in the non-rational case).
\end{corollary}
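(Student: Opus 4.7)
The plan is to combine the dictionary of F.~Valdez \cite{VALDEZ_2009a} with Theorem~\ref{thm:hol_fol_erg}. The starting point is the correspondence established in \cite{VALDEZ_2009a}: each foliation $\mathcal F$ in the considered class is associated, through the Katok--Zemljakov unfolding of the corresponding polygonal billiard, to a compact flat surface $(S,R)$ with finitely many cone points whose angles are determined by the parameters of $\mathcal F$. Under this correspondence, the projection $\mathcal G$ on $\mathbb P^3(\R)$ is identified with a foliation on $M=T_1(S\setminus\Sigma)$ whose leaves are, up to a finite-to-one map, the leaves of the horizontal holonomy foliation $\mathcal F_H$.

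Next I would split into the two cases. In the completely rational case, every cone angle is a rational multiple of $\pi$, so $(S,R)$ is (a finite cover of) a translation surface, every leaf of $\mathcal F_H$ is compact, and Valdez's Proposition~3 gives that the corresponding leaves of $\mathcal G$ are closed. In the non-rational case at least one parameter of $\mathcal F$ is irrational, which under the dictionary forces at least one cone angle to be an irrational multiple of $\pi$; hence $(S,R)$ has non-rational holonomy and Theorem~\ref{thm:hol_fol_erg} applies: $\mathcal F_H$ is ergodic with respect to the volume $\text{vol}_M=\hat X\wedge\hat Y\wedge\hat\Theta$.

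The remaining step is to upgrade measure-theoretic ergodicity of $\mathcal F_H$ to equidistribution of the generic leaf. Since $X$ and $Y$ commute, preserve $\text{vol}_M$, and span the distribution tangent to $\mathcal F_H$, this foliation is, off the countable union of hypersurfaces where $\phi^X_\R$ and $\phi^Y_\R$ are not defined, the orbit foliation of a locally free measure-preserving action of $\R^2$. Applying the pointwise ergodic theorem for amenable $\R^2$-actions along a Følner exhaustion of the leaves by flat rectangles in leafwise coordinates, ergodicity implies that for $\text{vol}_M$-almost every leaf $L$ the leaf-wise space averages of any continuous test function converge to its $\text{vol}_M$-average. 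Transporting back via Valdez's dictionary yields equidistribution of the generic leaf of $\mathcal G$ in $\mathbb P^3(\R)$ with respect to the induced volume.

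The main obstacle is this last step: individual leaves of $\mathcal F_H$ are non-compact with Loch Ness monster topology, and $X$, $Y$ are defined only off a countable union of singular hypersurfaces, so an appropriate Følner exhaustion that avoids or controls these singularities must be chosen for the ergodic theorem to apply leafwise. Because the singular set has zero volume and meets almost every leaf in a subset of planar measure zero, this is a technical rather than a conceptual difficulty, but it is the point that requires the most care in the argument.
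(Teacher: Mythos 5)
Your proposal follows the same route as the paper: the paper offers no written proof beyond invoking Valdez's dictionary together with Theorem \ref{thm:hol_fol_erg}, which are exactly the two ingredients you identify, and your Følner-average argument is the standard way to make the implicit ``ergodicity implies equidistribution of the generic leaf'' step precise. Your closing caveat about the singular hypersurfaces and the non-compact leaves is well placed, but it does not change the approach, which is essentially that of the paper.
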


\bigskip
We introduce a simultaneous Diophantine condition on the holonomy of a flat surface. Let $g_1, \dots, g_d \in \pi_1(S\setminus \Sigma)$ be generators of
the fundamental group of the punctured surface $S\setminus \Sigma$. Let 
$2\pi \alpha_1, \dots, 2\pi \alpha_d$ denote the angles of rotations of the circle given by parallel transport along paths representing $g_1, \dots, g_d$
(which are well defined since the metric is flat).
In particular, if the surface $S$ has genus $0$ then the fundamental group $\pi_1(S\setminus \Sigma)$ is generated by loops around the cone points. 
Let then $2 \pi (\alpha_1+1, \dots, \alpha_\sigma+1)$ denote the angles at the cone points $\{p_1, \dots, p_\sigma\}$.  We recall that the vector $\alpha=(\alpha_1, \dots, \alpha_\sigma)$ is related to the genus $g(S)$ of the surface by the condition
$$
\sum_{i=1}^\sigma \alpha_i = 2 g(S)-2\,.
$$
We note that the vector of rotation angles of the holonomy maps corresponding to loops around the cone points is precisely 
 $2 \pi (\alpha_1+1, \dots, \alpha_\sigma+1)$.

\medskip 
We introduce the following Diophantine condition on the holonomy of a flat surface with finitely many conical singularities.

\begin{definition}  
\label{def:Dioph_cond}
The holonomy representation $\pi_1(S\setminus \Sigma) \to S^1$ of a flat surface $(S,R)$ with conical singularities at $\Sigma$ satisfies a simultaneous
Diophantine condition of exponent $\gamma \geq 1/d$ if there exists a set of generators of $\pi_1(S\setminus \Sigma)$ such that the corresponding vector $2\pi (\theta_1, \dots, \theta_d)$ of rotation angles is simultaneously Diophantine of exponent $\gamma \geq 1/d$ in the following sense. Let $d: \R^2 \to \R^+ \cup\{0\}$ denote the euclidean distance. There exists a constant $C>0$ such that, for all $n\in \Z \setminus \{0\}$,
\begin{equation}
d(n \theta_1, \mathbb{Z}) + \cdots + d(n \theta_d, \mathbb{Z}) \geq \frac{C}{n^{\gamma}} \,.
\label{sdioph}
\end{equation}
\end{definition} 

A crucial step in the analysis of the cohomological equation for translation
flows is the following cohomology-free result for the partial (foliated) 
Laplacian 
$$
H = -(X^2 + Y^2).
$$
\begin{theorem}
\label{thm:Hor_Lapl}
Suppose the holonomy of the flat surface $S$ with conical singularities at $\Sigma$ is simultaneously Diophantine of exponent $\gamma >0$.  For function $f$ on $M$ with $L^2$ bounded $\Theta$-derivatives up to order $t>\gamma+1$ and with $\int_M f d\text{vol}_M = 0$, there is an $L^2$ solution $u$ to the cohomological equation 
\[
Hu = f.
\]
\end{theorem}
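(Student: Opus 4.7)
The plan is to exploit the $S^1$ symmetry generated by $\Theta$. Since $\phi^\Theta_\R$ acts smoothly with compact (circular) orbits on $M$, $L^2(M)$ decomposes into joint $\Theta$-eigenspaces
\[
L^2(M) \;=\; \bigoplus_{n \in \Z} \mathcal{H}_n , \qquad \mathcal{H}_n := \{\, f \in L^2(M) : \Theta f = i n f \,\} ,
\]
and the $\Theta$-regularity hypothesis on $f$ reads, via Plancherel, $\sum_n (1+|n|)^{2t}\,\|f_n\|_{L^2}^2 < \infty$. A direct computation from $[\Theta,X]=Y$, $[\Theta,Y]=-X$ and $[X,Y]=0$ yields $[\Theta,H]=0$, so $H$ preserves each $\mathcal{H}_n$. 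Writing $H_n := H|_{\mathcal{H}_n}$, the cohomological equation $Hu=f$ decouples as a family of equations $H_n u_n = f_n$ in each Fourier mode, and the problem reduces to solving these uniformly with controlled $L^2$ growth in $n$.

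The core of the proof is a quantitative lower bound on $\lambda_1(H_n)$, the bottom of the spectrum of $H_n$ on the orthogonal complement of its kernel. I would proceed by identifying $\mathcal{H}_n$ with the $L^2$ sections of a hermitian line bundle $L_n \to S \setminus \Sigma$ whose monodromy along a loop representing $g \in \pi_1(S\setminus \Sigma)$ equals $\exp(i n\, \theta(g))$, where $\theta(g)$ is the rotation angle of the holonomy at $g$. Under this identification $H_n$ is, up to lower-order terms, the Bochner Laplacian on $L_n$. For $n=0$, $H_0$ is the ordinary Laplacian on the flat surface and the zero-mean condition together with a standard Cheeger inequality yields the gap. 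For $n\neq 0$, Theorem \ref{thm:hol_fol_erg} rules out $L^2$ parallel sections of $L_n$, so $\ker H_n = \{0\}$, and a Cheeger-type argument in the spirit of \cite{Forni1997, Forni2022Twisted} lower-bounds $\lambda_1(H_n)$ by a power of the twisted isoperimetric constant of $L_n$. The simultaneous Diophantine condition \eqref{sdioph} is precisely the input that forces at least one of the monodromies $e^{2\pi i n \alpha_j}$ to stay polynomially far from $1$ uniformly in $n$, which converts into a bound of the shape
\[
\lambda_1(H_n) \;\geq\; C\,(1+|n|)^{-(\gamma+1)} .
\]
The hard part is exactly this step: translating the arithmetic inequality \eqref{sdioph} into a geometric, polynomial-in-$|n|$ spectral bound on a non-compact foliation whose generic leaves carry the topology of a Loch Ness monster. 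The non-compactness forces the Cheeger argument to be organized uniformly across the foliation rather than on a fixed compact domain, and the exact tracking of powers of $n$ is what dictates the regularity threshold $t>\gamma+1$.

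Granted the spectral gap, the remainder is Plancherel bookkeeping. The zero-mean hypothesis on $f$ transfers to $f_0$ (the components with $n \neq 0$ integrate to zero automatically by $\Theta$-invariance of $\mathrm{vol}_M$), so each equation $H_n u_n = f_n$ admits a unique solution in $(\ker H_n)^\perp$ with
\[
\|u_n\|_{L^2} \;\leq\; \lambda_1(H_n)^{-1}\,\|f_n\|_{L^2} \;\leq\; C^{-1}\,(1+|n|)^{\gamma+1}\,\|f_n\|_{L^2} .
\]
The series $u := \sum_n u_n$ then converges in $L^2(M)$ exactly when $\sum_n (1+|n|)^{2(\gamma+1)}\,\|f_n\|_{L^2}^2 < \infty$, which is guaranteed by the hypothesis $t > \gamma + 1$ on the $\Theta$-Sobolev regularity of $f$, and the resulting $u$ solves $Hu=f$ in $L^2(M)$.
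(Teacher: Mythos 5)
Your overall strategy is the same as the paper's: decompose $L^2(M)$ into $\Theta$-eigenspaces, note that $H$ commutes with $\Theta$ and so preserves each mode, extract a polynomial-in-$n$ lower bound on the bottom of the spectrum of $H$ on each nonzero mode from the Diophantine condition via a Cheeger-type isoperimetric estimate, and conclude by Plancherel summation. Your reformulation of the modes as $L^2$ sections of flat hermitian line bundles $L_n$ over $S\setminus\Sigma$, with $H$ acting as a Bochner Laplacian, is a legitimate (and arguably cleaner) variant of the paper's setup, which instead works on the $3$-manifold $M$ with the regularized elliptic operators $\Delta_\epsilon=H-\epsilon^2\Theta^2$, runs Cheeger's argument over separating hypersurfaces that are $n$-covers of $S\setminus\Sigma$, and passes to the limit $\epsilon\to 0$. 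Your appeal to ergodicity to rule out $L^2$ parallel sections is correct but becomes redundant once a quantitative gap is available, and your concern about running the Cheeger argument on the non-compact leaves is misplaced: the isoperimetric problem lives on the compact base (equivalently on $M$), not on individual leaves of $\mathcal F_H$.

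The genuine gap is that the step you yourself single out as the hard part --- converting the arithmetic inequality \eqref{sdioph} into a spectral bound --- is asserted rather than carried out, and the exponent you assert is not the one the mechanism produces. The paper's route is: (i) the Cheeger inequality $\lambda_n(\epsilon)\geq (h_n(\epsilon)/2)^2$ (Theorem \ref{thm:Cheeger}); (ii) an annulus/monodromy computation (Theorem \ref{thm:Cheeger_const}) giving $h_n(\epsilon)\geq c\sum_i d_{\R}(n\theta_i,\Z)$, because the fiber angle of any separating $n$-cover jumps by $2\pi n\theta_i$ across a transversal to $\gamma_i$ and this forces a lower bound on its area; (iii) the Diophantine condition then yields $\lambda_n(H)\geq C n^{-2\gamma}$ --- exponent $2\gamma$, since Cheeger squares the isoperimetric constant, which condition \eqref{sdioph} bounds below by $Cn^{-\gamma}$. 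With that bound, the Plancherel bookkeeping requires $\sum_n(1+|n|)^{4\gamma}\Vert f_n\Vert^2<\infty$, i.e. $\Theta$-regularity of order $t\geq 2\gamma$ (this is exactly the hypothesis of the paper's Theorem \ref{thm:H_apriori_est}), not $t>\gamma+1$. Your claimed gap $\lambda_1(H_n)\geq C(1+|n|)^{-(\gamma+1)}$ appears to be reverse-engineered from the regularity threshold in the statement and does not follow from the Cheeger argument you invoke; you must either prove that stronger spectral bound by some other means or adjust the regularity requirement to $t\geq 2\gamma$.
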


We recall that the \emph{cohomological equation} for a flow on a manifold $M$ can be written in terms
of its generator $X$ as the linear PDE
$$
Xu =f \,.
$$
It is immediate that \emph{$X$-invariant distributions}, that is, distributional
solutions $D \in \mathcal E'(M)$ of the equation $X D =0$, are obstructions to the existence of smooth solutions $u$ of the cohomological equation. In fact,
if for a given smooth function $f$ on $M$, there exists a function $u$ smooth on $M$ such that $Xu=f$, then for every $X$-invariant distributions $D$ we have
$$
D(f) = D(Xu) = (XD)(u) =0 \,.
$$
The \emph{stability} of a flow (in the sense of A.~Katok \cite{Katok99}, \cite{Katok03}) means that $X$-invariant distributions are a complete set of obstructions for the existence of a smooth
solution of the cohomological equation with smooth data.

The main results of this paper establish the stability of the geodesic flow of a flat surface with simultaneously Diophantine holonomy, as well as the existence of an infinite dimensional space of distributional obstruction, given by invariant distributions. These results (see Theorems \ref{thm:CE_dist_sol}, \ref{thm:CE_smooth_sol} and Corollary \ref{cor:CE_inv_dist}) can be summarized as follows:

\begin{theorem}
\label{thm:Intro_Cohom}
For any $\gamma >0$ there exists $t_0:=t_0(\gamma)$ such that following holds.
Suppose that the holonomy of $S$ satisfies a simultaneous Diophantine condition of exponent $\gamma >0$. Then 
\begin{itemize}
\item for any function $f$ of zero average with square integrable derivatives up to order $t>t_0$, there exists a distributional solution $u$ of the cohomological equation $Xu=f$;
\item if in addition $f$ belongs to the kernel of all $X$-invariant
distributions, then there exists a solution $u$ which is square integrable with all of its derivatives up to order $t-t_0$;
\item The space of all $X$-invariant distributions of Sobolev order $t_0$ has countable
dimension.
\end{itemize}
\end{theorem}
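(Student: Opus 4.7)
The plan is to solve $Xu = f$ by Fourier decomposition in the $\Theta$-variable followed by mode-by-mode inversion of twisted raising and lowering operators, with Theorem \ref{thm:Hor_Lapl} as the analytic input. Introduce $\eta_\pm = X \pm iY$; the commutation relations yield $[\Theta, \eta_\pm] = \mp i \eta_\pm$ and $\eta_+ \eta_- = \eta_- \eta_+ = -H$, so that $\eta_+$ lowers and $\eta_-$ raises the $\Theta$-weight by one. Writing $f = \sum_{n \in \Z} f_n$ with $\Theta f_n = i n f_n$, and similarly for $u$, the cohomological equation reduces to the coupled system
\[
\tfrac{1}{2}\bigl(\eta_+ u_{n+1} + \eta_- u_{n-1}\bigr) = f_n, \qquad n \in \Z.
\]
Each $f_n$ is a section over $S \setminus \Sigma$ of the complex line bundle carrying the $n$-th power of the holonomy character, so the $n$-dependence of the twist is governed by the simultaneous Diophantine condition in Definition \ref{def:Dioph_cond}.

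For the first bullet, I would construct distributional solutions using the formal identity $\eta_\pm^{-1} = -H^{-1} \eta_\mp$, which makes sense because $H$ commutes with $\eta_\pm$ and, by Theorem \ref{thm:Hor_Lapl}, is invertible on the zero-average part of each mode. A Hodge-type ansatz roughly of the form $u_n = -H^{-1}(\eta_- f_{n-1} + \eta_+ f_{n+1})$, corrected iteratively to cancel the error terms $H^{-1}(\eta_+^2 f_{n+2} + \eta_-^2 f_{n-2})$ that appear when the ansatz is substituted back, gives a formal solution whose $n=0$ compatibility is precisely $\int_M f \, d\text{vol}_M = 0$. Sobolev bookkeeping that combines the loss of $\gamma + 1$ $\Theta$-derivatives from Theorem \ref{thm:Hor_Lapl} with the polynomial growth in $|n|$ forced by the Diophantine condition then shows that the series $\sum_n u_n$ converges in $\mathcal{E}'(M)$ whenever $t$ exceeds an explicit threshold $t_0 = t_0(\gamma)$.

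The $X$-invariant distributions arise as the obstructions to this construction: a distribution $D$ is $X$-invariant iff $\eta_+ D + \eta_- D = 0$ weakly, which in $\Theta$-modes is a two-term recurrence for the components $D_n$. The cokernels of the twisted $\eta_\pm$ operators on the non-compact horizontal leaves, interpreted as \emph{boundary values at infinity}, supply a countable family of candidate obstructions; I would show this family has Sobolev order at most $t_0$ and that its total annihilator in $W^t(M)$ coincides with the range of $X$. The smooth solution under orthogonality (second bullet) then follows from a closed-range theorem applied to $X\colon W^t(M) \to W^{t-t_0}(M)$, invoking the hypoellipticity of $H$ on each mode noted in the introduction, and the countability assertion (third bullet) is immediate from the mode-by-mode construction.

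The principal difficulty is the characterization of the $X$-invariant distributions. In the translation-surface setting of \cite{Forni1997} and \cite{Forni2022Twisted}, the horizontal leaves are compact and a finite-dimensional Hodge theory carves out a finite-dimensional obstruction space per $\Theta$-mode. Here the leaves are of Loch Ness monster type, so each mode contributes an infinite-dimensional cokernel, and the Diophantine condition must be pushed to control the Sobolev order of individual obstructions uniformly in $n$ and to guarantee that the obstructions assembled across modes form a \emph{complete}, and not merely sufficient, family. Proving this completeness — which is what upgrades the existence of distributional solutions to the stability of the flat geodesic flow in the sense of Katok — is the delicate analytic core of the argument.
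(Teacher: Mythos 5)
Your setup matches the paper's: decompose into $\Theta$-eigenspaces $E_n$, introduce $\partial^\pm = X\pm iY$, and reduce $Xu=f$ to the coupled recursion $\tfrac12(\partial^+u_{n+1}+\partial^-u_{n-1})=f_n$ with $\partial^+\partial^- = -H$. The divergence is in how you propose to solve it. Your ansatz $u^{(0)}_n = -H^{-1}(\partial^- f_{n-1}+\partial^+ f_{n+1})$ with iterative correction of the residual $-\tfrac12 H^{-1}\big((\partial^+)^2 f_{n+2}+(\partial^-)^2 f_{n-2}\big)$ is a Neumann series for an operator built from $H^{-1}(\partial^\pm)^2$ at shifted $\Theta$-levels, and this series does not converge. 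After $K$ corrections the error at level $n$ involves products of $H^{-1}$ restricted to $E_{m_0},\dots,E_{m_K}$ along hopping paths $m_0=n$, $m_{j+1}=m_j\pm 2$. Since the Cheeger--Diophantine bound of Theorem \ref{thm:sdestimate} gives $\|H^{-1}|_{E_m}\|\sim |m|^{2\gamma}$, the outward paths contribute a factor $\prod_{j=0}^{K}|n+2j|^{2\gamma}$, which grows super-exponentially in $K$ and overwhelms any polynomial decay of $\|f_m\|_{L^2}$ in the fiber variable. No finite $\Theta$-Sobolev regularity of $f$ saves this, so the proposed formal solution does not land in any $H^{r,s}(M)$. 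The ``Sobolev bookkeeping'' you gesture at is exactly where the argument breaks.

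The paper avoids the multiplicative blow-up by proving an a priori estimate rather than constructing an inverse, and the key tool your proposal does not use is Lemma \ref{lemma:CR_identity}: $\|\partial^+v\|_{L^2}=\|\partial^-v\|_{L^2}$ for $v\in H^{1,0}(M)$. Fed into the mode-by-mode equation this yields the \emph{additive} inequality $\|\partial^-v_{n-1}\|\le 2\|f_n\|+\|\partial^-v_{n+1}\|$, which telescopes to $\|\partial^-v_{n-1}\|\le 2\sum_{k\ge0}\|f_{n+2k}\|$ once the tail is eliminated using $\partial^-v\in L^2$ (Lemma \ref{lemma:CR_apriori}); $H^{-1}$ is then applied exactly once to pass from $Hv$ to $v$ (Theorem \ref{thm:aprioriest_smooth}). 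Existence of distributional solutions follows by duality and Hahn--Banach (Theorem \ref{thm:CE_dist_sol}); smooth solutions for data in the joint kernel of invariant distributions follow from a Cauchy-sequence argument on the closure of the range of $X$ (Theorem \ref{thm:CE_smooth_sol}), where completeness of the family of obstructions is itself a Hahn--Banach fact, not something one needs to verify by hand. Finally, the countability of $\mathcal I^{r,s}_X(M)$ is not ``immediate from the mode-by-mode construction'': the paper builds invariant distributions from meromorphic and anti-meromorphic data via $\mathcal D^\pm(m^\pm)=\partial^\pm U^\pm$ with $XU^\pm=m^\pm$ (Theorem \ref{thm:Dpm}), and the count rests on a Riemann--Roch argument for $n$-differentials with prescribed zero/pole orders at the cone points (Theorem \ref{thm:meromorphic}, Corollary \ref{cor:mero_sq_int}, using \cite{BCGGM}). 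Your ``cokernels at infinity on Loch Ness leaves'' picture is not made precise and does not correspond to anything in the paper's argument.
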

\section{Structure of the Paper}
Our approach to proving existence of distributional 
and smooth solutions of the cohomological equation
is based on {\it a priori} estimates (see Theorem \ref{thm:aprioriest_smooth}). These in turn
are based on the harmonic analysis on flat surfaces
recalled in section \ref{sec:GA_flat}, in particular 
Lemma \ref{lemma:CR_identity}, and on a priori bounds
for the partial (horizontal) Laplacian (see Theorem~\ref{thm:H_apriori_est}). The analysis of the horizontal
Laplacian is based on lower bounds
on eigenvalues of the partial (horizontal) Laplacian,
derived from a Cheeger-type estimate (see Theorem 
\ref{thm:sdestimate}). 

\smallskip
The paper is organized as follows. In section \ref{sec:GA_flat}
we establish basic results on geometry and analysis on flat surfaces with conical points. In particular we introduce convenient Sobolev spaces (which distinguish between horizontal and vertical smoothness) in subsection \ref{ssec:Sobolev}, 
and recall the  creation/annihilation properties of foliated Cauchy-Riemann operators in subsection~\ref{ssec:CR}.

In section \ref{sec:horizontal} we prove results on the dynamics and analytical properties of the horizontal (holonomy) $2$-dimensional foliation. In subsection~\ref{ssec:hor_ergodicity}
we prove that the horizontal foliation is ergodic. 
The rest of the section is devoted to the proof of results
on the analysis of the horizontal foliated Laplacian. We prove
in subsection \ref{ssec:Cheeger} that the horizontal Laplacian is globally hypoelliptic (cohomology free). This result is based 
on Cheger-type lower bounds on its eigenvalues. 
Finally, in section \ref{sec:CE} we complete the analysis of the cohomological equation for the geodeic flow. In subsection \ref{ssec:solutions} we give a proof of existence of distributional and smooth solutions and in subsection \ref{ssec:Inv_dist} we describe the structure of the space
of distributional obtructions to the existence of smooth solutions (invariant distributions).

\section{Geometry and Analysis on Flat Surfaces}
\label{sec:GA_flat}
Let $(S,R)$ denote a flat surface $S$ with a finite set $\Sigma\subset S$ of conical singularities. 

\subsection{Local homogeneous structure}
Let $\{X, Y, \Theta\}$ denote the frame composed of the generator $X$ of the \emph{geodesic flow}, of the generator $Y$ of the \emph{orthogonal geodesic flow} and of the generator $\Theta$ of the \emph{rotations in the fibers} of $M=T_1(S\setminus \Sigma)$ with respect to the flat metric $R$ on $S$.
We recall the commutation relations
\begin{equation}
\label{eq:comm_rel_main}
[X,Y]=0\,, \quad [\Theta, X]= Y\,, \quad [\Theta, Y]=-X\,.
\end{equation}
Important auxiliary operators are the foliated \emph{Cauchy-Riemann operators}
$\partial^\pm = X \pm i Y$. By the main commutation relation we derive the following commutation identities for the Cauchy-Riemann operators
\begin{equation}
\label{eq:comm_rel_CR}
[\partial^+, \partial^-]=0\,, \quad  [\Theta, \partial^\pm] = \mp i \partial^\pm\,.
\end{equation}

\subsection{A Fourier decomposition}
Since $\Theta$ integrates to a circle action on $M$, it induces a splitting
of the space $L^2(M,\text{vol}_M)$:

For all $k\in \Z$, let  
\begin{equation}
E_k= \{f \in L^2(M, \text{vol}_M) : f\circ \phi^\Theta_\theta  =  e^{i k \theta} f \,, \text{ for all } \theta\in \mathbb {R} \}.  
\end{equation}
\begin{lemma} The following orthogonal splitting holds:
$$
L^2(M, \text{vol}_M) = \bigoplus_{k\in \Z} E_k\,.
$$
\label{fouriertangent}
\end{lemma}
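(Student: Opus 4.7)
The plan is to recognize the decomposition as the standard Peter--Weyl/Fourier decomposition for the natural unitary $S^1$-representation on $L^2(M,\text{vol}_M)$ induced by the rotation flow $\phi^\Theta_\R$.

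First, I would note that $\phi^\Theta_\R$ descends to an action of the circle group $S^1 = \R/2\pi\Z$ on $M$: it is fiberwise rotation in the unit tangent bundle $T_1(S\setminus\Sigma)$, hence $2\pi$-periodic. Since $\mathcal L_\Theta \text{vol}_M = 0$, the pull-back operators $U_\theta f := f\circ \phi^\Theta_\theta$ are unitary on $L^2(M,\text{vol}_M)$, and $\theta\mapsto U_\theta$ is a strongly continuous unitary representation of $S^1$ (strong continuity being a consequence of density of continuous compactly supported functions together with the uniform continuity of the action on such functions). By construction, $E_k$ is precisely the closed subspace on which $U_\theta$ acts as the character $e^{ik\theta}$.

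For orthogonality of the summands, if $f\in E_k$ and $g\in E_j$ with $k\neq j$, unitarity of $U_\theta$ yields
$$
\langle f,g\rangle \;=\; \langle U_\theta f, U_\theta g\rangle \;=\; e^{i(k-j)\theta}\langle f,g\rangle \qquad \text{for every }\theta\in\R,
$$
which forces $\langle f,g\rangle = 0$. For completeness of the direct sum, I would introduce the bounded operators
$$
P_k f \;:=\; \frac{1}{2\pi}\int_0^{2\pi} e^{-ik\theta}\, U_\theta f\, d\theta,
$$
understood as Bochner integrals in $L^2(M,\text{vol}_M)$. A change of variables shows $U_\phi P_k f = e^{ik\phi} P_k f$, so $P_k$ projects into $E_k$, and a direct computation shows $P_k$ is self-adjoint and idempotent, hence the orthogonal projection onto $E_k$. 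That $\sum_{k\in\Z} P_k f = f$ in $L^2$ then follows by applying the Fej\'er summation theorem to the continuous $L^2$-valued function $F(\theta) := U_\theta f$ on $S^1$: indeed $P_k f$ is the $k$-th Fourier coefficient of $F$, and the Fej\'er means evaluated at $\theta = 0$ give $\sum_{|k|\leq N}(1-|k|/(N+1))\,P_k f \to F(0) = f$ in $L^2$.

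There is no substantive obstacle; this is the classical assertion that an $L^2$-space carrying a measure-preserving $S^1$-action decomposes as the Hilbert direct sum of its isotypic components. The only point requiring a line of care is the strong continuity of $\theta\mapsto U_\theta$, which is needed to run the Bochner-integral/Fej\'er argument, and this is standard.
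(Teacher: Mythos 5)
Your proof is correct and follows essentially the same approach as the paper: define the isotypic projections $\pi_k$ (your $P_k$) by averaging against the characters $e^{-ik\theta}$, verify they are orthogonal idempotents landing in $E_k$, and establish completeness via Fourier analysis along the $\phi^\Theta$-orbits. The only difference is a minor technical one in the completeness step --- the paper appeals to pointwise Fourier expansion along a.e.\ orbit combined with Bessel's inequality, while you run the slightly cleaner Fej\'er-means argument for the $L^2$-valued function $\theta \mapsto U_\theta f$; both are standard ways to conclude.
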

\begin{proof}
We recall the argument for the convenience of the reader.
Let $\phi^{\Theta}_t$ be the flow given by rotation of a vector $v \in M:=T_1(S\setminus \Sigma)$.  For all $k\in \Z$, define the projection $\pi_k : L^2(M, \text{vol}_M) \to L^2(M, \text{vol}_M)$ by 
\begin{equation}
\pi_k(f)  = \frac{1}{2\pi} \int \limits_{0}^{2 \pi} e^{- i k \theta} (f \circ \phi^{\Theta}_\theta) d\theta \,.
\end{equation}
It can be verified that, for all $k\in \Z$, $\pi_k$ is the orthogonal projection onto the eigenspace $E_k$. In fact, for all $r>0$ by change of variables and periodicity, we have
$$
\begin{aligned}
\pi_k(f)\circ \phi^\Theta_r &= \frac{1}{2\pi}\int \limits_{0}^{2 \pi} e^{- i k \theta} (f \circ \phi^{\Theta}_{\theta+r}) d\theta \\ &= \frac{e^{i k r}}{2\pi}\int \limits_{r}^{2 \pi+r} e^{- i k \theta} (f \circ \phi^{\Theta}_\theta) d\theta = e^{ i k r} \pi_k(f)\,.
\end{aligned}
$$
In addition, it is immediate that, for all $k\in \Z$,
$$
\pi_k^2 (f) = \frac{1}{2\pi} \int \limits_{0}^{2 \pi} e^{-i k \theta} (\pi_k(f) \circ \phi^{\Theta}_{\theta}) d\theta = \frac{1}{2\pi} \int \limits_{0}^{2 \pi}  \pi_k(f) d\theta = \pi_k(f)\,.
$$
and that for Since for $h \not = k \in \Z$ 
$$
\pi_h \pi_k (f) = \frac{1}{2\pi} \int \limits_{0}^{2 \pi} e^{- i k \theta} (\pi_h(f) \circ \phi^{\Theta}_{\theta}) d\theta = \frac{1}{2\pi} \int \limits_{0}^{2 \pi} e^{- i (k-h) \theta} f d\theta = 0\,.
$$
Since by orthogonality
$$
\sum_{k\in \Z} \Vert \pi_k(f) \Vert^2_{L^2(M, \text{vol}_M)} \leq 
 \Vert f\Vert^2_{L^2(M, \text{vol}_M)}
$$
and by Fourier series expansion for periodic functions on the real line we have that, for all $f\in L^2(M, \text{vol}_M)$ and for almost all $(x,\xi) \in M$,
$$
f(x,\xi) = \sum_{k\in \Z} \pi_k(f) (x,\xi)\,,
$$
we conclude that $\sum \limits_{k \in \mathbb{Z}} \pi_k = \text{Id}_{L^2(M, \text{vol}_M)}$.  
\end{proof}

\subsection{Local coordinates}

The smooth structure on $M$ induced by the flat metric with conical singularities can be understood in local coordinates. 
At a regular point there exists a (locally defined) complex canonical coordinate $z=x+iy $ on $S$ such that the metric $R$ is euclidean:
$$
R = \left(dx^2 + dy^2 \right)^{\frac{1}{2}}\,.
$$
As a consequence, the unit tangent bundle is locally trivial with local coordinates $(z, \theta)$ and we have the following local formulas
$$
X = \cos \theta \frac{\partial}{\partial x} + \sin \theta \frac{\partial}{\partial y} \,, \quad Y = -\sin \theta \frac{\partial}{\partial x} + \cos \theta \frac{\partial}{\partial y} \,, \quad  \Theta= \frac{\partial}{\partial \theta}
$$
with invariant volume form $\text{vol}_M = dx \wedge dy \wedge d\theta$.

At a conical point of angle $2\pi (\alpha +1)$, there exists a (locally defined) complex canonical coordinate $z=x+iy $ on $S$ such that the metric $R$
has the expression 
\begin{equation}
\label{conepointmetric}
R = \vert z \vert^\alpha\left(dx^2 + dy^2 \right)^{\frac{1}{2}}.
\end{equation}
As a consequence, the unit tangent bundle is not locally trivial and has local coordinates $(z, \theta)$ with the following identification
\begin{equation}
\label{eq:cone_point_coord}
( e^{2\pi i} z, \theta) \equiv ( z, \theta + 2\pi \alpha)\,.
\end{equation}
With respect to the above coordinates we have the following local formulas 
$$
\partial ^+ = e^{-i\theta} \bar z^{-\alpha} \frac{\partial}{\partial {\bar z}}\,, \quad \partial ^- = e^{i\theta} z^{-\alpha} \frac{\partial}{\partial z}\,,  \quad \Theta = \frac{\partial}{\partial \theta}$$
and, as a consequence,
\begin{equation}
\label{eq:loc_cone}
\begin{aligned}
X &= \frac{ \partial ^++ \partial^-}{2} = \frac{1}{2} (e^{-i\theta} \bar z^{-\alpha} \frac{\partial}{\partial {\bar z}}+  e^{i\theta} z^{-\alpha} \frac{\partial}{\partial z}    )  \,, \\ Y &= \frac{ \partial ^+- \partial ^-}{2i} = \frac{1}{2i} (e^{-i\theta} \bar z^{-\alpha} \frac{\partial}{\partial {\bar z}}-  e^{i\theta} z^{-\alpha} \frac{\partial}{\partial z})\,, 
\end{aligned}
\end{equation}
with invariant volume form $\text{vol}_M = \vert z \vert^{2\alpha} dx \wedge dy \wedge d\theta$.

We introduce Sobolev spaces on $M$ below.  First we introduce the space
of smooth functions on the unit tangent bundle of a flat surface with 
conical points.

\begin{definition}
\label{def:smooth_functions}
The natural space $C^\infty_\Sigma (M)$ of smooth functions on the unit tangent bundle of a surface $S$ with cone singularities at $\Sigma$ is defined as follows.  A function $f \in C^\infty_\Sigma (M)$ if it restricts to a smooth function of the canonical local coordinates $(z, \theta)$ in the unit tangent bundle of a neighborhood of any regular point and if it restricts to a smooth
function of the variables 
$$
(z, \bar{z}, e^{-i\theta} z^{\alpha}, e^{i\theta} \bar {z}^{\alpha})
$$
in the unit tangent bundle of a neighborhood of any conical point of angle
$2\pi (\alpha+1)$.
\end{definition}
The above definition is motivated by the condition that the space
$C^\infty_\Sigma(M)$ be the common domain of all iterates of the operators
$\partial^+$, $\partial^-$, $\Theta$ or equivalently $X$, $Y$, $\Theta$.

\subsection{Sobolev spaces}
\label{ssec:Sobolev}

For $(r, s) \in \mathbb{N} \times \mathbb{N}$, we define the norm 
\begin{equation}
\vert f \vert_{r,s} = \Big(\sum \limits_{i + j \leq r} \sum \limits_{l \leq s}\|X^iY^j \Theta^l f \|_{L^2(M, \text{vol}_M)}^2\Big)^{1/2}.
\label{sobnorm}
\end{equation}
We then define the $L^2$ Sobolev space $H^{r,s}(M)$ on $M$ as the completion with respect to the norm $\vert\cdot\vert_{r,s}$ of the space $C_{\Sigma}^{\infty}(M)$ of smooth functions. We also define $H^{-r,-s}(M)$ to be the dual space to $H^{r,s}(M)$. 

We note that  for all $r,s \in \Z$ there exists $C_{r,s} >0$ such that, for all  $f \in H^{r, s}(M)$ we have 
\begin{equation}
C_{r,s}^{-2} \vert f \vert_{r,s}^2\, \leq \,    \sum \limits_{n=0}^{\infty} (1+n^{2s}) \vert \pi_n(f) \vert_{r,0}^2  \,  \leq \,  C_{r,s}^2 \vert f \vert_{r,s}^2
\end{equation}

The Lie derivative operators $X$ and $Y$, as well as the Cauchy-Riemann
operators $\partial^\pm$ are defined on the common domain $H^{1,0}(M)$. 
Since the flows $\phi^X_{\R}$ and $\phi^Y_{\R}$ are
volume preserving, the Lie derivative operators $X$ and $Y$ are skew-symmetric on 
$H^{1,0}(M)$. Since the set of the  singular trajectories of $\phi^X_{\R}$ and $\phi^Y_{\R}$ are countable unions
of codimension one submanifolds and have therefore zero volume, the operators
$X$ and $Y$ are essentially skew-adjoint \cite{Ne59}. As for the Cauchy-Riemann operators,
it follows that the adjoint of $\partial^\pm$ on the common domain $H^{1,0}(M)$ is an extension of $-\partial^ \mp$ on the same domain. 

The Cauchy-Riemann operators satisfy the following crucial
identity.

\begin{lemma} 
\label{lemma:CR_identity}
For all $v\in H^{1,0}(M)$ we have
$$
\Vert \partial^+ v \Vert^2_{L^2(M, \text{vol}_M)} = 
\Vert \partial^- v \Vert^2_{L^2(M, \text{vol}_M)} = 
 \Vert X v \Vert^2_{L^2(M, \text{vol}_M)} + \Vert Y v \Vert^2_{L^2(M, \text{vol}_M)}\,.
$$
\end{lemma}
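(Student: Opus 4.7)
\medskip

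The plan is to expand $\|\partial^\pm v\|^2$ algebraically and reduce the identity to the vanishing of a single cross term, which will follow from integration by parts.

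First I would write, for $v$ in a suitable dense subspace (say $v \in C^\infty_\Sigma(M)$),
$$
\|\partial^\pm v\|^2_{L^2} = \langle Xv \pm iYv,\, Xv \pm iYv\rangle = \|Xv\|^2 + \|Yv\|^2 \;\mp\; 2\,\operatorname{Im}\langle Yv, Xv\rangle,
$$
using the sesquilinearity of the complex $L^2$ inner product and the fact that $\langle Yv,Xv\rangle - \langle Xv,Yv\rangle = 2i\operatorname{Im}\langle Yv,Xv\rangle$. The claimed identity is therefore equivalent to the two assertions $\|\partial^+v\|=\|\partial^-v\|$ and $\|\partial^+v\|^2=\|Xv\|^2+\|Yv\|^2$, both of which are the single statement that $\langle Yv, Xv\rangle \in \R$, i.e., that $\langle Yv,Xv\rangle = \langle Xv,Yv\rangle$.

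Next I would establish this reality by integration by parts. Since $X$ and $Y$ are essentially skew-adjoint on $C^\infty_\Sigma(M)$ (as noted in the text, their singular orbits fill a set of zero volume and the flows preserve $\mathrm{vol}_M$), for smooth $v$ we have
$$
\langle Yv, Xv\rangle = -\langle XYv, v\rangle \quad \text{and} \quad \langle Xv, Yv\rangle = -\langle YXv, v\rangle.
$$
The commutation relation $[X,Y]=0$ from \eqref{eq:comm_rel_main} gives $XYv=YXv$, so the two quantities coincide; hence $\langle Yv,Xv\rangle$ equals its own complex conjugate and is real. Substituting back into the expansion yields
$$
\|\partial^+v\|^2 = \|\partial^-v\|^2 = \|Xv\|^2 + \|Yv\|^2
$$
for every $v \in C^\infty_\Sigma(M)$.

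Finally, I would extend the identity to all $v \in H^{1,0}(M)$ by density: by definition $H^{1,0}(M)$ is the completion of $C^\infty_\Sigma(M)$ in the norm $|\cdot|_{1,0}$, and both sides of the identity are continuous with respect to this norm (since $\|\partial^\pm v\|_{L^2} \le \|Xv\|_{L^2}+\|Yv\|_{L^2}$). Approximating $v$ by a sequence $v_n \in C^\infty_\Sigma(M)$ and passing to the limit gives the full result.

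The only delicate point is the integration by parts step: one must know that $C^\infty_\Sigma(M)$ is genuinely a core for the closed operators $X$ and $Y$ on $H^{1,0}(M)$, so that no boundary terms from the cone points or from the singular trajectories obstruct the skew-symmetry. This is exactly the content of the essential skew-adjointness invoked (via \cite{Ne59}) immediately before the lemma statement, so the argument goes through without further work.
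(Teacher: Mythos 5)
Your proposal is correct and follows essentially the same route as the paper: expand $\Vert\partial^\pm v\Vert^2$ sesquilinearly, reduce to showing $\langle Yv,Xv\rangle=\langle Xv,Yv\rangle$ via skew-symmetry of $X$, $Y$ together with $[X,Y]=0$, and conclude by density from $C^\infty_\Sigma(M)$. Your closing remark about $C^\infty_\Sigma(M)$ being a core is exactly the point the paper addresses by invoking essential skew-adjointness just before the lemma.
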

\begin{proof} By the definition of the space $H^{1,0}(M)$, it
suffices to prove the statement for smooth functions $v\in C^\infty_\Sigma(M)$ and conclude by a density argument. 

For $v\in C^\infty_\Sigma(M)$, we have the commutation identity $XY v= YXv$, hence
$$
\langle Yv,Xv \rangle = -\langle XYv,v \rangle = -\langle YXv,v \rangle = \langle Xv,Yv \rangle\,.
$$
It then follows that
$$
\begin{aligned}
\Vert \partial^\pm v \Vert^2_{L^2(M, \text{vol}_M)} &=
\langle (X\pm iY)v, (X\pm iY)v \rangle \\ &= \langle Xv,Xv \rangle +
\langle Yv,Yv \rangle \pm i (\langle Yv,Xv \rangle -\langle Xv,Yv \rangle) \\ &= \Vert X v \Vert^2_{L^2(M, \text{vol}_M)} + \Vert Y v \Vert^2_{L^2(M, \text{vol}_M)} \,,
\end{aligned}
$$
as stated.
\end{proof}

Since the flow $\phi^\Theta_{\R}$ leaves invariant the plane generated by
$\{X,Y\}$, we derive the following result on the orthogonal projections onto the eigenspaces of $\phi^\Theta_{\R}$.

\begin{lemma} 
\label{lemma:pi_n}
For all $r \in \N$ and for each $n\in \Z$ the image of the 
Sobolev space $H^{r,0}(M)$ under the projection operator $\pi_n: L^2(M, \text{\rm vol}_M) \to E_n$ is contained in the space $H^{r,0}(M) \cap E_n$ and
we have the following estimates. For every $r\in \N$ there exists a constant $C_r>0$ such that for all $f \in H^{r,0}(M)$ we have
$$
\vert \pi_n(f) \vert_{r,0} \leq C_r \vert f \vert_{r,0} \,.
$$
\end{lemma}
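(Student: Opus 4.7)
The plan is to exploit the Cauchy-Riemann operators $\partial^\pm = X \pm iY$, which interact cleanly with both the $\Theta$-Fourier decomposition and the Sobolev norm structure. The direct route of commuting $X$ and $Y$ past $\pi_n$ fails: since $X$ and $Y$ do not commute with $\Theta$, a brief calculation using the push-forward $(\phi^\Theta_\theta)_* X = \cos\theta\,X - \sin\theta\,Y$ shows that $X\pi_n(f)$ lands in $E_{n-1}\oplus E_{n+1}$ rather than a single eigenspace, which obstructs a clean direct argument. The Cauchy-Riemann operators sidestep this defect.

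\textbf{Step 1 (intertwining with $\pi_n$).} From $[\Theta, \partial^\pm] = \mp i\partial^\pm$ in \eqref{eq:comm_rel_CR} one checks that $\partial^\pm$ maps $E_k$ into $E_{k\mp 1}$. Since $\partial^+$ and $\partial^-$ commute, the composition $(\partial^+)^a(\partial^-)^b$ maps $E_k$ into $E_{k-a+b}$, and extracting the appropriate Fourier mode yields the intertwining identity
\[
(\partial^+)^a(\partial^-)^b\,\pi_n(f) \;=\; \pi_{n-a+b}\!\left((\partial^+)^a(\partial^-)^b f\right).
\]
Because each $\pi_m$ is an orthogonal projection, it is a contraction on $L^2$, so
\[
\|(\partial^+)^a(\partial^-)^b\,\pi_n(f)\|_{L^2(M,\text{vol}_M)} \;\leq\; \|(\partial^+)^a(\partial^-)^b f\|_{L^2(M,\text{vol}_M)}.
\]

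\textbf{Step 2 (norm equivalence).} I would show that, on $H^{r,0}(M)$, the norm $|\cdot|_{r,0}$ is equivalent to $\bigl(\sum_{a+b\leq r}\|(\partial^+)^a(\partial^-)^b \cdot\|_{L^2}^2\bigr)^{1/2}$ with constants depending only on $r$. Since $X=(\partial^++\partial^-)/2$ and $Y=(\partial^+-\partial^-)/(2i)$, and since $\partial^+,\partial^-$ commute just as $X,Y$ do, each monomial $X^iY^j$ is a fixed constant-coefficient linear combination of monomials $(\partial^+)^a(\partial^-)^b$ with $a+b=i+j$, and conversely; the equivalence then follows from the triangle and Cauchy-Schwarz inequalities. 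At $r=1$ this amounts essentially to Lemma \ref{lemma:CR_identity}, and the general case is obtained by iteration.

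\textbf{Step 3 (combine).} Applying Step 2 on both ends and Step 1 in the middle,
\[
|\pi_n(f)|_{r,0}^2 \;\leq\; C_r' \!\!\sum_{a+b\leq r}\!\|(\partial^+)^a(\partial^-)^b \pi_n(f)\|_{L^2}^2 \;\leq\; C_r' \!\!\sum_{a+b\leq r}\!\|(\partial^+)^a(\partial^-)^b f\|_{L^2}^2 \;\leq\; C_r'\,C_r''\,|f|_{r,0}^2,
\]
which gives the stated estimate with $C_r := (C_r'C_r'')^{1/2}$, and simultaneously shows that $\pi_n(f)\in H^{r,0}(M)\cap E_n$.

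The only mildly technical point is the norm equivalence of Step 2, which requires some bookkeeping to verify that the passage between the two commuting pairs $\{X,Y\}$ and $\{\partial^+,\partial^-\}$ is bounded on each $H^{r,0}(M)$; it is not a serious obstacle. The conceptual content is that the Cauchy-Riemann operators diagonalize the $\Theta$-action on the horizontal derivative structure, which is precisely what makes them the right tool for commuting past $\pi_n$.
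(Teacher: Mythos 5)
Your proof is correct, but it takes a genuinely different route from the paper's, and your dismissal of the ``direct'' route is unfounded. The paper commutes $X$ and $Y$ directly past $\pi_n$: it differentiates under the integral $\pi_n(f)=\frac{1}{2\pi}\int_0^{2\pi}e^{-int}f\circ\phi^\Theta_t\,dt$ using the push-forward $(\phi^\Theta_t)_*X=(\cos t)X+(\sin t)Y$, $(\phi^\Theta_t)_*Y=(-\sin t)X+(\cos t)Y$, obtaining $X\pi_n(f)$ as an integral of rotated linear combinations of $Xf$ and $Yf$. This gives $|\pi_n(f)|_{1,0}\leq C_1|f|_{1,0}$ immediately; the argument never needs $X\pi_n(f)$ to lie in a single $\Theta$-eigenspace, only a bound, so the objection you raise in your opening paragraph is a straw man. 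That said, your alternative is clean and arguably handles the higher-order case more transparently than the paper's ``and so on''. The intertwining $(\partial^+)^a(\partial^-)^b\,\pi_n(f)=\pi_{n-a+b}\bigl((\partial^+)^a(\partial^-)^b f\bigr)$ is correct (it is the statement that the $\pi_m$ are the spectral projectors of $\Theta$ and the CR operators shift the $\Theta$-spectrum by $\mp i$), and the equivalence of the norm $|\cdot|_{r,0}$ with $\bigl(\sum_{a+b\leq r}\|(\partial^+)^a(\partial^-)^b\cdot\|_{L^2}^2\bigr)^{1/2}$ follows from the constant-coefficient change of basis between the two commuting pairs $\{X,Y\}$ and $\{\partial^+,\partial^-\}$. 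What your approach buys is that each monomial commutes exactly with the appropriately shifted projector, so the higher-$r$ estimate is a one-line consequence of $\pi_m$ being an $L^2$-contraction, rather than an induction on the integral formula; what the paper's approach buys is that it needs nothing beyond the definition of $\pi_n$ and the commutation relations, without invoking Lemma~\ref{lemma:CR_identity} or its higher-order analogue.
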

\begin{proof}
We recall that 
$$
\pi_n(f) = \frac{1}{2\pi} \int_0^{2\pi}  e^{-in t}  f \circ \phi^{\Theta}_t dt\,.
$$
Since $(\phi^\Theta_t)_\ast (X) = (\cos t) X + (\sin t) Y$ and $(\phi^\Theta_t)_\ast (Y) = (-\sin t) X + (\cos t) Y$ by the commutation relations, it follows that (in the weak sense)
$$
\begin{aligned}
X \pi_n(f) = \frac{1}{2\pi} \int_0^{2\pi}  e^{-in t}  [(\cos t) X + (\sin t) Y]f \circ \phi^{\Theta}_t dt\,, \\
Y \pi_n(f) = \frac{1}{2\pi} \int_0^{2\pi}  e^{-in t}  [(-\sin t) X + (\cos t) Y]f \circ \phi^{\Theta}_t dt\,.
\end{aligned}
$$
It follows that for $f \in H^{1,0}(M)$ we have that $\pi_n(f) \in H^{1,0}(M)$, for all $n\in \Z$, and there exists a constant $C_1>0$ such that, for all $f\in H^{1,0}(M)$, we have
$$
\vert  \pi_n(f) \vert_{1,0} \leq C_1 \vert f \vert_{1,0}\,.
$$
The statement can be proved similarly, and we leave the argument to the reader.
\end{proof}

\subsection{Creation and annihilation}
\label{ssec:CR}
By the commutation relations \eqref{eq:comm_rel_CR}, the Cauchy-Riemann
operators are operators of creation and annihilation for the spectral
decomposition of the rotation. In fact we have 

\begin{lemma}
\label{lemma:plus_minus}
For each $n\in \mathbb{Z}$, the Cauchy-Riemann operators $\partial^\pm : E_n \cap H^{1,0}(M) \to E_{n-1}$ and $\partial^- : E_n \cap H^{1,0}(M) \to E_{n+1}$ are well-defined and bounded. 
\end{lemma}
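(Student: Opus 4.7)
The plan is to deduce the creation/annihilation property from the commutation identity $[\Theta,\partial^{\pm}]=\mp i\,\partial^{\pm}$ recorded in \eqref{eq:comm_rel_CR}, and to deduce the operator bound directly from Lemma~\ref{lemma:CR_identity}.

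First I would verify the mapping property on the dense subspace $C^{\infty}_\Sigma(M)\cap E_n$, where the computation is essentially a one-liner: if $\Theta f = in f$ and $f$ is smooth, then
\begin{equation*}
\Theta(\partial^{\pm} f) = \partial^{\pm}(\Theta f) + [\Theta,\partial^{\pm}]f = i(n\mp 1)\,\partial^{\pm} f,
\end{equation*}
so $\partial^{\pm} f \in E_{n\mp 1}$. Equivalently, integrating $[\Theta,\partial^{\pm}]=\mp i\partial^{\pm}$ along the rotation flow yields the intertwining
\begin{equation*}
\partial^{\pm}(f\circ \phi^{\Theta}_\theta) = e^{\pm i\theta}\,(\partial^{\pm} f)\circ \phi^{\Theta}_\theta,
\end{equation*}
from which the eigenspace property on $E_n$ follows at once.

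To extend to $f\in E_n\cap H^{1,0}(M)$, I would use a density argument. By definition of $H^{1,0}(M)$ there exist $f_k\in C^{\infty}_\Sigma(M)$ with $f_k\to f$ in $|\cdot|_{1,0}$. Setting $g_k:=\pi_n(f_k)$ produces smooth functions (since $\pi_n$ is defined by integration against the smooth flow $\phi^{\Theta}_t$) lying in $E_n$; by Lemma~\ref{lemma:pi_n} together with $\pi_n f = f$, we have $g_k \to f$ in $|\cdot|_{1,0}$. Lemma~\ref{lemma:CR_identity} then yields
\begin{equation*}
\|\partial^{\pm}(g_k-f)\|_{L^2(M,\text{vol}_M)}^2 = \|X(g_k-f)\|_{L^2}^2 + \|Y(g_k-f)\|_{L^2}^2 \le |g_k-f|_{1,0}^2 \to 0,
\end{equation*}
so $\partial^{\pm} g_k\to \partial^{\pm} f$ in $L^2$. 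Since the smooth case gives $\partial^{\pm} g_k\in E_{n\mp 1}$ and $E_{n\mp 1}$ is closed in $L^2(M,\text{vol}_M)$, the limit $\partial^{\pm} f$ lies in $E_{n\mp 1}$. The same inequality establishes the bound $\|\partial^{\pm} f\|_{L^2}\le |f|_{1,0}$, proving boundedness.

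The only delicate point is ensuring the approximation is compatible both with the eigenspace condition and with continuity of $\partial^{\pm}$, which is exactly what Lemmas~\ref{lemma:pi_n} and~\ref{lemma:CR_identity} supply; given these, no substantive obstacle remains and the result follows from the Heisenberg-type commutation structure of $\{X,Y,\Theta\}$.
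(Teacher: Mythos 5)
Your proposal is correct and follows essentially the same route as the paper: the eigenspace shift comes from the commutation identity $[\Theta,\partial^{\pm}]=\mp i\,\partial^{\pm}$ and the bound from the definition of the $|\cdot|_{1,0}$ norm (equivalently Lemma~\ref{lemma:CR_identity}). The only difference is that you justify the weak-sense computation by an explicit density argument via $\pi_n$ and the closedness of $E_{n\mp1}$, which the paper compresses into the phrase ``taking the derivative with respect to $\Theta$ in weak sense''; this is a careful elaboration rather than a different approach.
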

\begin{proof}
Let $f \in E_n \cap H^{1,0}(M)$ then $\partial^\pm f \in L^2(M, \text{vol}_M)$ and by definition of the Sobolev spaces 
$$
\Vert \partial^\pm f \Vert_{L^2(M, \text{vol}_M)} \leq \vert f \vert_{1,0}\,.
$$
By the commutation relations we get (taking the derivative with respect to $\Theta$ in weak sense)
$$
\Theta(\partial^\pm f) =   [\Theta, \partial^\pm] f + \partial^\pm \Theta f \\
 =  \mp i \partial^\pm f  + in \partial^\pm \Theta f  = i (n\mp 1) \partial^\pm f \,,
$$
hence $\partial^\pm f\in E_{n\mp 1}$, as claimed.
\end{proof}

Let $\mathcal M^\pm$ denote the kernels of $\partial^\pm$ in
$L^2(M, \text{vol}_M)$, or in other terms the kernels of the
adjoint operators $(\partial^\mp)^\ast$ (which are extensions
of $-\partial^\pm$ respectively).

The space $\mathcal M^+$ is the space of (foliated) meromorphic
functions and similarly $\mathcal M^-$ is the space of (foliated) anti-meromorphic functions. We have the well-known descriptions:
\begin{lemma} The space $\mathcal M^\pm$ decomposes as as orthogonal sum of its components in the eigenspaces of the rotation:
$$
\mathcal M^\pm = \bigoplus_{n\in \Z} (\mathcal M^\pm \cap E_n). $$
For every $n\in \Z$, the space $\mathcal M^+_n:= \mathcal M^\pm \cap E_n$ and $\mathcal M^-_n:= \mathcal M^-_n \cap E_n$ are in bijective correspondence with the space of holomorphic and, respectively, anti-holomorphic $n$-differentials. 
\end{lemma}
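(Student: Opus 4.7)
The plan is to prove the orthogonal splitting first and then construct the bijection inside each $\Theta$-eigenspace.

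For the splitting, I would show that each projection $\pi_n$ preserves $\mathcal{M}^\pm$. The commutation relations \eqref{eq:comm_rel_CR} together with Lemma \ref{lemma:plus_minus} yield the intertwining identities $\pi_n\partial^- = \partial^-\pi_{n-1}$ and $\pi_n\partial^+ = \partial^+\pi_{n+1}$ on $C^\infty_\Sigma(M)$, which extend to $H^{1,0}(M)$ thanks to Lemma \ref{lemma:pi_n}. Given $f\in\mathcal{M}^+ = \ker(\partial^-)^*$ and any $g\in H^{1,0}(M)$, the self-adjointness of the orthogonal projection $\pi_n$ then gives
$$
\langle \pi_n(f),\partial^- g\rangle \;=\; \langle f,\pi_n\partial^- g\rangle \;=\; \langle f,\partial^-\pi_{n-1}(g)\rangle \;=\; 0,
$$
so $\pi_n(f)\in\mathcal{M}^+$, and the parallel argument handles $\mathcal{M}^-$. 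Combined with Lemma \ref{fouriertangent}, this yields the orthogonal direct sum $\mathcal{M}^\pm = \bigoplus_{n\in\Z}(\mathcal{M}^\pm\cap E_n)$.

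For the bijection I would argue in the canonical local coordinates. At a regular point, formulas \eqref{eq:loc_cone} specialize to $\partial^+ = e^{-i\theta}\partial_{\bar z}$ and $\partial^- = e^{i\theta}\partial_{z}$, and any $f\in E_n$ locally takes the form $f(z,\theta)=e^{in\theta}h(z,\bar z)$. The equation $\partial^+ f=0$ (resp.\ $\partial^- f=0$) thus collapses to $\partial_{\bar z}h=0$ (resp.\ $\partial_z h=0$), so $h$ is holomorphic (resp.\ anti-holomorphic) in $z$, and the assignment $f\leftrightarrow h(z)\,dz^n$ is a linear isomorphism onto germs of holomorphic $n$-differentials. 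Two canonical charts differ by a Euclidean transition $w=e^{i\theta_0}z+c$ which acts by $\theta\mapsto\theta-\theta_0$ while $dw^n = e^{in\theta_0}dz^n$, so the phases cancel and the local data patches consistently into a global holomorphic (resp.\ anti-holomorphic) $n$-differential on $S\setminus\Sigma$.

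At a cone point of angle $2\pi(\alpha+1)$ the formulas \eqref{eq:loc_cone} read $\partial^+=e^{-i\theta}\bar z^{-\alpha}\partial_{\bar z}$ and $\partial^-=e^{i\theta}z^{-\alpha}\partial_z$. Writing $f(z,\theta)=e^{in\theta}h(z,\bar z)$ on the multivalued chart, the fiber identification \eqref{eq:cone_point_coord} imposes the monodromy relation $h(e^{2\pi i}z,e^{-2\pi i}\bar z) = e^{2\pi i n\alpha}\,h(z,\bar z)$, while $\partial^+ f=0$ yields $\partial_{\bar z}h=0$. Hence $h(z)$ is holomorphic on the branched chart and transforms under the deck map $z\mapsto e^{2\pi i}z$ by the character $e^{2\pi i n\alpha}$, which is precisely the monodromy of an $n$-differential $h(z)\,dz^n$ in the coordinate $z$. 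The $L^2(M,\text{vol}_M)$-integrability of $f$ against $|z|^{2\alpha}\,dx\wedge dy\wedge d\theta$ fixes the admissible behavior at the puncture, matching the natural $L^2$-condition for $n$-differentials relative to the singular flat metric $|z|^\alpha|dz|$. The anti-holomorphic case is strictly analogous.

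The main obstacle is the bookkeeping at the cone points: one must carefully reconcile the fiber twist $\theta\mapsto\theta+2\pi\alpha$ on $M$ with the monodromy multiplier $e^{2\pi i n\alpha}$ acquired by $h(z)\,dz^n$ across the branch cut, and confirm that the global $L^2$-condition on $M$ translates into the intrinsic integrability condition for holomorphic $n$-differentials with respect to the cone metric. Once this dictionary is set up, the assertion reduces to the classical identification of $\ker\partial_{\bar z}$ with holomorphic functions.
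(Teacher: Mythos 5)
Your argument for the orthogonal splitting is the same duality computation as the paper's (you show that $\pi_n$ preserves $\mathcal M^\pm$ by combining the index-shifting property $\pi_n\partial^\mp = \partial^\mp\pi_{n'}$ from Lemma \ref{lemma:plus_minus} with the self-adjointness of $\pi_n$, which is exactly the paper's chain $\langle \pi_n(m^\pm),\partial^\mp v\rangle = \langle m^\pm,\partial^\mp\pi_{n'}(v)\rangle =0$), and your chart-by-chart identification $f=e^{in\theta}h \leftrightarrow h\,dz^n$ is just the local-coordinate form of the paper's evaluation map $\imath_n^\pm(d_n)(x,\xi)=d_n(x)(\xi)$, so the proposal is correct and follows essentially the same route. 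If anything, you are more explicit than the paper about the cone-point monodromy and the $L^2$ matching, which the paper compresses into ``it can be verified by a local calculation'' and defers in detail to Theorem \ref{thm:meromorphic}.
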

\begin{proof} For every $m^\pm \in \mathcal M^\pm$ we have
the decomposition
$$
m^\pm = \sum_{n\in \Z} \pi_n (m^\pm)\,.
$$
By Lemma \ref{lemma:plus_minus} it follows that $\pi_n (m^\pm)
\in \mathcal M^\pm$, since for all $v\in H^{1,0}(M)$,
$$
\langle \pi_n (m^\pm), \partial^\mp v\rangle = \langle m^\pm,  \pi_n (\partial^\mp v)\rangle =
\langle m^\pm, \partial^\mp ( \pi_{n\pm 1} (v) )\rangle =0\,.
$$
For every $n \in \Z$, let now $\mathcal D^+_n$ and $\mathcal D^-_n$ denote the space of holomorphic and, respectively, anti-holomorphic $n$-differentials, defined
as holomorphic and, respectively, anti-holomorphic sections
of the $n$-th power of the canonical bundle of the Riemann surface $S$. There exist maps $\imath_n^\pm: \mathcal D^\pm_n \to \mathcal M^\pm_n$ defined as follows: for all $d^\pm_n \in \mathcal D^\pm_n $,
$$
\imath_n^\pm (d^\pm_n) (x,\xi) = d^\pm_n(x) (\xi)\,, \quad \text{for all } (x,\xi) \in T_1(S) \,.
$$
It can be verified by a local calculation that,since the
differentials $d^+_n$ and $d^-_n$ are respectively holomorphic
and anti-holomorphic, the functions $\imath_n^+ (d^+_n)$ and
$\imath_n^- (d^-_n)$ belong to the space $\mathcal M^+$ and
$\mathcal M^-$ respectively. In addition, since $d^\pm_n$ are
(smooth) $n$-differentials, the functions $\imath_n^\pm (d^\pm_n)$ are square integrable and belong to the space
$E_n$. Finally, it can be verified that given any function
$m^\pm_n \in \mathcal M^\pm_n$, there exists a unique $n$-differentials $d^\pm_n$ such that
$$
 d^\pm_n(x) (\xi) =  m^\pm_n (x,\xi) \,, \quad \text{for all } (x,\xi) \in T_1(S) \,,
$$
hence the maps $\imath_n^\pm: \mathcal D^\pm_n \to \mathcal M^\pm_n$ are isomorphisms.
\end{proof}

\section{The horizontal (holonomy) foliation}
\label{sec:horizontal}

In this section we examine the dynamical properties of the horizontal
foliation, including its ergodicity for surfaces with non-rational holonomy
and the cohomological equation for the foliated Laplacian under Diophantine
conditions on the holonomy representation. Bounds on the inverse of the 
foliated Laplacian will be relevant in the study of the cohomological equation
for the flat geodesic flow.

\subsection{Ergodicity}
\label{ssec:hor_ergodicity}
We recall that according to Definition \ref{def:hol_fol} the horizontal (holonomy) foliation $\mathcal F_H$ has leaves tangent to the integrable distribution $\{X,Y\} \subset TM$.  We have the following result.

\begin{theorem}
\label{thm:holonomy_erg}
Suppose that the holonomy representation $\pi_1(S\setminus \Sigma)$ is non-rational.  Then any function $u \in L^2(M, \text{vol}_M)$ constant along the leaves of the horizontal foliation $\mathcal F_H$  is constant, hence the foliation
$\mathcal F_H$ is ergodic. 
\end{theorem}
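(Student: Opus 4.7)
The plan is to combine the $\Theta$-Fourier decomposition of Lemma \ref{fouriertangent} with the creation/annihilation structure of Lemma \ref{lemma:plus_minus} for the Cauchy--Riemann operators $\partial^\pm = X\pm iY$, and then to globalize via the universal cover $\widetilde{S\setminus\Sigma}$, where the non-rational holonomy hypothesis provides the needed rigidity.

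First I would observe that if $u\in L^2(M,\text{vol}_M)$ is constant along leaves of $\mathcal F_H$, then $Xu = Yu = 0$ and hence $\partial^+u = \partial^-u = 0$ distributionally. Writing $u = \sum_{n\in\Z} u_n$ with $u_n := \pi_n(u)\in E_n$, the commutation relations $[\Theta,\partial^\pm] = \mp i\partial^\pm$ yield $\pi_n\circ\partial^\pm = \partial^\pm\circ\pi_{n\pm 1}$ in the weak sense (by pairing against smooth test functions in $E_{n\mp 1}$ and invoking that the adjoint of $\partial^\pm$ extends $-\partial^\mp$), so $\partial^\pm u_n = 0$ weakly for every $n\in\Z$. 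On a canonical chart $(z,\theta)$ around a regular point, $u_n(z,\theta) = e^{in\theta}\phi_n(z)$, and the two annihilation equations become $\partial_z\phi_n = \partial_{\bar z}\phi_n = 0$; by elliptic regularity $\phi_n$ is smooth and locally constant. Near a cone point the local formulas \eqref{eq:loc_cone} introduce singular factors $z^{-\alpha}, \bar z^{-\alpha}$ that are nonetheless invertible on the punctured neighborhood $\{0<|z|<r\}$, so one recovers the same Cauchy--Riemann system for $\phi_n$ away from $z=0$ and concludes that $\phi_n$ is locally constant there as well.

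To globalize, I would lift $u_n$ to the unit tangent bundle over $\widetilde{S\setminus\Sigma}$, where the flat structure is trivialized via the developing map; the pullback takes the form $e^{in\theta}\tilde\phi_n(\tilde z)$ with $\tilde\phi_n$ locally constant on a simply connected set, hence equal to a global constant $c_n$. A deck transformation by $\gamma\in\pi_1(S\setminus\Sigma)$ acts as $(\tilde z,\theta)\mapsto(\gamma\tilde z,\theta + 2\pi\theta(\gamma))$, where $2\pi\theta(\gamma)$ is the corresponding holonomy angle, so invariance of $\tilde u_n$ under the deck action forces
$$
e^{2\pi i n\theta(\gamma)}\,c_n \;=\; c_n \qquad\text{for every }\gamma\in\pi_1(S\setminus\Sigma).
$$
The non-rational holonomy hypothesis supplies some $\gamma_0$ with $\theta(\gamma_0)\notin\Q$, and then $e^{2\pi i n\theta(\gamma_0)}\neq 1$ for every $n\neq 0$, forcing $c_n = 0$ for all such $n$. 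Only the zero mode $u_0 = c_0$ survives, showing that $u$ is a constant on $M$.

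The main obstacle will be the analysis at the cone points: the coefficients of $\partial^\pm$ blow up there, so one must argue carefully that the weak solutions $\phi_n$ of the Cauchy--Riemann system extend across $z=0$ to locally constant functions on full neighborhoods rather than merely on punctured disks. I expect this to be handled by combining elliptic regularity on the regular locus with a Riemann-type removable singularity argument applied to the bounded holomorphic (and antiholomorphic) function $\phi_n$ on a punctured disk, thereby recovering global constancy on $\widetilde{S\setminus\Sigma}$.
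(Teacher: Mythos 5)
Your proposal is correct, but it takes a genuinely different route from the paper. After the common starting point (Fourier decomposition in $\Theta$ and the reduction, via the duality $\pi_n\circ\partial^\pm=\partial^\pm\circ\pi_{n\pm 1}$, to the system $\partial^+u_n=\partial^-u_n=0$ for each $n$), the paper argues dynamically: it picks a closed loop $\gamma$ in $S\setminus\Sigma$ whose holonomy is an irrational rotation, lifts $\gamma$ to the leaf through $(x,\xi)$ by parallel transport, and uses that the lifted path starts and ends in the same fiber at points differing by $\phi^\Theta_\theta$; then constancy of $u_n$ along the leaf together with $u_n\in E_n$ forces $e^{2\pi i n\theta}u_n(x,\xi)=u_n(x,\xi)$. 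Your argument instead upgrades the weak equations $\partial^\pm u_n=0$ to smoothness by Weyl's lemma (writing $u_n=e^{in\theta}\phi_n(z)$ locally and getting $\partial_z\phi_n=\partial_{\bar z}\phi_n=0$, hence $\phi_n$ locally constant), pulls back to $T_1(\widetilde{S\setminus\Sigma})$ where simple connectivity makes $\tilde\phi_n$ globally constant, and then extracts the same constraint $e^{2\pi i n\theta(\gamma)}c_n=c_n$ from deck-invariance. What your approach buys is a cleaner treatment of the regularity question: the paper evaluates $u_n$ at individual points of $M$, which tacitly requires more than membership in $L^2$, whereas your elliptic-regularity step makes this pointwise evaluation legitimate. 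What the paper buys is that it never leaves $M$ and never needs the developing map.

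One remark: your worry in the last paragraph about removable singularities at the cone points is unnecessary given the structure of your own argument. You globalize on the universal cover $\widetilde{S\setminus\Sigma}$, which is a simply connected surface \emph{without} cone points (they have been removed before passing to the cover), so a locally constant function there is automatically a global constant $c_n$ and no extension across $z=0$ is required. For the $n=0$ mode, $u_0$ is locally constant on the connected open set $S\setminus\Sigma$, hence constant there, and $\Sigma$ being finite this determines $u_0$ up to a null set; again no removable-singularity lemma is needed.
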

\begin{proof}
Let $u\in L^2(M, \text{vol}_M)$ be any function constant along the leaves of
the foliation $\mathcal F_H$. Since each leaf of $\mathcal F_H$ is invariant under the flows $\phi^X_{\R}$ and $\phi^Y_{\R}$ it follows that 
in the weak $L^2$ sense we have
$$
Xu=Yu= 0\,,  \quad \text{ hence } \quad \partial^\pm u =0 \,,
$$
Now expand $u = \sum \limits_{n} u_n$ in terms of the eigenspaces of $\phi^\Theta_{\R}$. We claim that the function $u_n$ is constant
along the leaves of $\mathcal F_H$, for all $n \in \Z$. 
In fact, for each $n\in \Z$ and for any $v \in H^{1,0}(M)$, since
$\partial ^\pm : E_n \cap H^{1,0}(M) \to E_{n\mp 1} $ by Lemma \ref{lemma:plus_minus} and $\pi_{n\pm 1} (v) \in H^{1,0}(M)$ by Lemma \ref{lemma:pi_n}, we have 
$$
\langle u_n, \partial^\pm v \rangle = \langle \pi_n(u), \partial^\pm v \rangle
= \langle u, \pi_n (\partial^\pm v) \rangle = \langle u, \partial^\pm \pi_{n\mp 1}(v) \rangle =0\,.
$$
We finally claim that under the hypotheses that the holonomy representation
is non-rational, then $u_0 \in \C$ is constant and $u_n =0$ for all
$n \in \Z \setminus \{0\}$.

In fact, since the holonomy representation is not rational, for any point
$x\in S\setminus \Sigma$ there exists a closed path $\gamma: [0,1] \to S\setminus \Sigma$ such that $\gamma (0)=\gamma(1) =x$ and such that the
holonomy map corresponding to $\gamma$ (on the fiber $M_x$) is an irrational
rotation. For any $(x,\xi) \in M_x$, let then $\hat \gamma_\xi:[0,1] \to M$ denote the lift of $\gamma$ defined by the parallel transport of $\xi\in M_x$ along
$\gamma$. There exists $\theta \in \R$ such that $2\pi \theta \not \in \Q$ such that
$$
\hat \gamma_\xi (1) = \phi^\Theta_{\theta} ( \hat \gamma_\xi(0) )= 
\phi^\Theta_{\theta} (x,\xi)\,.
$$
For all $n\in \Z$, since $u_n$ is constant along the leaves of the
foliation $\mathcal F_H$, hence constant along the path $\hat \gamma_\xi$ and
since $u_n \in E_n$ we have
$$
e^{2\pi i n \theta} u_n(x,\xi)=  u_n (\hat \gamma_\xi (1)) = u_n  (\hat \gamma_\xi (0)) = u_n(x,\xi)\,.
$$
For all $n\not =0$, since $2\pi \theta$ is irrational, it follows that 
$u_n(x,\xi)=0$, and as a consequence, since $(x,\xi)$ was any given point in $M$,
we conclude that $u_n \equiv 0$. 

For $n=0$, we have that $u_0$, as a $\phi^\Theta_{\R}$-invariant function
is the pull-back of a function on~$S$. Since $u_0$ is constant along leaves of
$\mathcal F_H$ which project onto $S$ by local diffeomorphisms, it follows that
$u_0$ is locally constant, hence constant. 
\end{proof}

As a consequence of the ergodicity of the holonomy foliation we
prove that the flat geodesic flow and its rotations have no non-constant regular invariant function.

\begin{corollary} 
\label{cor:no_smooth_inv}
Suppose that the holonomy representation $\pi_1(S\setminus \Sigma)$ is non-rational.
Let $v\in H^{1,0}(M)$ be any function such that
there exists $\theta \in S^1$ such that $X_\theta v=0$. Then $v$
is constant along the holonomy foliation, hence it is constant.
\end{corollary}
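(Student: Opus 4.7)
The plan is to reduce the corollary to Theorem \ref{thm:holonomy_erg} by showing that invariance in a single direction $X_\theta v=0$ already forces the full horizontal invariance $Xv=Yv=0$. Writing $X_\theta = \cos\theta\, X + \sin\theta\, Y$ and recalling that $X = (\partial^+ + \partial^-)/2$ and $Y = (\partial^+ - \partial^-)/(2i)$, one has $X_\theta = \tfrac{1}{2}\bigl(e^{-i\theta}\partial^+ + e^{i\theta}\partial^-\bigr)$. I would then decompose $v = \sum_{n\in\Z} v_n$ via the Fourier decomposition of Lemma \ref{fouriertangent}, with $v_n := \pi_n(v)\in E_n\cap H^{1,0}(M)$ by Lemma \ref{lemma:pi_n}. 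Because $\partial^\pm$ shifts the Fourier index by $\mp 1$ (Lemma \ref{lemma:plus_minus}), projecting the hypothesis onto $E_n$ isolates exactly two contributions, yielding
$$
e^{-i\theta}\,\partial^+ v_{n+1}\;+\;e^{i\theta}\,\partial^- v_{n-1}\;=\;0,\qquad n\in\Z.
$$

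The heart of the argument would then be a summability observation. Setting $a_m := \|Xv_m\|^2 + \|Yv_m\|^2$, Lemma \ref{lemma:CR_identity} gives $a_m = \|\partial^+ v_m\|^2 = \|\partial^- v_m\|^2$, so the displayed identity becomes the two-step recursion $a_{n+1} = a_{n-1}$ for every $n\in\Z$. Hence the non-negative sequence $(a_m)_{m\in\Z}$ is constant on even indices and constant on odd indices. On the other hand, since $\partial^+ v_m \in E_{m-1}$ and the eigenspaces $E_{m-1}$ are pairwise orthogonal, one has
$$
\sum_{m\in\Z} a_m \;=\; \|\partial^+ v\|^2 \;=\; \|Xv\|^2 + \|Yv\|^2 \;\leq\; |v|_{1,0}^2 \;<\; \infty,
$$
again by Lemma \ref{lemma:CR_identity} applied to $v$. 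Both constant progressions being summable forces $a_m=0$ for every $m\in\Z$.

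From $Xv_m = Yv_m = 0$ for all $m$, summation in $L^2$ recovers $Xv = Yv = 0$, so $v$ is constant along every leaf of $\mathcal F_H$ in the weak sense; Theorem \ref{thm:holonomy_erg} then concludes that $v$ is constant on $M$. The conceptual obstacle is that single-direction invariance $X_\theta v=0$ \emph{a priori} only couples adjacent Fourier modes and so could support non-decaying parity-indexed oscillations; the $H^{1,0}$-regularity of $v$, together with the conservation of $a_m$ along the two-step recursion, is precisely what closes this loophole and upgrades invariance in one direction to invariance along the entire horizontal distribution.
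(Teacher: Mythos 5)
Your proposal is correct and follows essentially the same route as the paper: project onto the $\Theta$-eigenspaces, use the Cauchy--Riemann norm identity of Lemma \ref{lemma:CR_identity} to turn $X_\theta v=0$ into the two-step recursion $a_{n+1}=a_{n-1}$ on the mode-wise energies, and kill them all by summability of $\|\partial^\pm v\|^2$, then invoke Theorem \ref{thm:holonomy_erg}. Your write-up is, if anything, a cleaner presentation of the same argument.
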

\begin{proof}  We have
$$
X_\theta v= e^{-i\theta} \partial^+v + e^{i\theta}\partial^- v=0\,.
$$
Following Lemma \ref{lemma:pi_n} let us write $v =\sum_{n\in \Z} \pi_n (v)$ as a series of its projections $\pi_n(v)$ onto $E_n \cap H^{1,0}(M)$. By Lemma \ref{lemma:plus_minus} and by the above identity $X_\theta v=0$ is equivalent to the sequence
of identities
$$
e^{-i\theta} \partial^+v_{n+1} = - e^{i\theta}\partial^- v_{n-1}\,,  \quad  \text{for all } n\in \Z\,.
$$
By Lemma \ref{lemma:CR_identity} it follows that, for all $n\in \Z$, we have
\begin{equation}
\label{eq:CR_recursion}
\begin{aligned}
\Vert \partial^+v_{n+1} \Vert_{L^2(M, \text{vol}_M)} =
\Vert \partial^-v_{n-1} \Vert_{L^2(M, \text{vol}_M)} =
\Vert \partial^+v_{n-1} \Vert_{L^2(M, \text{vol}_M)} \,, \\
\Vert \partial^-v_{n-1} \Vert_{L^2(M, \text{vol}_M)} =
\Vert \partial^+v_{n+1} \Vert_{L^2(M, \text{vol}_M)} =
\Vert \partial^-v_{n+1} \Vert_{L^2(M, \text{vol}_M)}
\end{aligned}
\end{equation}
However, since $\partial^+v$ and $\partial^-v\in L^2(M, \text{vol}_M)$, we have that 
$$
\sum_{n\in \Z} \Vert \partial^+v_{n} \Vert^2_{L^2(M, \text{vol}_M)} = \sum_{n\in \Z} \Vert \partial^-v_{n} \Vert^2_{L^2(M, \text{vol}_M)} \leq \vert v \vert^2_{1,0} <+\infty \,,
$$
which by the recursion in formula \eqref{eq:CR_recursion} implies
that 
$$
\partial^+v_{n} = \partial^-v_n =0 \,, \quad \text{ for all } n\in \Z\,.
$$
We have thus prove that $\partial^+ v = \partial^-v=0$, or,
in other terms $Xv = Yv=0$, that is, $v$ is invariant along
the holonomy foliation, hence by Theorem \ref{thm:holonomy_erg}  it is constant.
\end{proof}

\subsection{A Cheeger-type bound}
\label{ssec:Cheeger}
In this subsection we prove a lower bound on the eigenvalues of the Dirichlet quadratic form of the foliated Laplacian $H=- (X^2+Y^2)$. 

We note that since the operator $H$ commutes with the flow $\phi^\Theta_{\R}$ it also commutes with the Laplacian $\Delta=-(H + \Theta^2)$. It follows that all these operators can be simultaneously diagonalized with eigenfunctions in the Sobolev space $H^{1,\infty} (M)$. 

For every $n\in \Z$, let $\lambda_n(H)$ be the least eigenvalue of the Dirichlet quadratic form of $H$ on $E_n$. We have the following

\begin{theorem}
\label{thm:sdestimate}
If the holonomy of the flat surface $(S,R)$ satisfies a simultaneous Diophantine  condition of exponents $\gamma >0$ (see Definition \ref{def:Dioph_cond}), then there is a constant $C > 0$  such that, for all
$n \in \mathbb{Z} \setminus \{0\}$,
\[
\lambda_n(H) \geq \frac{C}{n^{2\gamma}}.
\label{sdestimate}
\]
\end{theorem}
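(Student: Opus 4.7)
The plan is to establish a Poincaré-type inequality for functions $f \in E_n \cap H^{1,0}(M)$ with $n \neq 0$, namely $\|f\|_{L^2(M)}^2 \leq C\, n^{2\gamma} \bigl(\|Xf\|_{L^2(M)}^2 + \|Yf\|_{L^2(M)}^2\bigr)$, which by Lemma \ref{lemma:CR_identity} is equivalent to $\lambda_n(H) \geq C^{-1} n^{-2\gamma}$. The estimate is obtained by transferring the Diophantine lower bound on the rotation angles to the holonomy of $f$ along a fixed set of generating loops.

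First, fix generators $g_1, \dots, g_d$ of $\pi_1(S\setminus\Sigma)$ realizing the Diophantine condition of Definition \ref{def:Dioph_cond}, with rotation angles $2\pi\theta_1,\dots,2\pi\theta_d$. For each $i$, choose a smooth, arclength-parameterized representative loop $\gamma_i:[0,L_i] \to S\setminus\Sigma$, and for each base-point $u = (\gamma_i(0), \xi_0) \in M_{\gamma_i(0)}$ denote by $\tilde\gamma_{i,u}:[0,L_i]\to M$ its horizontal lift, which stays on a leaf of $\mathcal F_H$. By the definition of the rotation holonomy, $\tilde\gamma_{i,u}(L_i) = \phi^\Theta_{2\pi\theta_i}(u)$, so the defining identity $f\circ \phi^\Theta_\theta = e^{in\theta}f$ of $E_n$ gives $f(\tilde\gamma_{i,u}(L_i)) = e^{i 2\pi n \theta_i} f(u)$. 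Applying the fundamental theorem of calculus to $t\mapsto f(\tilde\gamma_{i,u}(t))$ together with the Cauchy-Schwarz inequality yields the pointwise estimate
\begin{equation}
\bigl|e^{i 2\pi n\theta_i}-1\bigr|^2 |f(u)|^2 \,\leq\, L_i \int_0^{L_i} \bigl(|Xf|^2+|Yf|^2\bigr)(\tilde\gamma_{i,u}(t))\, dt \,.
\label{eq:loop_ptwise}
\end{equation}

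To globalize \eqref{eq:loop_ptwise}, I would sweep the loop $\gamma_i$ through a small tubular family $\{\gamma_i^s\}_{s \in T_i}$ of parallel curves indexed by a transversal $T_i$ to $\gamma_i(0)$, and integrate the corresponding horizontal lifts over the fiber. Fubini's theorem produces a local bound
\[
\bigl|e^{i 2\pi n\theta_i}-1\bigr|^2 \int_{\Omega_i}|f|^2\, d\text{vol}_M \,\leq\, K_i \int_{\Omega_i^{\#}}\bigl(|Xf|^2+|Yf|^2\bigr)\, d\text{vol}_M
\]
on an open set $\Omega_i \subset M$ covered by the lifts with bounded multiplicity, where $\Omega_i^{\#}$ is a slight thickening. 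Choosing several translates of each $\gamma_i$ so that the sets $\{\Omega_{i,j}\}$ cover $M$ with bounded overlap and summing the local inequalities leads to
\[
\sum_{i=1}^d \bigl|e^{i 2\pi n\theta_i}-1\bigr|^2\, \|f\|^2_{L^2(M)} \,\leq\, C\, \bigl(\|Xf\|^2_{L^2(M)}+\|Yf\|^2_{L^2(M)}\bigr)\,.
\]
Combining the elementary inequality $|e^{i 2\pi t}-1|\geq 4\, d(t,\Z)$, Cauchy-Schwarz in $\R^d$, and the Diophantine assumption,
\[
\sum_{i=1}^d |e^{i 2\pi n\theta_i}-1|^2 \,\geq\, \frac{16}{d}\Bigl(\sum_{i=1}^d d(n\theta_i,\Z)\Bigr)^2 \,\geq\, \frac{16 C_0^{\,2}}{d\, n^{2\gamma}}\,,
\]
which, substituted into the previous display, yields the theorem.

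The main technical obstacle lies in the tubular-cover step. Generators corresponding to small loops around a cone point cannot be translated uniformly in $S$ without approaching the singularity or crossing other cone points, so the construction of the transversal families $\{\gamma_i^s\}$ and the bounded-multiplicity cover must be adapted locally: near each cone point $p \in \Sigma$ one uses a family of concentric loops in the coordinate $z$ of \eqref{conepointmetric} and controls the Jacobian of the associated change of variables, while in the bulk the standard tubular-neighborhood construction applies. Patching the estimates via a partition of unity produces geometric constants that depend on the singular structure of $(S,R)$ but stay uniform in $n$, which is all that is required for the bound $\lambda_n(H) \geq C/n^{2\gamma}$.
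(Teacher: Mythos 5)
Your strategy is genuinely different from the paper's: instead of regularizing $H$ to the elliptic operators $\Delta_\epsilon$ and running Cheeger's isoperimetric argument (Theorems \ref{thm:Cheeger} and \ref{thm:Cheeger_const}), you aim for a direct Poincar\'e inequality on $E_n$ by transporting $f$ along horizontal lifts of the generating loops and using $f(\tilde\gamma_{i,u}(L_i))=e^{2\pi i n\theta_i}f(u)$. The local ingredients are sound: the pointwise estimate \eqref{eq:loop_ptwise} is correct, the Fubini argument over a tubular family of parallel core curves does give
\[
\bigl|e^{2\pi i n\theta_i}-1\bigr|^2\int_{\hat A_i}|f|^2\,d\mathrm{vol}_M\ \leq\ K_i\int_{\hat A_i}\bigl(|Xf|^2+|Yf|^2\bigr)\,d\mathrm{vol}_M
\]
on the preimage $\hat A_i\subset M$ of an annular neighborhood $A_i$ of $\gamma_i$, and the final chain of elementary inequalities correctly converts $\sum_i|e^{2\pi i n\theta_i}-1|^2$ into $C\,n^{-2\gamma}$ via Definition \ref{def:Dioph_cond}.

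The gap is in the summation step. Your displayed global inequality has $\sum_{i=1}^d|e^{2\pi i n\theta_i}-1|^2$ multiplying the \emph{full} norm $\|f\|^2_{L^2(M)}$; for that you would need, for \emph{each fixed} $i$, a family of translates of $\gamma_i$ covering all of $M$ with bounded multiplicity. But translates of a single embedded loop sweep out only an annulus (and concentric loops around a cone point only a punctured disk), never the whole surface. If instead you only arrange that the union over all pairs $(i,j)$ covers $M$, then summing the local inequalities yields a lower bound by $\min_i|e^{2\pi i n\theta_i}-1|^2\cdot\|f\|^2_{L^2(M)}$, not by the sum --- and the simultaneous Diophantine condition \eqref{sdioph} controls only the sum: individual terms $d(n\theta_i,\Z)$ may vanish along infinitely many $n$ (e.g.\ if one angle is rational), so the $\min$ version proves nothing. (Replacing the based loops by conjugates $\delta_x\gamma_i\delta_x^{-1}$ through every $x$ does not repair the Fubini step, since all such loops concentrate on $\gamma_i$ and the multiplicity blows up.) To close the argument you would need an additional propagation estimate connecting every point of $M$ to each annulus $\hat A_i$ by horizontal paths of uniformly bounded length and Jacobian, so that smallness of $f$ on the one ``Diophantine-good'' annulus $\hat A_{i_0}$ (where $d(n\theta_{i_0},\Z)\gtrsim n^{-\gamma}/d$) forces smallness of $f$ on all of $M$ up to the gradient terms. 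This global connectivity is exactly what the paper's Cheeger route obtains for free: every separating hypersurface in $\mathcal S_n(M)$ is an $n$-cover of $S\setminus\Sigma$ and therefore must cross \emph{every} annulus $A_i$, which is why the contributions $d(n\theta_i,\Z)$ add up in Theorem \ref{thm:Cheeger_const}.
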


The proof of this result is based on Cheeger's proof of a lower bound for the least eigenvalue of the Laplace-Beltrami operator. Since $H$ is not the Laplacian of a Riemannian metric, we consider the family of elliptic operators 
$$
\Delta_\epsilon = H - \epsilon^2 \Theta^2\,. 
$$
For all $\epsilon>0$ the operators $\Delta_\epsilon$ are the Laplace-Beltrami operator for a metric on $M$ with conical singularities along finite many circles.  

For every $\epsilon >0$ and $n\in \Z$, since the Laplace operators $\Delta_\epsilon$ and $H$ commute (and commute with the Lie derivative operator $\Theta$) we have that
\begin{equation}
\label{eq:eigen_limit}
\lambda_n(H) = \lim_{\epsilon \to 0}  \lambda_n(\epsilon)\,.
\end{equation}
We will prove below a Cheeger-type bound on the smallest eigenvalue of the
Dirichlet for of $\Delta_\epsilon$ on the subspace $E_n$, for all $n\in \Z\setminus \{0\}$. 

Let $\mathcal S_n(M)$ denote the set of all separating hypersurfaces in 
$M$ which are $n$-covers of $S\setminus \Sigma$ under the canonical projection
of $M=T_1(S\setminus \Sigma)$ onto $S\setminus \Sigma$ (in the sense that the 
restriction of the projection to the hypersurface is onto with fibers of cardinality equal to $n\not=0$). 

Let $\text{Area}_\epsilon$ and $\text{Vol}_\epsilon$ denote the $2$-dimensional
volume (area) and  the $3$-dimensional volume of the metric $g_\epsilon$ on $M$ (defined by the condition that $\{X, Y, \epsilon \Theta\}$ is a orthonormal frame of $TM$) whose Laplace-Beltrami operator is $\Delta_\epsilon$.

For every $\mathcal S \in \mathcal S_n(M)$, let $M'_{\mathcal S}$ and  $M''_{\mathcal S}$ the components of $M\setminus {\mathcal S}$. Let then
 \begin{equation}
h_n(\epsilon) :=  \inf_{{\mathcal S} \in \mathcal S_n(M)}
 \dfrac{\text{Area}_{\epsilon}({\mathcal S})}{\min\{\text{Vol}_{\epsilon}(M'_{\mathcal S}), \text{Vol}_{\epsilon}(M''_{\mathcal S}) \}}.
 \label{h_nepsilon}
\end{equation}
We prove below the following lower bound:

\begin{theorem}
\label{thm:Cheeger}
For all $n\in \Z\setminus \{0\}$, the least Friederichs eigenvalue $\lambda_n(\epsilon)$ of the Dirichlet form of the operator $\Delta_{\epsilon}$ on $M$ restricted to $E_n$  is bounded below as
\[
\lambda_n(\epsilon) \geq \left(\frac{h_n(\epsilon)}{2} \right)^2.
\]
\end{theorem}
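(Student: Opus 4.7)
The plan is to adapt Cheeger's classical test-function argument to the rotation eigenspace $E_n$, using a real-valued eigenfunction derived from a complex eigenfunction in $E_n$ and exploiting the twist structure of $E_n$ to identify the relevant super-level-set boundaries as separating $n$-covers of $S \setminus \Sigma$.

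First I would extract a real eigenfunction. The operator $\Delta_\epsilon$ is the Laplace--Beltrami operator of the rotationally invariant metric $g_\epsilon$ on $M$; it commutes with the $S^1$-action $\phi^\Theta$ and preserves each $E_n$, and being elliptic and self-adjoint (in the Friedrichs sense) it has discrete spectrum on $E_n$. Let $f \in E_n \cap H^{1,\infty}(M)$ realize $\lambda_n(\epsilon)$, so $\Delta_\epsilon f = \lambda_n(\epsilon) f$. Since $\Delta_\epsilon$ is a real operator and $\overline{E_n} = E_{-n}$, the real part $u := \mathrm{Re}(f)$ is a real-valued eigenfunction in $E_n \oplus E_{-n}$ with the same eigenvalue; we may assume $u \not\equiv 0$. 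The identity $f \circ \phi^\Theta_{\pi/n} = e^{i\pi} f = -f$ yields the key symmetry $u \circ \phi^\Theta_{\pi/n} = -u$, which forces $\int_M u \, d\mathrm{vol}_\epsilon = 0$; since $\phi^\Theta_{\pi/n}$ is a $g_\epsilon$-isometry interchanging $\{u > 0\}$ with $\{u < 0\}$, each of these sets has volume at most $\mathrm{Vol}_\epsilon(M)/2$.

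I would then run the classical Cheeger cutoff argument on $u_+ := \max(u, 0)$. With $\rho := u_+^2$, the chain rule and Cauchy--Schwarz yield
\[
\int_M |\nabla_\epsilon \rho| \, d\mathrm{vol}_\epsilon \le 2\Big(\int_M u_+^2 \, d\mathrm{vol}_\epsilon\Big)^{1/2}\Big(\int_M |\nabla_\epsilon u_+|^2 \, d\mathrm{vol}_\epsilon\Big)^{1/2},
\]
while the coarea formula combined with the isoperimetric inequality built into the definition of $h_n(\epsilon)$ (applied to the super-level sets $\{u > s\}$, each of volume at most $\mathrm{Vol}_\epsilon(M)/2$) yields
\[
\int_M |\nabla_\epsilon \rho| \, d\mathrm{vol}_\epsilon = \int_0^\infty \mathrm{Area}_\epsilon(\{u = s\}) \cdot 2s \, ds \ge h_n(\epsilon) \int_M u_+^2 \, d\mathrm{vol}_\epsilon.
\]
Combining these yields $\int_M |\nabla_\epsilon u_+|^2 \ge (h_n(\epsilon)/2)^2 \int_M u_+^2$, and the symmetry identities $\int u_+^2 = \int u_-^2 = \tfrac{1}{2}\int u^2$ and $\int |\nabla_\epsilon u_+|^2 = \tfrac{1}{2}\int |\nabla_\epsilon u|^2 = \tfrac{1}{2}\lambda_n(\epsilon)\int u^2$ (consequences of the involution $\phi^\Theta_{\pi/n}$) give $\lambda_n(\epsilon) \ge (h_n(\epsilon)/2)^2$.

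The main obstacle is the second displayed inequality, i.e., justifying the isoperimetric bound with the constant $h_n(\epsilon)$ rather than a weaker constant over higher-degree covers. In a local trivialization $f = g(x) e^{in\theta}$, the level set $\{u = s\}$ is a $2n$-cover of $\{|g(x)| > s\} \subset S \setminus \Sigma$, with its $2n$ fiberwise points naturally grouped into $n$ pairs interchanged by the involution $\phi^\Theta_{\pi/n}$; correspondingly $\{u > s\}$ consists fiberwise of $n$ open arcs whose endpoints form two canonical $n$-tuples (``entering'' and ``exiting''). I would show that each of these $n$-tuples, suitably extended across the locus $\{g = 0\}$ and across the cone points $\Sigma$ (where the local trivialization degenerates and $g_\epsilon$ has conical singularities), gives a genuine separating hypersurface in $\mathcal S_n(M)$, and that summing the $h_n(\epsilon)$ estimates for the two pieces recovers (with room to spare) the inequality $\mathrm{Area}_\epsilon(\{u = s\}) \ge h_n(\epsilon) \cdot \mathrm{Vol}_\epsilon(\{u > s\})$. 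The delicate technical points are (i) verifying that a single ``entering'' $n$-tuple genuinely separates $M$ globally, a statement likely to depend on the non-triviality of the holonomy quantified by the Diophantine condition, and (ii) handling the capping across the singular loci by an approximation argument with smooth $n$-cover hypersurfaces avoiding these loci.
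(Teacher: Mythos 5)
Your proposal is correct and follows essentially the same Cheeger-type argument as the paper: restrict the Rayleigh quotient to real-valued functions in $E_n\oplus E_{-n}$, apply the coarea formula to the square of the function, bound the area-to-volume ratio of the level sets by $h_n(\epsilon)$, and conclude via Cauchy--Schwarz. The one genuine (minor) difference is how the median level is handled: the paper picks $\lambda_0$ abstractly so that $\mathrm{vol}_\epsilon(\Omega^+_f(\lambda_0))=\mathrm{vol}_\epsilon(\Omega^-_f(\lambda_0))$ and runs the estimate separately on $\{f>\lambda_0\}$ and $\{f<\lambda_0\}$, whereas you use the exact symmetry $u\circ\phi^\Theta_{\pi/n}=-u$ to force $\lambda_0=0$ and to get the volume bound $\mathrm{vol}_\epsilon(\{u>s\})\le\mathrm{vol}_\epsilon(M)/2$ for all $s\ge 0$ for free; this is a clean simplification and arguably tightens the paper's argument, since the isometry guarantees that the super-level sets are always the smaller side. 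Your flagged ``main obstacle'' is also well taken: the paper simply asserts that the level sets of $f\in E_n\oplus E_{-n}$ lie in $\mathcal S_n(M)$, while in a local trivialization $u=\mathrm{Re}(g(x)e^{in\theta})$ the regular level $\{u=s\}$ has $2n$ points over each fiber where $|g(x)|>s$ and none where $|g(x)|<s$, so it is neither onto $S\setminus\Sigma$ nor literally an $n$-cover; your proposed splitting into two $n$-sheeted pieces is a reasonable way to reconcile this with the definition of $h_n(\epsilon)$, and is a gap in the paper's own write-up rather than in your approach (note also that neither argument addresses the possibility that a level set fails to be separating, i.e., that its complement has more than two components).
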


\begin{proof} The proof follows Cheeger's argument in \cite{Che70}. 
Let $\nabla_\epsilon$ denote the gradient of the Riemannian metric 
$g_\epsilon$ on $M$ and let $\text{vol}_\epsilon$ denote its volume element.  By the variational principle
$$
\lambda_n(\epsilon) = \inf_{f \in C^\infty_\Sigma(M, \R) \cap (E_n\oplus E_{-n})}
\frac{\Vert \nabla_\epsilon f \Vert^2_{L^2(M,\text{vol}_\epsilon)} } {\Vert f \Vert^2_{L^2(M,\text{vol}_\epsilon)}} \,.
$$
Since $f \in E_n$ with $n\not= 0$, then $f$ has zero average on $M$. For
all $\lambda \in \R$, let 
$$
\begin{aligned}\Omega^+_f(\lambda)&=\{(x,\xi)\in M \vert 
f(x,\xi) >\lambda\}, \\  \Omega^-_f(\lambda)&=\{(x,\xi)\in M \vert 
f(x,\xi) <\lambda\} \,, \\ \mathcal L_f(\lambda) &= \{(x,\xi)\in M \vert 
f(x,\xi) =\lambda\}\,.
\end{aligned}
$$ 
We note that since $f\in E_n\oplus E_{-n}$ (and it is real-valued) all of its level $\mathcal L_f(\lambda)$ in $M$ set are $n$-covers of the surface $S\setminus \Sigma$ and by the Sard theorem they are $2$-dimensional manifolds for almost all $\lambda \in \R$.

Since $M$ is connected and $f$ has zero average, there exists $\lambda_0$
such that $\text{vol}_\epsilon (\Omega^+_f(\lambda_0)) = \text{vol}_\epsilon (\Omega^-_f(\lambda_0) ) $ and by the co-area formula we have
$$
\begin{aligned}
&\Vert \nabla_\epsilon (f - \lambda_0)^2 \Vert_{L^2(\Omega^+_f(\lambda_0), \text{vol}_\epsilon)} = 2\int_{\lambda_0}^\infty A_\epsilon (\mathcal L_f(\lambda)) (\lambda-\lambda_0) d\lambda \\  & \quad \quad \geq 2 \inf_{\lambda\geq \lambda_0} 
\frac{A_\epsilon (\mathcal L_f(\lambda)) }{ \text{vol}_\epsilon (\Omega^+_f(\lambda)) } \int_{\lambda_0}^\infty \text{vol}_\epsilon (\mathcal L_f(\lambda)) (\lambda-\lambda_0) d\lambda \\
& \quad \quad\geq h_n(\epsilon) \Vert f - \lambda_0 \Vert^2_{L^2(\Omega^+_f(\lambda_0), \text{vol}_\epsilon)}
\end{aligned} $$
and, similarly,
$$
\begin{aligned}
&\Vert \nabla_\epsilon (f - \lambda_0)^2 \Vert_{L^2(\Omega^-_f(\lambda_0), \text{vol}_\epsilon)} = 2\int_{-\infty}^{\lambda_0} A_\epsilon (\mathcal L_f(\lambda)) (\lambda_0-\lambda) d\lambda \\  & \quad \quad\geq 2 \inf_{\lambda\geq \lambda_0} 
\frac{A_\epsilon (\mathcal L_f(\lambda)) }{ \text{vol}_\epsilon (\Omega^-_f(\lambda)) } \int_{\lambda_0}^\infty \text{vol}_\epsilon (\mathcal L_f(\lambda)) (\lambda_0-\lambda) d\lambda \\
& \quad \quad\geq h_n(\epsilon) \Vert f - \lambda_0 \Vert^2_{L^2(\Omega^-_f(\lambda_0), \text{vol}_\epsilon)} \,.
\end{aligned} $$
By the above abounds and by the H\"older inequality it follows that
$$
\begin{aligned}
2 \Vert \nabla_\epsilon f  \Vert_{L^2(M, \text{vol}_\epsilon)}
\Vert f -\lambda_0 \Vert_{L^2(M, \text{vol}_\epsilon)} &\geq  \Vert \nabla_\epsilon (f -\lambda_0)^2 \Vert_{L^2(M, \text{vol}_\epsilon)}
\\ &\geq h_n(\epsilon) \Vert f -\lambda_0  \Vert^2_{L^2(M, \text{vol}_\epsilon)}
\end{aligned}
$$
hence finally, since $f$ has zero average,
$$
\Vert \nabla_\epsilon f  \Vert_{L^2(M, \text{vol}_\epsilon)}^2 \geq 
\frac{h_n(\epsilon)^2}{4} \Vert f -\lambda_0 \Vert_{L^2(M, \text{vol}_\epsilon)}^2 \geq \frac{h_n(\epsilon)^2}{4} \Vert f \Vert_{L^2(M, \text{vol}_\epsilon)}^2
$$
\end{proof}

We now need to bound $h_n(\epsilon)$ from below.  Let $ 
\{\gamma_1, \dots \gamma_d \}$ denote a basis of $\pi_1(S\setminus \Sigma)$, and for all $i\in \{1, \dots, d\}$, let $2\pi \theta_i$ denote the angle of the rotation of the holonomy along $\gamma_i$.

\begin{theorem}
\label{thm:Cheeger_const}
There exists $c>0$ such that, for all $n\in \N$ and for all $\epsilon >0$ the Cheeger-type constant $h_n(\epsilon)$ 
satisfies the lower bound
$$
h_n(\epsilon) \geq  c \sum_{i=1}^d d_{\R} (n\theta_i, \Z)\,.
$$
\end{theorem}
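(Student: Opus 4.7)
The plan is to reduce the bound to a length estimate on $\mathcal{S}\cap\pi^{-1}(\gamma_i)$ for each generator $\gamma_i$, convert length to area by a coarea integration over a family of parallel loops, and combine the $d$ generator contributions at the end. For each $\gamma_i$ I fix a simple smooth representative loop in $S\setminus\Sigma$ (bounded away from the finite set $\Sigma$) of length $L_i$, parameterized by arc-length. Using a parallel-transported fiber frame, $\pi^{-1}(\gamma_i)$ is identified with the twisted torus $[0,L_i]\times S^1$ under the gluing $(L_i,\xi)\sim(0,\xi+2\pi\theta_i)$. Since $\mathcal{S}$ is an $n$-cover of the base, I label its $n$ sheets continuously and lift them to $\tilde\xi_j:[0,L_i]\to\R$. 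The gluing forces $\tilde\xi_j(L_i)=\tilde\xi_{\sigma(j)}(0)+2\pi k_j-2\pi\theta_i$ for some permutation $\sigma$ and integers $k_j$, so the total displacement telescopes to $\sum_j[\tilde\xi_j(L_i)-\tilde\xi_j(0)]=2\pi(K-n\theta_i)$ with $K:=\sum_j k_j\in\Z$. In the $g_\epsilon$ metric the frame $\{X,Y,\epsilon\Theta\}$ is orthonormal, so each sheet has length $\int_0^{L_i}\sqrt{1+\dot{\tilde\xi}_j^2/\epsilon^2}\,dt\geq\epsilon^{-1}\bigl|\int_0^{L_i}\dot{\tilde\xi}_j\,dt\bigr|$; summing over $j$ and applying the triangle inequality gives
\[
\mathcal{H}^1_{g_\epsilon}\bigl(\mathcal{S}\cap\pi^{-1}(\gamma_i)\bigr)\;\geq\;\frac{1}{\epsilon}\Bigl|\sum_{j=1}^n\int_0^{L_i}\dot{\tilde\xi}_j\,dt\Bigr|\;\geq\;\frac{2\pi}{\epsilon}\,d_{\R}(n\theta_i,\Z).
\]

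To upgrade to an area bound I take a tubular neighborhood $U_i\subset S\setminus\Sigma$ of $\gamma_i$ of width $2\delta_i>0$, foliated by smooth parallel loops $\{\gamma_i^s\}_{s\in(-\delta_i,\delta_i)}$, each homotopic to $\gamma_i$ and hence with the same holonomy angle $2\pi\theta_i$. Applying the length bound to every $\gamma_i^s$ and invoking the coarea formula for the coordinate function $s:\mathcal{S}\cap\pi^{-1}(U_i)\to(-\delta_i,\delta_i)$, which is $1$-Lipschitz in $g_\epsilon$ because $g_\epsilon$ agrees with the flat metric $R$ in horizontal directions, I obtain
\[
\mathrm{Area}_\epsilon(\mathcal{S})\;\geq\;\int_{-\delta_i}^{\delta_i}\mathcal{H}^1_{g_\epsilon}\bigl(\mathcal{S}\cap\pi^{-1}(\gamma_i^s)\bigr)\,ds\;\geq\;\frac{4\pi\delta_i}{\epsilon}\,d_{\R}(n\theta_i,\Z).
\]
Since $\mathrm{Vol}_\epsilon(M)=(2\pi/\epsilon)\,\mathrm{Area}_R(S)$ and $\min\{\mathrm{Vol}_\epsilon(M'_\mathcal{S}),\mathrm{Vol}_\epsilon(M''_\mathcal{S})\}\leq\frac{1}{2}\mathrm{Vol}_\epsilon(M)$, dividing yields, for each $\gamma_i$ separately, a bound $h_n(\epsilon)\geq (4\delta_i/\mathrm{Area}_R(S))\,d_{\R}(n\theta_i,\Z)$ which is independent of $\epsilon$. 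Taking the maximum over $i\in\{1,\dots,d\}$ and using $\max_i\geq\frac{1}{d}\sum_i$ gives the stated inequality with $c=4(\min_i\delta_i)/(d\,\mathrm{Area}_R(S))>0$.

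The hard part will be the continuous labeling of sheets in the first step, which requires transversality of $\mathcal{S}$ to the fibers of $\pi$. Since $\mathcal{S}$ is a priori just an embedded hypersurface, transversality may fail on a measure-zero set, obstructing the sheet labeling. I plan to handle this either by a perturbation argument (in the variational characterization giving $h_n(\epsilon)$, the relevant hypersurfaces arise as generic level sets of smooth test functions and are therefore transverse to fibers for almost every level by Sard's theorem), or by reinterpreting the length estimate intrinsically as a total-variation bound on the indicator $\mathbf{1}_{M'_\mathcal{S}}$ in the fiber direction of $\pi^{-1}(\gamma_i)$, with the quantity $2\pi(K-n\theta_i)$ emerging as the topological charge of the $n$-cover. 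A secondary technicality is choosing representative loops $\gamma_i$ that are simple, smooth, and uniformly separated from $\Sigma$ so the tubular radii $\delta_i$ are positive; this is standard given that $\Sigma$ is finite.
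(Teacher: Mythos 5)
Your proof is correct and follows essentially the same strategy as the paper's: localize to a tubular neighborhood of each generator loop $\gamma_i$, extract the holonomy contribution $n\theta_i$ from the twist of $\pi^{-1}(\gamma_i)$ over $\mathcal S$, and convert a slice estimate into an area estimate by integrating in the transverse direction. The technical packaging differs. The paper cuts the lifted annulus $\hat A_i$ along an arc $\hat I_i$ to get a simply connected region, introduces a closed horizontal $1$-form $\hat V$ and an angle primitive $\theta$ with $d\theta=\hat\Theta$, and applies Stokes' theorem: the boundary integral reduces to the jump of $\theta$ across the cut, which is $2\pi n\theta_i$ modulo $2\pi\Z$. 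You instead bound the $g_\epsilon$-length of each fiber slice $\mathcal S\cap\pi^{-1}(\gamma_i^s)$ directly via the sheet lift $\tilde\xi_j$ and the triangle inequality (so the monodromy appears as a telescoping total displacement), then integrate over $s$ by the coarea formula. These two computations are essentially Fubini-dual to one another, but your version is more explicit about the $\epsilon$-scaling: you show cleanly that the factor $1/\epsilon$ from the fiber metric cancels the $1/\epsilon$ from the volume, whereas the paper drops the denominator somewhat abruptly. Your version also makes the monodromy constant $2\pi(K-n\theta_i)$ fully explicit, whereas the paper asserts the existence of $\hat V$ and $\theta$ without constructing them.

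Two minor points. First, both proofs implicitly assume $\pi|_{\mathcal S}$ is an honest covering of the annulus (so that the sheet lift, or equivalently the paper's $\hat A_i$ being an annulus and $\theta$ being a single-valued primitive, makes sense). You correctly flag this; it is handled in the Cheeger setting because the surfaces arise as Sard-generic level sets, but it is worth noting that the paper's definition of $\mathcal S_n(M)$ ("onto with fibers of cardinality $n$") does not by itself force local transversality — so the Sard perturbation really is needed, in both proofs. Second, your normalization $\mathrm{Vol}_\epsilon(M)=(2\pi/\epsilon)\mathrm{Area}_R(S)$ is the correct global computation; the paper's display $h_n(\epsilon)\geq\inf_{\mathcal S}\int_{\mathcal S}(\cdots)^{1/2}$ drops the volume denominator and is only correct up to a multiplicative constant, which your version supplies.
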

\begin{proof}
The area form of the metric $g_\epsilon$, defined by the orthonormality condition on $\{X, Y, \epsilon \Theta\}$, is 
\begin{equation}
 dA_\epsilon :=  \Big(\frac{1}{\epsilon^2}\vert\hat Y \wedge \hat \Theta\vert^2 + \frac{1}{\epsilon^2}\vert\hat X \wedge \hat \Theta\vert^2 + \vert\hat X \wedge \hat Y\vert^2 \Big )^{1/2}.
\end{equation}
and its volume is $dV_\epsilon = \epsilon ^{-1} dV$, hence for all $\epsilon >0$ we have
$$
\begin{aligned}
h_n(\epsilon) &\geq \inf_{\mathcal S \in \mathcal S_n(M)}
 \dfrac{\int_{\mathcal S} \Big(\vert\hat Y \wedge \hat \Theta\vert^2 + 
 \vert\hat X \wedge \hat \Theta\vert^2 \Big)^{1/2} }{\min\{\text{Vol}(M'_{\mathcal S}), \text{Vol}(M''_{\mathcal S}) \}} \\ &\geq 
\inf_{\mathcal S \in \mathcal S_n(M)}
 \int_{\mathcal S} \Big(\vert\hat Y \wedge \hat \Theta\vert^2 + \vert\hat X \wedge \hat \Theta\vert^2 \Big)^{1/2}  .
\end{aligned}
$$

For each $i\in \{1, \dots, d\}$ we consider a topological annulus $A_i$ in $S\setminus \Sigma$ which is a tubular neighborhood of the loop $\gamma_i$. Let us assume that the holonomy along $\gamma_i$ is a rotation with angle an irrational multiple of $2\pi$. Then for every surface $\mathcal S \in \mathcal S_n(M)$, the inverse image of $A_i$ in $\mathcal S$ (under the restriction to $\mathcal S$ of the canonical 
projection) is an annulus $\hat A_i \subset \mathcal S$, which is an $n$-cover of the annulus $A_i$. Let $\hat \gamma_i$ a core curve of the annulus $\hat A_i$ which projects onto $\gamma_i$.
Let $\hat I_i$ denote an arc transverse to $\hat \gamma_i$
such that $\hat A_i \setminus \hat I_i$ is simply connected.

The boundary of $\hat A_i \setminus \hat I_i$ is the union 
of $(\partial \hat A_i) \setminus (\hat I_i \cap \partial \hat A_i) $ and of the arcs
$\hat I_i^\pm$ which both map to $\hat I_i$ under the inclusion
of $\hat A_i \setminus \hat I_i$ into $M$.
It can be proved that there exist a closed $1$-form $\hat V$ such that $\int_{\hat I_i} \hat V>0$ which in addition is horizontal, that is $\hat V(\Theta)=0$, and a function $\theta: \hat A_i \setminus \hat I_i \to \R$ such that $\hat \Theta = d\theta$ on $\hat A_i \setminus \hat I_i$  and $\int_{ \partial \hat A_i}  \theta \hat V =0$. 

We note that for any pair of points
$p^\pm \in \hat I^\pm_i$ such that $p^\pm$ have the same image
in $\hat I_i \subset M$ under the inclusion map, we have that 
$\theta(p^+)-\theta(p^-) = \pm  2\pi n \theta_i $ modulo $2\pi \Z$. We then have that there exists a constant $c_i>0$ such that
$$
\begin{aligned}
\int_{\hat A_i} \Big(\vert\hat Y \wedge \hat \Theta\vert^2 + \vert\hat X \wedge &\hat \Theta\vert^2 \Big)^{1/2}  \geq c_i\Big\vert \int_{\hat A_i \setminus \hat I_i} \hat V \wedge d\theta \Big\vert =
c_i\Big\vert \int_{\partial (\hat A_i \setminus \hat I_i)} \theta\hat V  \Big\vert \\ & = c_i\Big\vert \int_{\hat I^+_i} \theta\hat V  - \int_{\hat I^-_i} \theta\hat V  \Big\vert=
 2\pi c_id_{\R} (n \theta_i, \Z) \int_{\hat I_i} \hat V\,.
\end{aligned}
$$
The statement of the lemma follows since the above inequality
holds for all $i\in \{1, \dots, d\}$.
\end{proof}

\begin{proof} [Proof of Theorem \ref{thm:sdestimate}] The
theorem follows from the limit in formula \eqref{eq:eigen_limit}, from the Cheeger-type lower bound for the eigenvalues in Theorem \ref{thm:Cheeger}, from the lower bound on the Cheeger constant in Theorem \ref{thm:Cheeger_const} and from the Definition \ref{def:Dioph_cond} of simultaneously Diophantine holonomy.

\end{proof}

As a corollary of the Cheeger constant estimate, we can show that the partial Laplacian $H$ has solutions $Hu = f$ if $f$ has zero average and is sufficiently regular in the fiber direction.

\begin{theorem}
\label{thm:H_apriori_est}
If the holonomy of the flat surface $(S,R)$ satisfies a simultaneous Diophantine  condition of exponent $\gamma >0$ (see Definition \ref{def:Dioph_cond}), then  for any zero-average function $f \in H^{r,s}(M)$ with $r\geq 0$ and $s \geq 2\gamma $, there is a unique zero-average solution $u \in H^{r,s- 2\gamma}(M)$ to the equation 
\[
Hu = f.
\]
In addition, for every $r\geq 0$ and $s \geq 2\gamma $ there exists a constant $C_{r,s}>0$ such that we have
$$
\vert u \vert_{r, s-2\gamma} \leq  C_{r,s} \vert  f \vert_{r,s}\,.
$$
 \label{Hbounded}
\end{theorem}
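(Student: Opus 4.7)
The plan is to Fourier-decompose $f$ with respect to the rotation flow $\phi^\Theta_{\R}$, solve the equation mode by mode using the Cheeger-type bound of Theorem \ref{thm:sdestimate}, and reassemble via the norm equivalence $\vert v\vert_{r,s}^2 \asymp \sum_{n\in \Z} (1+n^{2s}) \vert\pi_n(v)\vert_{r,0}^2$.

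First, observe that $H$ commutes with $\Theta$ (from \eqref{eq:comm_rel_main} one has $[\Theta, X^2+Y^2] = 0$) and hence preserves each eigenspace $E_n$. Writing $f = \sum_n f_n$ with $f_n = \pi_n(f) \in E_n$, for $n \neq 0$ Theorem \ref{thm:sdestimate} yields $\lambda_n(H) \geq C/n^{2\gamma}$, so $H$ is boundedly invertible on $E_n$ with operator norm of its inverse at most $n^{2\gamma}/C$; set $u_n := H^{-1} f_n$. For $n = 0$, the local formulas \eqref{eq:loc_cone} show that $H$ restricted to $E_0$ (functions pulled back from the base $S$) agrees with the flat Laplacian $-\Delta_S$ on $S$; since $f_0$ has zero average and $-\Delta_S$ has positive first nonzero eigenvalue on a compact flat surface with conical singularities, one can uniquely solve $-\Delta_S u_0 = f_0$ among zero-average functions. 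Setting $u := \sum_n u_n$ produces the desired solution, and uniqueness among zero-average $L^2$ functions follows from the same positivity of eigenvalues on each mode.

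To establish $\vert u\vert_{r,s-2\gamma} \leq C_{r,s} \vert f\vert_{r,s}$, by the norm equivalence it suffices to prove
\[
\vert u_n\vert_{r,0} \leq C_r (1+\vert n\vert)^{2\gamma} \vert f_n\vert_{r,0} \qquad \text{for every } n \in \Z,
\]
since then, for $s \geq 2\gamma$, the elementary inequality $(1+n^{2(s-2\gamma)})(1+n^{4\gamma}) \lesssim (1+n^{2s})$ combined with summation in $n$ yields the required estimate. To prove this mode-wise bound, apply $X^i Y^j$ with $i + j \leq r$ to $H u_n = f_n$; since $[X,Y] = 0$ gives $[H, X] = [H, Y] = 0$, one obtains $H (X^i Y^j u_n) = X^i Y^j f_n$. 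Expressing $X, Y$ in terms of $\partial^\pm$ and applying Lemma \ref{lemma:plus_minus}, decompose $X^i Y^j u_n = \sum_m w_m$ with $w_m \in E_m$ and the support $\vert m - n\vert \leq i + j \leq r$. Since $H$ commutes with $\pi_m$, each piece solves $H w_m = \pi_m(X^i Y^j f_n)$; for $m \neq 0$ the Cheeger bound gives $\Vert w_m\Vert \leq (\vert m\vert^{2\gamma}/C) \Vert \pi_m(X^i Y^j f_n)\Vert$, while for $m = 0$ the component $\pi_0(X^i Y^j f_n)$ has zero average (by skew-adjointness of $X$ and $Y$) and the flat Laplacian estimate provides a bound with an absolute constant. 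Since $\vert m\vert \leq \vert n\vert + r$ throughout, all prefactors are dominated by $C'(1+\vert n\vert)^{2\gamma}$; summing in $m$ using orthogonality of the $E_m$ decomposition and then over $i + j \leq r$ produces the claim.

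The main obstacle is the zeroth-mode case $n = 0$, which reduces to the cohomological equation for the flat Laplacian $-\Delta_S$ on the compact surface $S$ with conical singularities; invoking the standard elliptic theory---discreteness of the spectrum, positivity of the spectral gap on zero-average functions, and horizontal Sobolev estimates---delivers the requisite bound $\vert u_0\vert_{r,0} \leq C_r \vert f_0\vert_{r,0}$. Once this is in hand, the remainder is a routine interplay between the Cheeger-type bound of Theorem \ref{thm:sdestimate} and the commutation of $H$ with $X$, $Y$, and $\Theta$.
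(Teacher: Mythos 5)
Your proposal is correct and follows essentially the same route as the paper: decompose with respect to the $\Theta$-eigenspaces $E_n$, invert $H$ on each $E_n$ using the Cheeger-type lower bound of Theorem~\ref{thm:sdestimate} for $n\neq 0$ and the spectral gap of the flat Laplacian on $S$ for the $n=0$ mode, and reassemble via the equivalence of $\vert\cdot\vert_{r,s}$ with the weighted sums of $\vert \pi_n(\cdot)\vert_{r,0}$. The only minor difference is in the treatment of the horizontal derivatives: the paper expands in a joint eigenbasis $\{\phi_{n,k}\}$ of $H$ and $\Theta$ and absorbs the $r$-derivatives into the norms $\vert\phi_{n,k}\vert_{r,0}$, whereas you commute $X^iY^j$ through the equation and re-solve mode by mode, which is an equally valid (and comparably detailed) way to handle $r>0$.
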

\begin{proof}
For every $n \in \Z$, let $\{\phi_{n,k}\}_{k\in \N}$ denote a orthonormal basis of eigenfunction of $H$ on the space $E_n$ and let $\{\lambda_{n,k}\}_{k\in \N}$ be the corresponding sequence of eigenvalues such that 
$$
H \phi_{n,k} = \lambda_{n,k} \phi_{n,k}\,.
$$
Let us assume that $\phi_{0,0}\equiv 1 \in E_0$. We can then write, for any $f \in L^2(M, \text{vol}_M)$, 
$$
f = \sum_{n\in \Z} \sum_{k>0} f_{n,k} \phi_{n,k}\,.
$$
We note that by the definition of the Sobolev norms, for 
every $r,s \geq 0$ there exists a constant $K_{r,s}>0$ such that,
for all $n\in \Z$ and $k\geq 0$ we have
$$
K_{r,s}^{-1}  (1+ n^2)^s \vert \phi_{n,k} \vert^2_{r,0} \,\leq\,  \vert \phi_{n,k} \vert^2_{r,s} \, \leq \, K_{r,s} (1+ n^2)^s \vert \phi_{n,k} \vert^2_{r,0} 
$$
The unique formal solution $u$ of zero average is then given by the formula
$$
u:=\sum_{n\in \Z} \sum_{k>0} f_{n,k} \lambda_{n,k}^{-1} \phi_{n,k}\,.
$$
We note that since the operator $H$ on $E_0$ is unitarily equivalent to the (non-negative) flat Laplacian on $S$ (which is elliptic and therefore has a spectral gap), it follows that there exists a constant $\lambda_S>0$ such that 
$$
\lambda_{0, k}  \geq \lambda_S >0\,, \quad \text{ for all } k>0\,.
$$
By the hypothesis of simultaneously Diophantine holonomy and
by Theorem \ref{thm:sdestimate} we derive that there exist
constants $C_s$ and $C'_s$ (also depending on $(S,R)$) such that
$$
\begin{aligned}
\vert u \vert^2_{r, s-2\gamma} &\leq C_s \sum_{n\in \Z} \sum_{k>0}  (1+ n^2)^{s-2\gamma} \vert f_{n,k} \vert^2 \vert \lambda_{n,k}\vert ^{-2} \vert \phi_{n,k} \vert^2_{r,0}
\\ &\leq  C_s \sum_{n\in \Z} \sum_{k>0}  (1+ n^2)^{s-2\gamma} \vert f_{n,k} \vert^2 \vert (1+ n^2)^{2\gamma} \vert \phi_{n,k} \vert^2_{r,0} \leq C'_s \vert f \vert^2_{r,s}\,.
\end{aligned}
$$

\end{proof}

\section{The cohomological equation}
\label{sec:CE}

In this section we finally derive our main results on solutions
of the cohomological equation for the flat geodesic flow and
its rotations.

\subsection{Distributional and Smooth Solutions}
\label{ssec:solutions}

In this subsection we prove a priori bounds for the Lie derivative operator $X$ of the flat geodesic flow on smooth functions and derive a result on distributional and 
smooth solutions of the cohomological equation.

The following a priori estimate for the foliated Cauchy-Riemann
derivatives in terms of the geodesic flow derivative holds.

\begin{lemma} 
\label{lemma:CR_apriori}
For every $r\geq 0$ and every $s > s'+ 3/2$ there exists a constant  $C_{r,s,s'}>0$ such that the following holds. 
For any $v \in H^{r+1,0}(M)$ such $Xv \in H^{r,s}(M)$ we have
that $\partial^\pm v \in H^{r,s'}(M)$ and 
$$
\vert \partial^\pm v \vert_{r,s'} \leq C_{r,s,s'} \vert Xv \vert_{r,s} \,.
$$
\end{lemma}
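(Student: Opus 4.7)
The plan is to reduce the estimate to a scalar recursion indexed by the fiber Fourier modes $E_n$ of the circle action $\phi^\Theta_\R$. Decompose $v = \sum_{n\in\Z} v_n$ with $v_n = \pi_n(v)$, and set $f := Xv$ and $f_m := \pi_m(f)$. Since $2X = \partial^+ + \partial^-$ and $\partial^\pm : E_k \cap H^{1,0}(M) \to E_{k\mp 1}$ by Lemma~\ref{lemma:plus_minus}, projecting $Xv = f$ onto $E_m$ yields the key identity
\[
\partial^+ v_{m+1} + \partial^- v_{m-1} = 2 f_m, \qquad m \in \Z.
\]
By Lemma~\ref{lemma:CR_identity} applied to each $v_n \in H^{1,0}(M)$, the quantities $\|\partial^+ v_n\|_{L^2}$ and $\|\partial^- v_n\|_{L^2}$ coincide; call their common value $a_n$. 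The triangle inequality on the key identity gives the discrete differential inequality $|a_{m+1} - a_{m-1}| \leq 2\|f_m\|_{L^2}$.

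Since $\partial^+ v \in L^2(M)$ and the $\partial^+ v_n \in E_{n-1}$ are mutually orthogonal across $n$, we have $\sum_n a_n^2 < \infty$ and hence $a_n \to 0$ as $|n|\to\infty$. Telescoping the recursion separately along the even and odd subsequences of indices, and using the decay at infinity, produces a bound of the form $a_n \leq 2 \sum_{k \in I_n} \|f_k\|_{L^2}$, where $I_n$ is a tail (in the direction of $\mathrm{sgn}(n)\cdot\infty$) of indices of parity opposite to $n$. A weighted Cauchy-Schwarz against $(1+k^{2s})$ then yields $a_n^2 \lesssim (1+n^2)^{1/2 - s}\,\vert f\vert_{0,s}^2$ for $s > 1/2$. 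Summing $\sum_n (1+n^{2s'}) a_n^2$ via the Fourier characterization $\vert \partial^\pm v\vert_{0,s'}^2 \asymp \sum_n (1+n^{2s'}) \|\partial^\pm v_{n\pm 1}\|_{L^2}^2$ then gives $\vert \partial^\pm v\vert_{0,s'} \lesssim \vert Xv\vert_{0,s}$; the hypothesis $s > s' + 3/2$ leaves room to absorb both the $1/2$ lost in the Cauchy-Schwarz step and the amount needed for $\ell^2$-summability of the resulting weighted series in $n$.

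For $r \geq 1$, the crucial point is that $X$ and $Y$ commute with $\partial^\pm$ (since $[X,Y] = 0$). Applying $X^i Y^j$ for $i + j \leq r$ to the key identity produces the same system with $v$ replaced by $X^iY^jv$ and $f$ replaced by $X^iY^jf$. The hypothesis $v \in H^{r+1,0}(M)$, combined with Lemma~\ref{lemma:CR_identity}, ensures that the quantities $\tilde a_{n,i,j} := \|\partial^+ X^iY^j v_n\|_{L^2} = \|\partial^- X^iY^j v_n\|_{L^2}$ are square summable in $n$, so the telescoping argument applies verbatim at each $(i,j)$; summing over $i+j \leq r$ via $\vert\cdot\vert_{r,s'}^2 \asymp \sum_n (1+n^{2s'}) \vert \pi_n(\cdot)\vert_{r,0}^2$ yields the full estimate. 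The main obstacle is that $X$ and $Y$ do not preserve the individual $\Theta$-eigenspaces $E_n$, so the geodesic generator $X$ cannot be diagonalized directly inside the Fourier decomposition. The remedy, implemented above, is to commute all horizontal derivatives past $\partial^\pm$ \emph{before} passing to Fourier modes, reducing every step to the scalar recursion already analyzed at the $L^2$ level—at the mild cost of requiring $v \in H^{r+1,0}(M)$ rather than just $H^{r,0}(M)$, which is precisely the regularity assumed in the statement.
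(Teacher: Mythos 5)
Your $r=0$ argument is essentially the paper's: the same projected identity $\partial^+v_{n+1}+\partial^-v_{n-1}=2f_n$, the same use of Lemma~\ref{lemma:CR_identity} to equate $\Vert\partial^+v_n\Vert$ and $\Vert\partial^-v_n\Vert$, and the same one-sided telescoping justified by square-summability of the tail. The only cosmetic difference is that you estimate the tail sum $\sum_{k}\Vert f_k\Vert$ by Cauchy--Schwarz against the weights $(1+k^{2s})$, whereas the paper uses the pointwise mode bound $\Vert f_n\Vert\leq \vert f\vert_{0,s}(1+n^2)^{-s/2}$; your version in fact closes at $s>s'+1$, so the stated hypothesis $s>s'+3/2$ is more than enough. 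Where you genuinely diverge is the passage to $r\geq 1$: the paper commutes powers of the foliated Laplacian $H$ through the equation (using $[H,X]=0$), obtains the even-order cases, and interpolates for odd $r$, while you commute the monomials $X^iY^j$ directly through $\partial^\pm$ (legitimate, since $[X,\partial^\pm]=[Y,\partial^\pm]=0$) and apply the $r=0$ estimate to each $X^iY^jv$. Your route avoids both the restriction to even orders and the interpolation step, which is a real simplification. The one point you should make explicit is the norm bookkeeping it requires: your recursion at level $(i,j)$ controls $\Vert\partial^\pm\pi_m(X^iY^jv)\Vert$, whereas the Fourier characterization of $\vert\cdot\vert_{r,s'}$ quoted from the paper involves $X^iY^j$ applied \emph{after} the projections, i.e.\ $\Vert X^iY^j\pi_n(\partial^\pm v)\Vert$. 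Since $X$ and $Y$ do not preserve the $E_n$, these do not coincide termwise; one must check that $\vert w\vert_{r,s'}^2$ is comparable to $\sum_{i+j\leq r}\vert X^iY^jw\vert_{0,s'}^2$ (commuting $\Theta^l$ past $X^iY^j$ via $[\Theta,X]=Y$, $[\Theta,Y]=-X$, which preserves total order) and that each $\pi_n(X^iY^jw)$ is a sum of at most $2(i+j)+1$ terms $\pi_n(X^iY^jw_m)$ with $\vert n-m\vert\leq i+j$, so the weighted $\ell^2$ sums in $n$ are comparable. The same remark applies to your implicit use of $\vert X^iY^jf\vert_{0,s}\lesssim\vert f\vert_{r,s}$. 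These are routine commutator estimates, but they are the price of your reordering and should be stated.
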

\begin{proof}
The equation  $Xv=f$, for $v \in H^{1,0}(M)$, is equivalent to the sequence of equations 
\begin{equation}
\label{eq:sol_id}
\partial^+ v_{n+1} + \partial^- v_{n-1} = 2f_n \,, \quad \text{ for all }n\in \Z\,.
\end{equation}
By taking into account that by Lemma \ref{lemma:CR_identity}
we have $\Vert \partial^+ v\Vert_{L^2(M, \text{vol}_M)} = \Vert \partial^- v\Vert_{L^2(M, \text{vol}_M)}$ for all $v\in H^{1,0}(M)$, it follows from the identity \eqref{eq:sol_id} that, for all $n\in \N$,
$$
\begin{aligned}
\Vert \partial^- v_{n-1} \Vert_{L^2(M, \text{vol}_M)} &\leq  2\Vert f_n \Vert_{L^2(M, \text{vol}_M)}  + \Vert \partial^+ v_{n+1}\Vert_{L^2(M, \text{vol}_M)}  \\ &= 2\Vert f_n \Vert_{L^2(M, \text{vol}_M)}  + \Vert \partial^- v_{n+1}\Vert_{L^2(M, \text{vol}_M)}\,,
\end{aligned}
$$
hence by induction
$$
\Vert \partial^- v_{n-1} \Vert_{L^2(M, \text{vol}_M)}
\leq 2\sum_{k=0}^{K} \Vert f_{n+2k} \Vert_{L^2(M, \text{vol}_M)}
+ \Vert \partial^- v_{n+2K+1} \Vert_{L^2(M, \text{vol}_M)}\,.
$$
Since  $\partial^- v\in L^2(M, \text{vol}_M)$ as  $v\in H^{1,0}(M)$
and 
$$
 \Vert \partial^- v\Vert^2_{L^2(M, \text{vol}_M)} = \sum_{n\in \Z}  \Vert \partial^- v_n\Vert^2_{L^2(M, \text{vol}_M)}\,,
 $$
it follows that $\Vert \partial^- v_{n+2K+1} \Vert_{L^2(M, \text{vol}_M)} \to 0$ as $K\in \N$ diverges. We therefore have the estimate
\begin{equation}
\label{eq:recursive_est_1}
\Vert \partial^- v_{n-1} \Vert_{L^2(M, \text{vol}_M)}
\leq 2\sum_{k=0}^{+\infty} \Vert f_{n+2k} \Vert_{L^2(M, \text{vol}_M)}
\end{equation}
Similarly, we have 
$$
\begin{aligned}
\Vert \partial^+ v_{n+1} \Vert_{L^2(M, \text{vol}_M)} &\leq  2\Vert f_n \Vert_{L^2(M, \text{vol}_M)}  + \Vert \partial^- v_{n-1}\Vert_{L^2(M, \text{vol}_M)}  \\ &= 2\Vert f_n \Vert_{L^2(M, \text{vol}_M)}  + \Vert \partial^+ v_{n-1}\Vert_{L^2(M, \text{vol}_M)}\,,
\end{aligned}
$$
hence
\begin{equation}
\label{eq:recursive_est_2}
\Vert \partial^+ v_{n+1} \Vert_{L^2(M, \text{vol}_M)}
\leq 2\sum_{k=0}^{+\infty} \Vert f_{n-2k} \Vert_{L^2(M, \text{vol}_M)}
\end{equation}
Let us assume that $f \in H^{0,s}(M)$. It follows that 
$$
\Vert f_n \Vert_{L^2(M, \text{vol}_M)} \leq \vert f \vert_{0,s}
(1+n^2)^{-s/2} \,,
$$
hence by estimates \eqref{eq:recursive_est_1} for $n\geq 0$ and
\eqref{eq:recursive_est_2} for $n\leq 0$ we derive that, for 
all $ s>1$ there exists a constant $C_s >0$ such that 
$$
\Vert \partial^+ v_n \Vert_{L^2(M, \text{vol}_M)} =
\Vert \partial^- v_n \Vert_{L^2(M, \text{vol}_M)} \leq 
C_s \vert f \vert_{0,s} (1+n^2)^{-(s-1)/2}\,.
$$
so that, for $s>3/2$, there exists a constant $C'_s>0$ such that
$$
\Vert \partial^+ v \Vert_{L^2(M, \text{vol}_M)} =
\Vert \partial^- v\Vert_{L^2(M, \text{vol}_M)} \leq C'_s 
\vert f \vert_{0,s}\,.
$$
In fact, more generally for every $s'\geq 0$ and for $s > s' + 3/2$ there exists a constant $C_{s,s'}>0$ such that
$$
\vert \partial^+ v \vert_{0,s'} =
\vert \partial^- v \vert_{0,s'} \leq C_{s,s'} 
\vert f \vert_{0,s}\,.
$$
Since $[H, X]=0$ on $H^{3,0}(M)$, it follows that under the
assumption that $ v \in H^{2k+1,0}(M)$ we have
$$
X (H^k v) = H^k (Xv) = H^k (f)
$$
so that,  if  $f \in H^{2k, s}(M) $ with $s > s' +3/2$, we have
$$
\vert H^k (\partial^\pm v) \vert_{0,s'} =
\vert  \partial^\pm H^k(v)\vert_{0,s'} \leq C_{s,s'} \vert f \vert_{2k,s} \,,
$$
which immediately implies the conclusion for $r=2k$ even.
The statement for odd $r=2k+1$ follows by interpolation.
\end{proof}

\begin{theorem}
\label{thm:aprioriest_smooth}
Let us assume that the holonomy of the flat surface $(S,R)$ satisfies a simultaneous Diophantine  condition of exponent $\gamma >0$.  Given $r',s'\geq 0$ and for every $r >r' +1$ and $s>s' + 2\gamma +3/2$, there exists a constant $C:= C(r,r's,s',\gamma)>0$ such that, for all zero average functions $v \in H^{r,0}(M)$ with $Xv \in H^{r,s}(M)$ we have that 
\begin{equation}
\vert v\vert _{r',s'} \leq C \vert X v\vert _{r,s} \,.
\end{equation}
\end{theorem}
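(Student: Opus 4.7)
The plan is to derive the estimate by chaining the two \emph{a priori} bounds already at our disposal: Lemma \ref{lemma:CR_apriori} controls $\partial^\pm v$ in terms of $Xv$ at the cost of $3/2$ fiber derivatives, and Theorem \ref{thm:H_apriori_est} inverts the horizontal Laplacian $H$ at the cost of $2\gamma$ fiber derivatives. The bridge between them is the algebraic identity
\[
H = -(X^2+Y^2) = -\partial^+\partial^-,
\]
which is immediate from the definitions $\partial^\pm = X \pm iY$ together with $[X,Y]=0$. The idea is: once $Xv$ is known, the foliated Cauchy--Riemann estimate recovers $\partial^- v$ in an intermediate Sobolev class; applying $\partial^+$ then recovers $Hv$; and the Cheeger-type inversion of $H$ finally recovers $v$ itself.

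To execute this, I would set $f:=Xv \in H^{r,s}(M)$ and fix the intermediate fiber index $s_1 := s' + 2\gamma$. The hypothesis $s > s'+2\gamma + 3/2$ reads $s > s_1 + 3/2$, so Lemma \ref{lemma:CR_apriori} gives
\[
\vert \partial^\pm v\vert_{r,s_1} \leq C_1 \vert f\vert_{r,s}.
\]
Since $\partial^+ : H^{r,s_1}(M) \to H^{r-1,s_1}(M)$ is bounded (the fiber index is preserved, up to a harmless shift, by the commutation $[\Theta,\partial^\pm]=\mp i\partial^\pm$), the identity above yields $\vert Hv\vert_{r-1,s_1} \leq C_2 \vert f\vert_{r,s}$. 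Now $v$ is zero-average and therefore so is $Hv$ (by skew-symmetry of $X,Y$), and $s_1 \geq 2\gamma$ since $s'\geq 0$, so Theorem \ref{thm:H_apriori_est} produces a unique zero-average $u \in H^{r-1,s_1-2\gamma}(M)$ solving $Hu = Hv$; by uniqueness $u = v$, so
\[
\vert v\vert_{r-1,s'} = \vert v\vert_{r-1,\, s_1 - 2\gamma} \leq C_3 \vert Hv\vert_{r-1,s_1} \leq C_4 \vert Xv\vert_{r,s}.
\]

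This proves the bound at integer order $r' = r-1$; the strict hypothesis $r > r'+1$ in the theorem then allows standard complex interpolation in the horizontal Sobolev exponent to cover every admissible $r' \geq 0$, including fractional values. I do not expect any serious obstacle beyond the algebraic identity $H = -\partial^+\partial^-$: the genuinely hard analytic work---the small-denominator control of the spectral gap of $H$ on each $E_n$ via the simultaneous Diophantine condition, and the recursive summation of the Cauchy--Riemann identities along Fourier modes in $\theta$---has already been carried out in Theorem \ref{thm:H_apriori_est} and Lemma \ref{lemma:CR_apriori}, respectively. The Diophantine exponent $\gamma$ enters the final bound precisely through the $2\gamma$ loss in the inversion of $H$, while the $3/2$ fiber loss tracks the convergence of the Cauchy--Riemann recursion in the fiber Fourier expansion.
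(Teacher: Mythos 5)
Your proposal is correct and follows essentially the same route as the paper: Lemma \ref{lemma:CR_apriori} to control $\partial^\pm v$, the identity $H=-\partial^+\partial^-$ to pass to $Hv$ at the cost of one horizontal derivative, and Theorem \ref{thm:H_apriori_est} (with the uniqueness of the zero-average solution) to recover $v$. The only cosmetic difference is bookkeeping: since Lemma \ref{lemma:CR_apriori} at horizontal index $r$ formally requires $v\in H^{r+1,0}(M)$, one should apply it one index lower (the paper applies it at index $r'+1\leq r-1$), which makes your closing interpolation step unnecessary.
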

\begin{proof}
By Lemma \ref{lemma:CR_apriori} we have
$$
\vert \partial^\pm v \vert_{r'+1,s'+2\gamma} \leq C_{r,s,s'} \vert Xv \vert_{r,s}\,,
$$
hence
$$
\vert H v \vert_{r',s'+2\gamma} \leq C_{r,s,s'} \vert Xv \vert_{r,s}\,.
$$
By Theorem \ref{thm:H_apriori_est} under the assumption that 
$v$ has zero average we have, for any $r',s'\geq 0$ we derive the estimate
$$
\vert v \vert_{r', s'} \leq C_{r',s'} \vert H v \vert_{r',s'+2\gamma} \leq C_{r,s,s'} C_{r,s,s'+2\gamma} \vert Xv \vert_{r,s}\,,
$$
as stated.
\end{proof}

For all $r,s \geq 0$, let $\mathcal I^{r,s}_X(M) \subset 
H^{-r,-s}(M)$ denote the space of all $X$-invariant distributions:
$$
\mathcal I^{r,s}_X(M):=\{ D \in H^{-r,-s}(M) \vert XD=0\}\,.
$$
We recall that the distributional equation $XD=0$ for a distribution $D\in H^{-r,-s}(M)$ is by definition equivalent to the condition 
$$
D(Xv) = 0 \,, \quad \text{ for all } v\in H^{r+1,s}(M) \,.
$$
We prove below our main theorem about distributional and smooth solutions of the cohomological equation.

\begin{theorem}
\label{thm:CE_dist_sol}
Let us assume that the holonomy of the flat surface $(S,R)$
satisfies a simultaneous Diophantine  condition of exponent $\gamma >0$.  Given $r',s'\geq 0$ and for every $r >r' +1$ and $s>s' + 2\gamma +3/2$, there exists a constant $C:= C(r,r's,s',\gamma)>0$ such that, for any distribution  $F \in H^{-r',-s'}(M)$ vanishing on constant functions there exists a solution $U\in H^{-r,-s}(M)$ of the cohomological equation $XU =F$ such that 
\begin{equation}
\vert U\vert _{-r,-s} \leq C \vert F\vert _{-r',-s'} \,.
\end{equation}
\end{theorem}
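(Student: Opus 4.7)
The plan is to deduce this theorem from the a priori estimate of Theorem \ref{thm:aprioriest_smooth} via a Hahn--Banach duality argument, which is the standard transpose of an a priori estimate into a distributional existence result (as in the scheme used for translation surfaces in \cite{Forni1997}). The candidate solution $U$ will be constructed as the unique continuous extension of the linear functional $Xv \mapsto F(v)$ defined on the image of $X$ applied to zero-average smooth functions.

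Concretely, I would introduce the linear subspace
\[
V \;:=\; \bigl\{\, Xv \;:\; v \in C^\infty_\Sigma(M),\ \textstyle\int_M v\, d\text{vol}_M = 0 \,\bigr\} \;\subset\; H^{r,s}(M)
\]
and the linear functional $L\colon V \to \C$ defined by $L(Xv) := F(v)$. The first point is that $L$ is well defined: if $Xv_1 = Xv_2$ for two smooth zero-average functions, then $X(v_1 - v_2) = 0$, and Corollary \ref{cor:no_smooth_inv} — applicable because the simultaneous Diophantine hypothesis implies non-rational holonomy — forces $v_1 - v_2$ to be constant, hence zero by the zero-average condition. The second point is boundedness in the $H^{r,s}$-norm: for any zero-average $v \in C^\infty_\Sigma(M)$, Theorem \ref{thm:aprioriest_smooth} yields $\vert v \vert_{r',s'} \leq C\,\vert Xv \vert_{r,s}$, and consequently
\[
|L(Xv)| \;=\; |F(v)| \;\leq\; \vert F \vert_{-r',-s'}\,\vert v \vert_{r',s'} \;\leq\; C\,\vert F \vert_{-r',-s'}\,\vert Xv \vert_{r,s}.
\]
Thus $L$ is a bounded functional on $V$ with operator norm at most $C\,\vert F \vert_{-r',-s'}$, and by Hahn--Banach it extends to a continuous linear functional $U$ on all of $H^{r,s}(M)$ with the same norm, producing the desired element $U \in H^{-r,-s}(M)$ together with the stated estimate.

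It remains to verify $XU = F$ in the distributional sense of the excerpt, namely that $(XU)(w) = U(Xw) = F(w)$ for all test functions $w$. For $w \in C^\infty_\Sigma(M) \cap H^{r+1,s}(M)$, let $\bar w$ denote its mean over $M$; since $X(w - \bar w) = Xw$, since $w - \bar w$ is a smooth zero-average function, and since $F$ annihilates constants, one computes
\[
(XU)(w) \;=\; U(Xw) \;=\; U\bigl(X(w - \bar w)\bigr) \;=\; L\bigl(X(w - \bar w)\bigr) \;=\; F(w - \bar w) \;=\; F(w),
\]
and the identity extends to all $w \in H^{r+1,s}(M)$ by density together with continuity of $U$ on $H^{r,s}$ and of $F$ on $H^{r',s'}$ (the inclusion $H^{r+1,s}(M) \hookrightarrow H^{r',s'}(M)$ being guaranteed by $r+1 > r'$ and $s > s'$).

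The argument is essentially functional-analytic once Theorem \ref{thm:aprioriest_smooth} is available, so no new hard analysis is required here; the potential obstacle has already been absorbed into the a priori estimate. The one point meriting care is the well-definedness of $L$ on the quotient by $\ker X$, which is precisely Corollary \ref{cor:no_smooth_inv}, and the mild bookkeeping needed to ensure that all index relations between $(r,s)$ and $(r',s')$ imposed by Theorem \ref{thm:aprioriest_smooth} are inherited at the dual level.
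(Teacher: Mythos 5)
Your proposal is correct and follows essentially the same route as the paper: both transpose the a priori estimate of Theorem \ref{thm:aprioriest_smooth} into a bound for the functional $Xv \mapsto F(v)$ on the range of $X$ and then extend by Hahn--Banach (the paper extends by zero on the orthogonal complement of the closed range, which is the same thing). The only cosmetic differences are that you secure well-definedness via Corollary \ref{cor:no_smooth_inv} and mean subtraction, whereas the paper gets it implicitly from the a priori estimate together with $F$ vanishing on constants, and an immaterial sign convention in the definition of $XU$.
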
 
\begin{proof}
Given a distribution $F\in H^{-r',-s'}(M)$ vanishing on constant function, we can formally define a solution $U\in H^{-r,-s}(M)$ on the range of the Lie derivative operator by the formula
$$
\langle U, Xv \rangle = -\langle F, v\rangle\,, \quad \text{ for all } v \in H^{r+1,s}(M)\,.
$$
By Theorem~\ref{thm:aprioriest_smooth} we have, for all $v\in H^{r+1,s}(M)$, 
$$
\vert \langle U, Xv \rangle \vert = \vert \langle F, v\rangle\vert \leq \vert F\vert_{-r',-s'} \vert v \vert_{r',s'}
\leq C_{r,s,r',s'} \vert F\vert_{-r',-s'} \vert Xv \vert_{r,s}\,,
$$
hence the formal solution $U$ extends by continuity (since the Sobolev space $H^{r,s}(M)$ is complete) to a bounded linear
functional on the closed subspace 
\begin{equation}
\label{eq:R_rs}
R_{r,s}(X):= \overline{ \{ f \in H^{r,s}(M) \vert f =Xv  \text{ with } v \in C^\infty_\Sigma(M)\}}^{ H^{r,s}(M)} \,,
\end{equation}
and to a bounded linear functional on $H^{r,s}(M)$ defined
as $0$ on the orthogonal complement $R_{r,s}(X)^\perp \subset H^{r,s}(M)$.
It follows by its definition that the distribution 
$U \in H^{-r,-s}(M)$ is a solution of the cohomological equation $XU=F$ and satisfies the stated bound.
\end{proof}

\begin{theorem}
\label{thm:CE_smooth_sol}
Let us assume that the holonomy of the flat surface $(S,R)$ satisfies a simultaneous Diophantine  condition of exponent $\gamma >0$. Given  $r>r'+1 \geq 2$ and $s > s' + 3/2+2\gamma \geq 3/2+2\gamma$, there exists a constant $C_{r,s,r',s'}>0$
such that the following holds.  For every $f \in H^{r,s}(M)$
such that $f \in \text{\rm Ker} (\mathcal I^{r,s}_X(M) )$, there
exists a unique zero average solution $u\in H^{r',s'}(M)$ of the cohomological equation $Xu =f$ which satisfies the estimate
$$
\vert u \vert_{r',s'} \leq C_{r,s,r',s'} \vert f \vert_{r,s}\,.
$$
\end{theorem}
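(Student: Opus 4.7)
The plan is to combine the a priori estimate of Theorem~\ref{thm:aprioriest_smooth} with a Hahn--Banach duality argument that identifies the closure of $X(C^\infty_\Sigma(M))$ inside $H^{r,s}(M)$ with the annihilator of the space $\mathcal I^{r,s}_X(M)$ of $X$-invariant distributions of order $(r,s)$. The a priori estimate by itself is only a regularity/injectivity statement about solutions that happen to exist; the duality step is what upgrades it to the surjectivity needed for existence.

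First I will revisit the closed subspace $R_{r,s}(X) \subset H^{r,s}(M)$ introduced in \eqref{eq:R_rs}. By Hahn--Banach in the Hilbert space $H^{r,s}(M)$ (equivalently, the identity $V = V^{\perp\perp}$ for closed subspaces under the duality pairing with $H^{-r,-s}(M)$), an element $g \in H^{r,s}(M)$ lies in $R_{r,s}(X)$ if and only if $D(g)=0$ for every $D \in H^{-r,-s}(M)$ that annihilates $X\,C^\infty_\Sigma(M)$. The latter condition reads $D(Xv)=0$ for all $v \in C^\infty_\Sigma(M)$, and since $C^\infty_\Sigma(M)$ is dense in $H^{r+1,s}(M)$ and $X$ is continuous from $H^{r+1,s}(M)$ to $H^{r,s}(M)$, this is equivalent to $XD=0$ in the distributional sense. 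Hence $R_{r,s}(X) = \text{Ker}\,\mathcal I^{r,s}_X(M)$, and the hypothesis on $f$ yields a sequence $v_k \in C^\infty_\Sigma(M)$ with $Xv_k \to f$ in $H^{r,s}(M)$. Since the constant distribution lies in $\mathcal I^{r,s}_X(M)$, the hypothesis in particular forces $\int_M f\,d\text{vol}_M = 0$; subtracting the average of each $v_k$ I may therefore assume every $v_k$ has zero average.

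Next I apply Theorem~\ref{thm:aprioriest_smooth} to the zero-average differences $v_k - v_l$, with indices $(r,r',s,s')$ meeting the required strict inequalities $r>r'+1$ and $s>s'+3/2+2\gamma$, to obtain
\[
\vert v_k - v_l \vert_{r',s'} \le C\, \vert X v_k - X v_l \vert_{r,s}.
\]
Thus $(v_k)$ is Cauchy in $H^{r',s'}(M)$ and converges to some zero-average $u \in H^{r',s'}(M)$, and passing to the limit in the same inequality yields $\vert u \vert_{r',s'} \le C\,\vert f \vert_{r,s}$, which is the stated bound. Since $r' \ge 1$, the derivation $X$ is continuous from $H^{r',s'}(M)$ into $H^{r'-1,s'}(M)$, hence $Xu = \lim_k Xv_k = f$. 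For uniqueness I invoke Corollary~\ref{cor:no_smooth_inv}: the simultaneous Diophantine hypothesis implies non-rational holonomy, so any $w = u_1 - u_2 \in H^{r',s'}(M) \subset H^{1,0}(M)$ satisfying $Xw = 0$ (apply the corollary with $\theta = 0$) must be constant, and the zero-average normalization forces $w = 0$.

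The principal technical hinge is the duality identification of $R_{r,s}(X)$ with $\text{Ker}\,\mathcal I^{r,s}_X(M)$, which converts the regularity content of Theorem~\ref{thm:aprioriest_smooth} into the surjectivity needed for existence. Once that identification is in place the remaining steps are essentially bookkeeping: completeness of the Sobolev spaces produces the limit $u$ and its norm bound, and Corollary~\ref{cor:no_smooth_inv} supplies the uniqueness.
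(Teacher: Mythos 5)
Your proposal is correct and follows essentially the same route as the paper: the Hahn--Banach identification of $R_{r,s}(X)$ with $\text{Ker}\,\mathcal I^{r,s}_X(M)$, the a priori estimate of Theorem~\ref{thm:aprioriest_smooth} applied to a Cauchy sequence of smooth zero-average $v_k$ with $Xv_k \to f$, and Corollary~\ref{cor:no_smooth_inv} for uniqueness. Your write-up is in fact slightly more explicit than the paper's on two minor points (why the $v_k$ may be normalized to have zero average, and why $Xu = f$ survives the limit), but the argument is the same.
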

\begin{proof}
Let $R_{r,s}(X)\subset H^{r,s}(M)$ the closure of the range of the Lie derivative operator $X$ as in formula \eqref{eq:R_rs}, that is, 
$$
R_{r,s}(X):= \overline{ \{ f \in H^{r,s}(M) \vert f =Xv  \text{ with } v \in C^\infty_\Sigma(M)\}}^{ H^{r,s}(M)} \,.
$$
We claim that the subspace $R_{r,s}(X)$ is equal to the
joint kernel $\text{\rm Ker} (\mathcal I^{r,s}_X(M) )$ of all $X$-invariant distributions in the dual  space  $H^{-r,-s}(M)$.
In fact, on the one hand if $D \in \text{\rm Ker} (\mathcal I^{r,s}_X(M) )$, then by definition $D(Xv)=0$ for all $v\in C^\infty_\Sigma (M)$, hence $D$ vanishes on $R_{r,s}(X)$
since $D$ is a continuous functional on $H^{r,s}(M)$. On the
other hand, by the Hahn-Banach theorem, the closed subspace $R_{r,s}(X)$ is equal to the joint kernel of all functionals
$D \in H^{-r,-s}(M)$ which vanish on it. However, any functional
vanishing on $R_{r,s}(X)$, vanishes on all functions of the form
$Xv$ with $v\in H^{r+1,s}(M)$, hence it is an $X$-invariant distribution. The claim is therefore proved.

Finally, we prove that for all $f \in R_{r,s}(X)$, there exists
a solution $u \in H^{r',s'}(M)$ satisfying the stated estimates.
In fact, since $f \in R_{r,s}(X)$ there exists a sequence $\{f_k\}$, such that $f_k =X v_k$ with $v_k \in C^\infty_\Sigma(M)$ of zero average, which converges to $f$ in $H^{r,s}(M)$.

By Theorem \ref{thm:aprioriest_smooth} it follows that, for all
$k, h\in \N$, 
$$
\vert v_k \vert_{r',s'} \leq C \vert f_k \vert_{r,s} \quad 
\text{ and } \quad \vert v_k -v_h \vert_{r',s'} \leq C \vert f_k -f_h\vert_{r,s}\,,
$$
so that, since $\{f_k\}$ is convergent, hence a Cauchy sequence in $H^{r,s}(M)$, then $\{v_k\}$ is a Cauchy sequence in $H^{r',s'}(M)$. By completeness the sequence $\{v_k\}$ converges
to a function $u \in H^{r',s'}(M)$ which is a solution of the
cohomological equation $Xu =f$ and satisfies the stated bound
by continuity.

Finally, we conclude with the observation that since $r'\geq 1$
and $s'\geq 0$, by Corollary \ref{cor:no_smooth_inv} the zero-average solution of the cohomological equation in $H^{r',s'}(M)$
is unique. The same conclusion can also be derived from the
a priori estimate proved in Theorem \ref{thm:aprioriest_smooth}.
\end{proof}

\subsection{Invariant distributions}
\label{ssec:Inv_dist}
In this sections we prove some results on the space of invariant distributions for the geodesic flow, in particular we prove that
it has countable dimension and codimension. 

The space of invariant distributions is related to the space
of foliated meromorphic (or  anti-meromorphic) functions through the following constructions.

Let us assume that, for a given meromorphic functions $m$ on $M$ (square-integrable or otherwise taken as a distribution), the
cohomological equation $XU =m$ has a distributional solution.
Then the distribution $D= \partial^+ U$ is an $X$-invariant
distribution (possibly equal to the zero distribution). 

In this section we make this outline precise. We begin by
a description of the spaces of foliated meromorphic and anti-meromorphic distributions.

For every $r\in \N$ and $n\in \Z$, let 
$$
E_n^{-r} = \{ D \in H^{-r,0}(M) \vert \Theta(D) = in D\}\,.
$$

\begin{theorem} 
\label{thm:meromorphic}
For all $n\in \Z$ the subspaces $\mathcal M^{+,r}_n \subset E^{-r}_n$ and $\mathcal M^{-,r}_{-n} \subset E^{-r}_{-n}$ of foliated meromorphic and anti-meromorphic distributions are in bijective correspondence respectively
with the space of meromorphic  $n$-differentials on $S$ and
the space of anti-meromorphic $n$-differentials on $S$ with order of zero or pole $k_p$ at any conical point $p\in \Sigma$ of parameter $\alpha_p$, and angle $2\pi(\alpha_p+1)$, satisfying the inequalities
$$
k_p > (n-1)\alpha_p - r- 1 \,.
$$
\end{theorem}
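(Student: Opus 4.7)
The plan is to extend the correspondence $\imath_n^\pm$ of the previous lemma from $L^2$ to distributions, by combining elliptic regularity for $\partial^\pm$ on the punctured surface with a sharp local Sobolev analysis near each cone point. I would first construct the map $\imath_n^+$ directly: for a meromorphic $n$-differential $d$, set $\imath_n^+(d)(x,\xi) := d(x)(\xi)$ for $(x,\xi) \in M$ over $S\setminus\Sigma$. In the local complex chart around a cone point of parameter $\alpha_p$, using the formulas from subsection~\ref{ssec:CR} and the unit-tangent identification $(e^{2\pi i}z, \theta) \equiv (z, \theta + 2\pi\alpha_p)$, a monomial $z^k(dz)^n$ is sent to the foliated function $z^{k-n\alpha_p}e^{in\theta}/2^n$, which is single-valued on the local chart precisely because $k \in \Z$.

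The central analytical step is the sharp local Sobolev estimate, showing that this monomial model belongs to $H^{-r,0}$ exactly when $k > (n-1)\alpha_p - r - 1$. Using the local volume form $|z|^{2\alpha_p}\,dx\,dy\,d\theta$ and the explicit formula for $X$ (each application of which lowers the radial power by $\alpha_p+1$), one pairs against test functions in $C^\infty_\Sigma \cap E_n$ expressed in the local variables $(z,\bar z, u, \bar u)$ with $u = e^{-i\theta}z^{\alpha_p}$. The generic $E_n$-monomial in these variables takes the form $z^{p+m\alpha_p}\bar z^{q+(m+n)\alpha_p}e^{in\theta}$ with $p,q,m \geq 0$, and the angular Fourier selection rule forces $p = k + q$; the resulting radial integral then converges under the stated threshold, and the constant is controlled by the $H^{r,0}$ norm. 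Distributional $\partial^+$-closedness follows from $\partial_{\bar z}\,z^{k - n\alpha_p} = 0$ on each local chart.

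For the converse, I would start from $D \in \mathcal{M}^{+,r}_n$. Elliptic regularity for $\partial^+$ over $M\setminus (\text{fibers over } \Sigma)$, together with $\Theta D = inD$, writes $D$ locally as $e^{in\theta}g(z)/2^n$ for $g$ holomorphic on $S\setminus \Sigma$. The assumption $D \in H^{-r,0}(M)$ then bounds the order of the Laurent expansion of $g$ at each cone point by the same sharp threshold, so $d := g(z)(dz)^n$ extends to a global meromorphic $n$-differential with $k_p > (n-1)\alpha_p - r - 1$. Injectivity of $\imath_n^+$ is automatic, since $d(x)(\xi)$ recovers $d$ pointwise on any regular fiber. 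The anti-meromorphic case is entirely symmetric, with $\partial^-$ in place of $\partial^+$.

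The main obstacle will be sharpness of the local Sobolev threshold at cone points. Two subtle ingredients enter. First, the $\Theta$-eigenvalue constraint restricts the admissible monodromy of test-function monomials in $E_n$, and it is precisely this constraint, rather than a pure duality bookkeeping of radial exponents, that produces the exponent $(n-1)\alpha_p$ in the threshold. Second, distributional $\partial^+$-regularity at the cone must be carried out in the smooth variables $(z,\bar z, u, \bar u)$ from Definition~\ref{def:smooth_functions}, in which the degenerate operator $e^{-i\theta}\bar z^{-\alpha_p}\partial_{\bar z}$ becomes a standard Cauchy--Riemann operator amenable to the usual interior elliptic-regularity arguments.
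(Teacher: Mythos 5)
Your construction of $\imath_n^\pm$, the local monomial model $z^k(dz)^n\mapsto z^{k-n\alpha_p}e^{in\theta}$ at a cone point, and the converse via holomorphicity of $\Theta$-eigendistributions away from the singular fibers all match the paper's argument; for $r=0$ the paper likewise reduces to a radial integrability computation (there it is the $L^2$ norm of $f_n$ itself against $\vert z\vert^{2\alpha_p}dx\,dy\,d\theta$, giving $k>(n-1)\alpha_p-1$). The divergence, and the gap, is in how you treat $r>0$.

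Your "sharp local Sobolev estimate" rests on pairing the monomial model against smooth test monomials of $C^\infty_\Sigma\cap E_n$ and checking that the resulting radial integral converges. But absolute convergence of $\langle D,v\rangle$ for each smooth monomial $v$ is strictly weaker than membership in $H^{-r,0}(M)$, which requires $\vert\langle D,v\rangle\vert\leq C\vert v\vert_{r,0}$ uniformly over $H^{r,0}$ (a space containing non-smooth limits that concentrate at the cone point). Concretely, already at $r=0$ your criterion fails: take $n=1$, $\alpha_p=1$, $k=-1$. The selection rule $p=k+q$ forces $q\geq 1$ in any nonvanishing pairing, and the radial exponent $2(k+q+a\alpha_p)+2\alpha_p+1\geq 3$ makes every such integral converge; yet $\vert f_1\vert^2\vert z\vert^{2\alpha_p}\,r\,dr\sim r^{-1}dr$ diverges, so $f_1\notin L^2=H^{-0,0}$. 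So the mechanism you describe yields a threshold of roughly $k>-q-(a+1)\alpha_p-1$, which is more permissive than the stated $k>(n-1)\alpha_p-r-1$, and the assertion that "the constant is controlled by the $H^{r,0}$ norm" is exactly the point that is not established. The paper avoids this by never arguing through duality against monomials: it computes the $L^2$ threshold directly for $r=0$, and for $r>0$ it uses the identity \eqref{eq:CR_n_diff}, which exhibits the function of an $n$-differential of order $k$ as the $\partial^-$-derivative of the function of an $(n-1)$-differential of order $k+1$, so that elements of $\mathcal M^{+,r}_n$ are realized as $r$-fold Cauchy--Riemann derivatives of square-integrable functions (with the separate logarithmic primitive $z^{(n-1)\alpha}\log\vert z\vert\,dz^{n-1}$ when $(n-1)\alpha\in\Z$, a case your proposal does not address). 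To repair your route you would need a genuine operator-norm duality estimate at the cone point, or you should adopt the paper's factorization-through-$\partial^-$ strategy for the inductive step in $r$.
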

\begin{proof} An $n$-differential $d_n$ is a section of the 
$n$-th power of the canonical bundle $T^\ast (S)$ of the Riemann surface $S$. It induces a function on $M= T_1(S\setminus \Sigma)$ by  the formula
\begin{equation}
\label{eq:n-diff}
f_n(x,v) = d_n(x) (v) \,, \quad \text{ for all }(x,v) \in M\,.
\end{equation}
The function $f_n$ is well-defined on $M$, it is foliated
meromorphic (anti-meromorphic) on $M$ if $d_n$ is meromorphic
(anti-meromorphic) and there exists $r\geq 0$ such that $f_n$ belongs to $E_n^{-r}$ since $d_n$ is an $n$-differential, hence
$$
\Theta f_n = in f_n\,.
$$
Conversely, given a foliated meromorphic (anti-meromorphic) 
function $f_n$ on $M$ which verifies the above differential
equation, the identity in formula \eqref{eq:n-diff} uniquely
defines a meromorphic (anti-meromorphic) $n$-differential $d_n$.

Clearly any meromorphic or anti-meromorphic $n$-differential induces
a foliated meromorphic, respectively anti-meromorphic, function on $M$. We then investigate the relation between the integrability properties of the induced function 
and the order of zero or pole of the differential. 

Let $p\in \Sigma$ be a conical point of total angle $2\pi(\alpha+1)$. We recall that the metric on $S$ near $p$
has the expression $\vert z^\alpha dz \vert$ (with respect
to a canonical coordinate). Let $d_n= d_n(z) dz^n$  be a meromorphic  differential with a zero or pole of order $k\in \Z$ at $p$ ($k>0$ for a zero and $k<0$ for a pole). The function $f_n$ has the form
$$
f_n(z,\theta) = d_n(z) (z^{-\alpha} e^{i\theta} )^n
$$
hence by integrating on a disk centered at $p$ in polar coordinate we can verify that $f_n$ is locally square integrable iff
$$
\int_0^1  r^{2k} r^{-2n\alpha} r^{2\alpha +1} dr < +\infty\,.
$$
that is, iff
$$
k > (n-1)\alpha -1 \,.
$$
We conclude that the above inequality for all $p\in \Sigma$ is the necessary and sufficient condition for the function $f_n$
to belong to the space $\mathcal M^{+,0}_n$ of square-integrable foliated meromorphic eigenfunctions of $\Theta$ with eigenvalue $in$. 

A similar set of inequalities for all $p\in \Sigma$ is the necessary and sufficient condition for the function $f_n$ induced by an anti-meromorphic $n$-differential $d_n$
to belong to the space $\mathcal M^{-,0}_{-n}$ of square-integrable foliated anti-meromorphic eigenfunctions of $\Theta$ with eigenvalue $-in$. 

We recall that the Cauchy-Riemann operators are given 
in a local canonical coordinate near a cone point of 
angle $2\pi(\alpha+1)$ by the formulas
$$
\partial^+ = e^{-i\theta}\bar{z}^{-\alpha} \frac{\partial}{\partial \bar{z}} \,, \quad \partial^- = e^{i\theta}z^{-\alpha} \frac{\partial}{\partial z} \,.
$$
Let $f_{n-1}$ be the foliated meromorphic function on $M$ given by a meromorphic $(n-1)$-differential $d_{n-1}$. We then have 
$$
\begin{aligned}
\partial^- f_{n-1}(z,\theta) &= e^{i\theta}z^{-\alpha} \frac{\partial}{\partial z} \Big(d_{n-1}(z) (z^{-\alpha} e^{i\theta} )^{n-1}\Big) \\ &=  \frac{\partial d_{n-1}}{\partial z}(z) (z^{-\alpha} e^{i\theta} )^{n}
- (n-1)\alpha  d_{n-1}(z) z^{-1} (z^{-\alpha} e^{i\theta} )^{n}\,,
\end{aligned}
$$
which is the foliated holomorphic function induced by the meromorphic $n$-differential 
\begin{equation}
\label{eq:CR_n_diff}
d_n = \Big(\frac{\partial d_{n-1}}{\partial z}(z)- (n-1) \alpha d_{n-1}(z) z^{-1} \Big) dz^{n}\,,
\end{equation}
hence the (non-negative) order of pole is changed by $-1$.
A similar calculation can be carried out for the foliated
anti-meromorphic functions (or it can be derived by complex
conjugation). 

Since the the range of the map $$d_{n-1}(z) \to \partial d_{n-1}(z)/\partial z- (n-1)\alpha d_{n-1}(z) z^{-1}$$ on the space of Laurent series consists of all Laurent series (with no term of order $(n-1)\alpha-1$ in case $(n-1)\alpha \in \Z$), it follows that, locally near $p\in \Sigma$, all functions induced by a meromorphic $n$-differential with order of zero $k\in \Z$, with no term of order $(n-1)\alpha-1$ in case $(n-1)\alpha \in \Z$, are $\partial^-$ derivatives of functions induced by a meromorphic $(n-1)$-differential with order of zero $k+1$. In case $(n-1)\alpha \in \Z$, the term
of the Laurent series of order $(n-1)\alpha -1$ is given by the
identity 
$$
z^{(n-1)\alpha -1} = \frac{\partial d_{n-1}}{\partial z}(z) - n\alpha d_{n-1}(z) z^{-1} \quad \text {for } d_{n-1}(z) = z^{(n-1)\alpha} \log \vert z \vert\,.
$$
The   differential $z^{(n-1)\alpha} \log \vert z \vert dz^{n-1}$ (which is (neither meromorphic nor anti-meromorphic) gives a square integrable function, hence
$z^{(n-1)\alpha-1}  dz^{n}$ gives a function in $\mathcal M^{+,-1}$.

\noindent Similar statements hold for anti-meromorphic functions induced by  anti-meromorphic $n$-differentials.

\noindent We conclude that the necessary and sufficient condition for the function $f_n$ induced by a meromorphic $n$-differential $d_n$ to belong to the space $\mathcal M^{+,r}_n$ of  foliated meromorphic functions in $E^{-r}_n$ are given by the
set of inequalities
$$
k_p > (n-1)\alpha_p - r - 1 \,, \quad \text{ for all } p\in \Sigma.
$$
A similar set of inequalities for all $p\in \Sigma$ is the necessary and sufficient condition for the function $f_n$ induced by an anti-meromorphic $n$-differential 
to belong to the space $\mathcal M^{-,r}_{-n}$ of foliated anti-meromorphic functions in $E^{-r}_{-n}$. 
\end{proof}

\begin{corollary} 
\label{cor:mero_sq_int}
For every flat surface $(S,R)$ with conical singularities, there exist infinite (countable) sets $Z^\pm_S\subset \Z$ such that 
$$
\mathcal M^{\pm, 0}_n \not=\{0\} \,, \quad \text{ for all } n\in Z^\pm_S\,.
$$
\end{corollary}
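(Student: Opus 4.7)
The plan is to apply Theorem~\ref{thm:meromorphic} at $r=0$, which identifies $\mathcal M^{+,0}_n$ with the space of meromorphic $n$-differentials $d_n$ on $S$ whose order $k_p$ at each conical point $p\in\Sigma$ (of parameter $\alpha_p$) satisfies the strict inequality
\[
k_p > (n-1)\alpha_p - 1,
\]
and similarly identifies $\mathcal M^{-,0}_{-n}$ with the anti-meromorphic $n$-differentials obeying the same inequality. The corollary thus reduces to producing, for infinitely many $n\in\Z$, a non-zero meromorphic (resp.\ anti-meromorphic) $n$-differential on the compact Riemann surface $S$ with prescribed lower bounds on the orders at finitely many points. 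I will in fact produce such a differential for \emph{every} $n\in\Z$ and take $Z^\pm_S=\Z$.

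For each integer $n$, I choose integers $k_p$, one for each $p\in\Sigma$, realizing the strict bound (for example $k_p=\lceil(n-1)\alpha_p\rceil$ when $(n-1)\alpha_p\notin\Z$, otherwise $k_p=(n-1)\alpha_p+1$). Fix an auxiliary regular point $q\in S\setminus\Sigma$ and consider, for a positive integer $N$, the line bundle
\[
L_{n,N} \,=\, K_S^{\otimes n}\otimes \mathcal{O}\bigl(-\textstyle\sum_{p\in\Sigma}k_p\,[p]+N[q]\bigr),
\]
whose global sections are precisely the meromorphic $n$-differentials on $S$ that are holomorphic on $S\setminus(\Sigma\cup\{q\})$, have order $\geq k_p$ at each cone point, and have at worst a pole of order $N$ at $q$. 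Its degree equals $n(2g-2)-\sum_p k_p + N$, which for $N$ sufficiently large (depending on $n$) exceeds $2g-2$; by Riemann--Roch we then have
\[
\dim H^0(S, L_{n,N}) \,\geq\, \deg L_{n,N} - g + 1 \,>\, 0.
\]
Any non-zero global section produces a non-zero element of $\mathcal M^{+,0}_n$, showing $n\in Z^+_S$. For the anti-meromorphic side, complex conjugation of the differential just constructed yields a non-zero anti-meromorphic $n$-differential satisfying the same order bounds, so $n\in Z^-_S$ as well.

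There is no serious obstacle: once Theorem~\ref{thm:meromorphic} is in hand, the argument is a routine invocation of Riemann--Roch on a single compact Riemann surface, combined with the trick of absorbing the finite negative contribution $-\sum_p k_p$ by a sufficiently high-order pole at an auxiliary regular point. The only mildly delicate bookkeeping is in the boundary case where $(n-1)\alpha_p$ happens to be an integer, since the inequality $k_p>(n-1)\alpha_p-1$ is strict and one must increment the naive floor by $1$ to secure it; this is handled once and for all by the choice of $k_p$ above.
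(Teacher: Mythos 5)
There is a genuine gap, and it sits exactly at the point you describe as ``the trick of absorbing the finite negative contribution by a sufficiently high-order pole at an auxiliary regular point.'' The superscript $0$ in $\mathcal M^{\pm,0}_n$ means square-integrability of the induced function $f_n$ on $M$ with respect to $\mathrm{vol}_M$. At a regular point $q$ the metric is euclidean, so if $d_n$ has a pole of order $N\ge 1$ at $q$ then $|f_n|^2\sim |z|^{-2N}$ near $q$ and $\int_0^1 r^{1-2N}\,dr=\infty$; this is the $\alpha=0$ case of the local computation in the proof of Theorem~\ref{thm:meromorphic}, where the integrability condition $k>(n-1)\alpha-1$ reads $k\ge 0$, i.e.\ the differential must be \emph{holomorphic} at every regular point. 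So a nonzero section of your $L_{n,N}$ with an actual pole at $q$ does not lie in $\mathcal M^{+,0}_n$, and Riemann--Roch applied to $L_{n,N}$ with $N$ large proves nothing about the space you need, namely $H^0\bigl(S,K_S^{\otimes n}\otimes\mathcal O(-\sum_p k_p[p])\bigr)$ with no auxiliary twist.

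Once the pole at $q$ is disallowed, the problem becomes a genuine arithmetic one: you need integers $k_p>(n-1)\alpha_p-1$ with $\sum_p k_p\le n(2g-2)$ (so that the untwisted bundle has nonnegative degree and, via Riemann--Roch or the strata dimension count of \cite{BCGGM}, admits a nonzero section). Since $\sum_p\alpha_p=2g-2$, the slack between $\sum_p\bigl((n-1)\alpha_p-1\bigr)=(n-1)(2g-2)-\sigma$ and $n(2g-2)$ is only $2g-2+\sigma$, independent of $n$, and whether admissible integers $k_p$ exist depends on the fractional parts of $(n-1)\alpha_p$. This is why the paper invokes equidistribution of the toral translation by $(\alpha_1,\dots,\alpha_\sigma)$ to extract an \emph{infinite subset} of good $n$, and why your stronger claim $Z^\pm_S=\Z$ fails in general: already in genus $0$ the dimension count $-2n-\sum_i k_i+1$ is positive only for $n$ with favorably placed fractional parts. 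To repair the argument you must drop the auxiliary point $q$, keep the constraint $\sum_p k_p\le n(2g-2)$, and select $n$ along a subsequence where $(n-1)(\alpha_1,\dots,\alpha_\sigma)$ is close to $\Z^\sigma$.
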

\begin{proof} 

By \cite[Theorem 2.1]{BCGGM}, given any integral
vector $(k_1, \dots, k_\sigma) \in \Z^\sigma$ such that 
$$
\sum_{i=1}^\sigma k_i = n (2g-2)\,,
$$
the moduli space of meromorphic $n$-differentials having $\sigma$ zeros or poles of orders $(k_1, \dots, k_\sigma)$ has dimension $2g-2 + \sigma$.

Let $(\alpha_1, \dots, \alpha_\sigma)$ denote the parameters
of the cone angles of $(S,R)$.
By the ergodic properties of toral translations, for every $\epsilon>0$ there exists an infinite subset $Z_S(\epsilon) \subset \Z$ such that, for all $n \in Z_S(\epsilon)$, 
$$
\min _{(k_1, \dots, k_\sigma) \in \Z^\sigma}  \vert (n-1) (\alpha_1, \dots, \alpha_\sigma)
- (k_1, \dots, k_\sigma) \vert < \epsilon \,.
$$
Let $(k'_1, \dots, k'_\sigma) \in \Z^\sigma$ be an integer
vector which realizes the above minimum. It follows that 
$$
\vert \sum_{i=1}^\sigma k'_i -(n-1) \sum_{i=1}^\sigma \alpha_\sigma \vert = \vert \sum_{i=1}^\sigma k'_i -(n-1) (2g-2) \vert < \sigma \epsilon\,.
$$
For $\sigma \epsilon <1$ we derive that
$$
\sum_{i=1}^\sigma k'_i = (n-1) (2g-2) \,.
$$
For $g \geq 2$, since $2g-2 \geq 0$, it follows that there exists $(k_1, \dots, k_\sigma)$ such that $\sum_{i=1}^\sigma k_i = n(2g-2)$ and, for all $i \in \{1, \dots, \sigma\}$,
$$
k_i  \geq k'_i > (n-1) \alpha_i - \epsilon > n-1) \alpha_i - 1\,.
$$
It then follows from Theorem \ref{thm:meromorphic} (with $r=0$) that, for all $n\in Z(\epsilon)$, the spaces $\mathcal M^{+,0}_n$ and $\mathcal M^{-,0}_{-n}$ have positive dimension. In fact, by the Riemann-Roch theorem, for $g\geq 2$ and $n>1$ the space of $n$-differentials with orders of zeros given by the vector $(k'_1, \dots, k'_\sigma)$ (or higher) is at least $$(2n-1)(g-1) - (n-1)(2g-2)= g-1\,.$$
In the case $g=1$, the surface has no conical singularities and
has integral holonomy, therefore all the spaces $\mathcal M^\pm_{n}$ have complex dimension $1$ (the only homolomorphic or anti-holomorphic functions on complex tori are constant).

\smallskip
In the case $g=0$ (and then  $\sigma \geq 3$), by a explicit computation, or by the Riemann-Roch theorem, the space of $n$-differentials with zeros or poles of order $(k_1, \dots, k_\sigma)$ has dimension 
$$
-2n - \sum_{i=1}^\sigma k_i + 1 \,.
$$
Given the ergodic properties of toral translation,  since
$$
\sum_{i=1}^\sigma [(n-1)\alpha_i -1] = -2n +2 -\sigma \,,
$$
there exists an infinite set $Z_S\subset \Z$ such that, for each $n \in Z_S$ there exists $(k_1, \dots, k_\sigma) \in \Z^\sigma$ with $k_i > (n-1)\alpha_i -1$ for all $i \in \{1, \dots, \sigma\}$ and 
$$
 -2n +2 -\sigma= \sum_{i=1}^\sigma [(n-1)\alpha_i -1]< \sum_{i=1}^\sigma k_i = -2n +3 -\sigma \,.
$$
It follows that for $n\in Z_S$ the dimension of $\mathcal M^{+,0}_n$ and $\mathcal M^{-,0}_{-n}$  is at least 
$$
-2n -(2n+3 -\sigma) +1 = \sigma-3 + 1 >0\,.
$$
The argument is complete.
\end{proof}

By Theorem \ref{thm:CE_dist_sol} we derive, as a special case, the
existence of distributional solutions of the cohomological equation with data given by foliated meromorphic or anti-meromorphic distributions vanishing on constant functions.

\begin{corollary}
\label{cor:mero_dist_sol}
Let us assume that the holonomy of the flat surface $(S,R)$
satisfies a simultaneous Diophantine  condition of exponent $\gamma >0$.  Given $r',s'\geq 0$ and for every $r >r' +1$ and $s>s' + 2\gamma +3/2$, there exists a constant $C:= C(r,r's,s',\gamma)>0$ such that, for all meromorphic, respectively  anti-meromorphic, distribution  $m^\pm \in H^{-r',-s'}(M)$, vanishing on constant functions there exists a solution $U^\pm \in H^{-r,-s}(M)$ of the cohomological equation $XU^\pm =m^\pm$ such that 
\begin{equation}
\vert U^\pm\vert _{-r,-s} \leq C \vert m^\pm\vert _{-r',-s'} \,.
\end{equation}
\end{corollary}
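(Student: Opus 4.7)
The plan is to recognize the corollary as a direct instantiation of Theorem \ref{thm:CE_dist_sol} applied to $F = m^\pm$, so the work reduces to checking that a foliated meromorphic (or anti-meromorphic) distribution fits the hypotheses of that theorem in the required Sobolev regularity.

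First, I would verify that $m^\pm \in H^{-r',-s'}(M)$. By Theorem \ref{thm:meromorphic}, a foliated meromorphic distribution $m^+$ splits along the $\Theta$-eigenspaces with components in $\mathcal{M}^{+,r'}_n \subset E_n^{-r'} \subset H^{-r',0}(M)$, and the continuous inclusion $H^{-r',0}(M) \hookrightarrow H^{-r',-s'}(M)$ (valid for $s' \geq 0$) yields $m^+ \in H^{-r',-s'}(M)$ with $\vert m^+\vert_{-r',-s'} \leq \vert m^+\vert_{-r',0}$; the same argument applies to anti-meromorphic $m^- \in \mathcal{M}^{-,r'}$. The hypothesis that $m^\pm$ vanishes on constant functions is precisely the requirement of Theorem \ref{thm:CE_dist_sol} that the right-hand side annihilate the (one-dimensional) space of constants.

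Second, in the regime $r > r'+1$ and $s > s' + 2\gamma + 3/2$, Theorem \ref{thm:CE_dist_sol} produces a distribution $U^\pm \in H^{-r,-s}(M)$ with $XU^\pm = m^\pm$ together with the bound $\vert U^\pm\vert_{-r,-s} \leq C \vert m^\pm\vert_{-r',-s'}$, where the constant $C = C(r,r',s,s',\gamma) > 0$ is inherited verbatim from the theorem. No additional estimate is required, since the meromorphic or anti-meromorphic structure plays no role in the existence or the quantitative bound; its purpose lies in the sequel, where, by the observation opening subsection \ref{ssec:Inv_dist}, the distributions $D^\pm := \partial^\mp U^\pm$ become candidates for $X$-invariant distributions, exploiting the commutation $[X,\partial^\pm] = 0$ together with $\partial^\mp m^\pm = 0$. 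I expect no genuine obstacle in the proof itself; the only subtlety worth flagging is the verification above that $m^\pm$ indeed lies in the dual Sobolev space of the prescribed order, which is ensured by Theorem \ref{thm:meromorphic} through the pole-order inequalities $k_p > (n-1)\alpha_p - r' - 1$.
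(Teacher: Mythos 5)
Your proposal is correct and matches the paper, which states this as an immediate special case of Theorem \ref{thm:CE_dist_sol} with no separate proof given. One small remark: the first paragraph of your proposal, which attempts to \emph{verify} that $m^\pm \in H^{-r',-s'}(M)$ via Theorem \ref{thm:meromorphic}, is not actually needed, since $m^\pm \in H^{-r',-s'}(M)$ is already part of the hypothesis of the corollary; all that is required is to observe that a meromorphic or anti-meromorphic distribution vanishing on constant functions satisfies the hypotheses of Theorem \ref{thm:CE_dist_sol} and then apply it verbatim, which is exactly what your second paragraph does.
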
 

The construction of $X$-invariant distributions proceeds as follows (in analogy with the case of translation surfaces \cite{Forni1997}). We define the following linear operators
$\mathcal D^\pm$ from the space of meromorphic distributions to the space $\mathcal I_X(M)$ of invariant distributions. 

Let $\mathcal M^{r',s',\pm}_0 \subset H^{-r',-s'}(M)$ denote
the subspace of meromorphic, respectively anti-meromorphic,
distributions vanishing to constant functions. Let 
$\mathcal I^{r,s}_X(M) \subset H^{-r,-s}(M)$ be the subspace
of all $X$-invariant distributions.

\begin{theorem}
\label{thm:Dpm}
Let us assume that the holonomy of the flat surface $(S,R)$
satisfies a simultaneous Diophantine  condition of exponent $\gamma >0$. Let $r',s'\geq 0$ and for every $r >r' +2$ and $s>s' + 2\gamma +3/2$. There exist bounded linear maps $\mathcal D^\pm: \mathcal M^{r',s',\pm}_0 \to \mathcal I^{r,s}_X(M) \subset H^{-r, -s}(M)$, defined as
follows: for every $m^\pm \in \mathcal M^{r',s',\pm}_0$ there
exists a unique $U^\pm \in H^{-(r-1),-s}(M)$, orthogonal to $\mathcal I^{r-1,s}_X(M)$, such that
$$
X U^\pm = m^\pm  \quad \text{ and }  \quad \mathcal D^\pm(m^\pm)= \partial^\pm U^\pm\,.
$$ 
For $r''>r+2$ and $s'' > s+2\gamma$ the  maps $\mathcal D^\pm$ on the spaces $\mathcal M^{r'', s'', \pm}$ are onto the subspace $\mathcal I^{r,s}_X(M)_0 \subset \mathcal I^{r,s}_X(M)$ of $X$-invariant distributions vanishing on constant functions.
\end{theorem}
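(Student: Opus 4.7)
The plan is to split the argument into two parts: the construction of $\mathcal{D}^\pm$ with the claimed $X$-invariance and boundedness, and then a more delicate surjectivity claim obtained by inverting the foliated Laplacian $H$ on $\partial^\mp D$.

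\emph{First I would build $\mathcal{D}^\pm$ directly.} For $m^\pm \in \mathcal{M}^{r',s',\pm}_0 \subset H^{-r',-s'}(M)$, Corollary~\ref{cor:mero_dist_sol} provides a distributional solution $U \in H^{-(r-1),-s}(M)$ of $XU = m^\pm$ with $\vert U\vert_{-(r-1),-s} \leq C\vert m^\pm\vert_{-r',-s'}$; the needed index condition $r-1 > r'+1$ is precisely $r > r'+2$, and $s > s'+2\gamma+3/2$ is part of the hypothesis. Two such solutions differ by an element of $\mathcal{I}^{r-1,s}_X(M)$, which is closed in $H^{-(r-1),-s}(M)$ as the intersection of the hyperplanes $\{D : D(Xv)=0\}$ over $v \in H^{r,s}(M)$. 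Projecting onto its Hilbert-space orthogonal complement singles out a unique $U^\pm$ with a bound of the same form, and I would set $\mathcal{D}^\pm(m^\pm) := \partial^\pm U^\pm \in H^{-r,-s}(M)$, the target being correct by the boundedness of $\partial^\pm : H^{-(r-1),-s}(M) \to H^{-r,-s}(M)$. The $X$-invariance is then immediate from the commutation $[X,\partial^\pm]=0$ of \eqref{eq:comm_rel_main}, since $X\partial^\pm U^\pm = \partial^\pm(XU^\pm) = \partial^\pm m^\pm = 0$ because $m^\pm$ lies in $\ker\partial^\pm$. Linearity and the stated a priori bound for $\mathcal{D}^\pm$ follow by composition of continuous operators.

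\emph{For surjectivity,} given $D \in \mathcal{I}^{r,s}_X(M)_0$, my strategy is to invert $\partial^\pm$ on $D$ via $H$. If $\partial^\pm U = D$ for some distribution $U$, then $XU$ lies automatically in $\mathcal{M}^\pm$, since $\partial^\pm(XU) = X\partial^\pm U = XD = 0$. To produce such a $U$, I would observe that $\partial^\mp D \in H^{-r-1,-s}(M)$ is $X$-invariant (again by $[X,\partial^\mp]=0$) and of zero average (since $D$ annihilates constants and $\partial^\mp$ vanishes on constants). Dualizing Theorem~\ref{thm:H_apriori_est} through the self-adjointness of $H$ produces a unique zero-average $V \in H^{-r-1,-(s+2\gamma)}(M)$ with $HV = -\partial^\mp D$. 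Using $H = -\partial^\pm\partial^\mp$ one computes $\partial^\mp(\partial^\pm V - D) = -HV - \partial^\mp D = 0$, so the obstruction $\partial^\pm V - D$ lies in $\mathcal{M}^\mp$. After cancelling this obstruction by a meromorphic/anti-meromorphic correction of $V$ and projecting the corrected solution onto $\mathcal{I}^{r-1,s}_X(M)^\perp$, the outcome is the distinguished $U^\pm$ corresponding to $m^\pm := XU^\pm$, so $\mathcal{D}^\pm(m^\pm) = D$. The loss of two horizontal derivatives and $2\gamma$ of vertical regularity through the Laplacian inversion is exactly what the conditions $r'' > r+2$ and $s'' > s+2\gamma$ absorb.

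\emph{The hard part will be} cancelling the obstruction $\partial^\pm V - D \in \mathcal{M}^\mp$. Applying $X$ to this identity and using $XD = 0$ together with $[X,\partial^\pm]=0$ relates $\partial^\pm(XV)$ to the obstruction, while $H(XV) = X(HV) = -\partial^\mp(XD) = 0$ forces $XV$ into the distributional kernel of $H$ on zero-average data, which is trivial by dualizing Theorem~\ref{thm:H_apriori_est}. This should collapse the residual obstruction into an element of $\mathcal{M}^+\cap\mathcal{M}^-$, i.e., a distribution annihilated by both Cauchy--Riemann operators, hence leafwise constant along the holonomy foliation; the ergodicity of that foliation (Theorem~\ref{thm:holonomy_erg}) and its consequence Corollary~\ref{cor:no_smooth_inv}, extended to distributions, then force vanishing given the zero-average condition. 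Carrying out this cancellation rigorously in the distributional category, and verifying that the required meromorphic correction itself lives in $\mathcal{M}^{r'',s'',\pm}$ with the stated regularity, is where the technical work is concentrated, in analogy with the construction of invariant distributions for translation flows in \cite{Forni1997}.
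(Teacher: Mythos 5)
Your construction of $\mathcal D^\pm$ in the first part matches the paper's proof: use Corollary~\ref{cor:mero_dist_sol} to solve $XU^\pm = m^\pm$, pass to the unique representative orthogonal to $\mathcal I^{r-1,s}_X(M)$, set $\mathcal D^\pm(m^\pm)=\partial^\pm U^\pm$, and get $X$-invariance from $[X,\partial^\pm]=0$ and $\partial^\pm m^\pm=0$. That part is fine.

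The surjectivity argument is where you diverge from the paper, and where the proposal has a real gap. The paper does not first invert $H$ on $\partial^\mp D$ and then chase an obstruction. Instead it constructs $U^\pm$ directly as a linear functional: for $D\in\mathcal I^{r,s}_X(M)_0$ and zero-average $v\in H^{r+2,s+2\gamma}(M)$, it chains the inequalities
$$
\vert\langle D,v\rangle\vert \leq \vert D\vert_{-r,-s}\,\vert v\vert_{r,s} \leq C_{r,s}\,\vert Hv\vert_{r,s+2\gamma} \leq C_{r,s}\,\vert\partial^\pm v\vert_{r+1,s+2\gamma},
$$
the middle step being Theorem~\ref{thm:H_apriori_est}, and then defines $U^\pm$ on the closure of the range of $\partial^\pm$ by $\langle U^\pm,\partial^\pm v\rangle := -\langle D,v\rangle$, extending by zero on the orthogonal complement. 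This produces a genuine $\partial^\pm$-primitive of $D$ in $H^{-(r+1),-(s+2\gamma)}(M)$ with no residual obstruction, and one then takes $m^\pm := XU^\pm$ and checks $\partial^\pm m^\pm = X\partial^\pm U^\pm = XD = 0$. The loss of two horizontal and $2\gamma$ vertical derivatives is the same as yours; the point is that it is a one-step Hahn--Banach construction.

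By contrast, your route leaves two nontrivial holes. First, even granting $XV=0$, the step from $\partial^\pm V - D \in \mathcal M^\mp$ to $\partial^\pm V - D\in\mathcal M^+\cap\mathcal M^-$ requires the further computation that under $XV=0$ (so $\partial^+V=-\partial^-V$) one has $\partial^\pm(\partial^\pm V) = HV = -\partial^\mp D$, and hence $\partial^\pm(\partial^\pm V - D) = -\partial^\mp D - \partial^\pm D = -2XD = 0$; you gesture at this but do not carry it out. Second, and more seriously, you then need to know that a distribution of zero average killed by both $\partial^+$ and $\partial^-$ vanishes, but Theorem~\ref{thm:holonomy_erg} and Corollary~\ref{cor:no_smooth_inv} are proved only for $L^2$ functions (and $H^{1,0}$ functions), where pointwise evaluation along holonomy loops makes sense. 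The paper offers no distributional version, and the $L^2$ proof does not transfer automatically since it relies on pointwise evaluation of $u_n$ and on the structure $u_n\in E_n$, not merely on ellipticity. Your proposal acknowledges this as ``the hard part'' but does not close it; the paper's direct construction sidesteps the issue entirely, which is exactly why it proceeds as it does.
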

\begin{proof} 
Let $m^\pm \in \mathcal M^{-r',-s', \pm}$ a meromorphic/anti-meromorphic, distribution vanishing on constant functions. Under the condition of Corollary \ref{cor:mero_dist_sol} there exists a solution $U^\pm \in H^{-(r-1),-s}(M)$ of the cohomological equation $XU^\pm = m^\pm$. 

The distribution $D^\pm := \partial U^\pm \in H^{-r, -s}(M)$
is $X$-invariant since (by commutation)
$$
X D^\pm = X \partial U^\pm = \partial^\pm (XU^\pm) = \partial^\pm m^\pm =0 \,.
$$
We observe that the solution of the cohomological equation
$XU^\pm = m^\pm$ is not uniquely defined, but defined only 
up to invariant distributions in the space $H^{-(r-1),-s}(M)$.
However, we can consider the unique solution orthogonal in 
the Hilbert space $H^{-(r-1),-s}(M)$ to the subspace $\mathcal I_X^{r-1,s}(M) \subset H^{-(r-1),-s}(M) $ of all $X$-invariant distributions. Such a solution minimizes the norm among all
solutions, therefore by the estimate in Corollary \ref{cor:mero_dist_sol}, we have
$$
\vert D^\pm \vert_{-r,-s}= \vert \partial^\pm U^\pm \vert_{-(r-1),-s} \leq \vert U^\pm \vert_{-(r-1),-s} \leq C \vert m^\pm \vert_{-r',-s'} \,,
$$
hence the operator (which is easily seen to be linear) is bounded,
as stated.

It remains to prove that the maps $\mathcal D^\pm$ are onto
the subspace $\mathcal I^{r,s}_X(M)_0$.

Let $D \in \mathcal I^{r,s}_X(M)_0$. We first prove that 
the equations $\partial^\pm U^\pm = D $ have distributional solutions. In fact, by Theorem \ref{Hbounded}, for all zero-average functions $v \in H^{r+2, s+2\gamma}(M)$ we have 
$$
\vert \langle D,  v \rangle  \vert \leq \vert D \vert_{-r,-s}
\vert v \vert_{r,s} \leq C_{r,s} \vert H v \vert_{r, s+2\gamma}
\leq C_{r,s} \vert \partial^\pm v \vert_{r+1, s+2\gamma} \,,
$$
hence we can define a distribution $U^\pm$ by 
$$
\langle U, \partial^\pm v \rangle = - \langle D,  v \rangle \,,
\quad \text{ for all } v \in \in H^{r+2, s+2\gamma}(M)\,,
$$
the extended to the whole space $H^{r+1, s+2\gamma}(M)$ first by
continuity for the extension to the closure of the range of the foliated Cauchy-Riemann operator in $H^{r+1, s+2\gamma}(M)$, then by orthogonality (as zero on the orthogonal complement).
We have thus constructed a solution $U^\pm \in 
H^{-(r+1), -(s+2\gamma)}(M) $ of the equation $\partial^\pm U^\pm = D $. Let then $$m^\pm = X U^\pm \in H^{-(r+2), -(s+2\gamma)}(M)\,.$$ We claim that $m^\pm \in \mathcal  M^{r+2, s+2\gamma, \pm}_0$ and that $\mathcal D^\pm (m^\pm) =D$.  In fact,
$$
\partial^{\pm} m^\pm = \partial^\pm (XU^\pm )= X (\partial^\pm U^\pm) =XD =0\,,
$$
hence $m^\pm \in  \mathcal  M^{r+2, s+2\gamma, \pm}$ and vanish on constant functions since $m^\pm$ are derivatives. 
Finally, for any $r'' >r+2$ and $s'' >s+2\gamma$, the distribution $m^\pm \in \mathcal M^{r'',s'',\pm}_0$ and 
$\mathcal D^\pm (m^\pm) =D$ by the definition of the maps.
The argument is thus complete.
\end{proof}

\begin{corollary}
\label{cor:CE_inv_dist}
Let us assume that the holonomy of the flat surface $(S,R)$
satisfies a simultaneous Diophantine  condition of exponent $\gamma >0$. For all $r>2$ and $s>3/2 +\gamma$, the space $\mathcal I^{r,s}_X \subset H^{-r,-s}(M)$ of $X$-invariant distributions has countable dimension.
\end{corollary}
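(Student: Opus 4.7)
The plan is to combine the surjectivity statement in Theorem \ref{thm:Dpm} with the Fourier decomposition in $\Theta$ and the finite-dimensionality of the slices of meromorphic/anti-meromorphic distributions given by Theorem \ref{thm:meromorphic}. The strategy reduces counting $X$-invariant distributions to counting (anti-)meromorphic $n$-differentials with controlled orders of poles at the conical points, which by Riemann-Roch is finite-dimensional for each $n\in \Z$; summing over $n$ produces a countable family whose span is dense.

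First I would peel off the $X$-invariant constants: the volume distribution $\mathrm{vol}_M$ is $X$-invariant in $H^{-r,-s}(M)$ for every $r,s\geq 0$, and any $D\in \mathcal{I}^{r,s}_X(M)$ decomposes uniquely as a scalar multiple of $\mathrm{vol}_M$ plus an element of $\mathcal{I}^{r,s}_X(M)_0$, so it suffices to handle the latter subspace. By Theorem \ref{thm:Dpm}, choosing $r''>r+2$ and $s''>s+2\gamma$, the bounded linear maps
$$
\mathcal{D}^\pm : \mathcal{M}^{r'',s'',\pm}_0 \longrightarrow \mathcal{I}^{r,s}_X(M)_0
$$
are onto. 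Hence the problem reduces to showing that the space of foliated meromorphic and anti-meromorphic distributions of fixed Sobolev order vanishing on constants is the closure of a countably-dimensional subspace.

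For this, I would use the $\Theta$-Fourier decomposition of Lemma \ref{fouriertangent} together with the commutation $[\Theta,\partial^\pm]=\mp i \partial^\pm$: these imply that the projections $\pi_n$ extend to bounded operators on $H^{-r'',-s''}(M)$, commute with $\partial^\pm$ up to an index shift, and therefore send $\mathcal{M}^{r'',s'',\pm}_0$ into the finite-dimensional slices $\mathcal{M}^{\pm,r''}_{n}$ described in Theorem \ref{thm:meromorphic} (finite-dimensional by Riemann-Roch applied to differentials with the prescribed pole bounds $k_p > (n-1)\alpha_p - r'' - 1$). For any $m^\pm \in \mathcal{M}^{r'',s'',\pm}_0$ the series $\sum_n \pi_n(m^\pm)$ converges to $m^\pm$ in $H^{-r'',-s''}(M)$, and by continuity of $\mathcal{D}^\pm$ we obtain
$$
\mathcal{D}^\pm(m^\pm) \,=\, \sum_{n\in \Z} \mathcal{D}^\pm\bigl(\pi_n(m^\pm)\bigr) \quad \text{in } H^{-r,-s}(M)\,.
$$
Thus the countable union $\bigcup_\pm \bigcup_{n\in \Z} \mathcal{D}^\pm(\mathcal{M}^{\pm,r''}_n)$ consists of finite-dimensional subspaces whose linear span is dense in $\mathcal{I}^{r,s}_X(M)_0$, giving the desired countable (topological) dimension.

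The main technical point I expect to verify carefully is the compatibility of $\mathcal{D}^\pm$ with the Fourier projections $\pi_n$—i.e., that the construction of $\mathcal{D}^\pm$ through solving the cohomological equation and then applying $\partial^\pm$ (Theorem \ref{thm:Dpm}) respects the $\Theta$-grading. This should follow cleanly from $[\Theta, X]=Y$, $[\Theta,Y]=-X$, which imply that the orthogonal-to-invariants right inverse of $X$ commutes with $\pi_n$, combined with the index-shift $\partial^\pm : E_n \to E_{n\mp 1}$ of Lemma \ref{lemma:plus_minus}.
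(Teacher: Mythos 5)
Your approach is fundamentally in the same spirit as the paper's (both rely on surjectivity of $\mathcal D^\pm$ from Theorem \ref{thm:Dpm}, the $\Theta$-Fourier decomposition, and the Riemann--Roch finite-dimensionality of the slices $\mathcal M^{\pm,r}_n$ implicit in Theorem \ref{thm:meromorphic}), but the route is genuinely different. The paper computes the \emph{codimension of the kernel} of $\mathcal D^\pm$: it observes that $m^\pm\in\mathrm{Ker}(\mathcal D^\pm)$ forces $m^\pm\in\partial^\mp(\mathcal M^\pm)$, and then uses the recursion \eqref{eq:CR_n_diff} (the Cauchy--Riemann operator lowers the order of zero by one on each $\Theta$-eigenspace) to show the range of $\partial^\mp$ on meromorphic distributions has at most countable codimension; additionally it invokes Corollary \ref{cor:mero_sq_int} to see that the spaces $\mathcal M^{\pm,0}_n$ are non-trivial for an infinite set of $n$, which gives the lower bound (the space is at least countably infinite in the given fixed Sobolev range). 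Your proof bypasses the kernel analysis entirely by observing that the finite-sum subspace $\bigoplus_n \mathcal M^{\pm,r''}_n$ is dense in $\mathcal M^{r'',s'',\pm}_0$ and is carried by the bounded map $\mathcal D^\pm$ onto a dense countable-dimensional subspace of $\mathcal I^{r,s}_X(M)_0$. That shortcut is cleaner, buys you a self-contained argument, and your caveat ``(topological) dimension'' is the correct one: a closed infinite-dimensional subspace of $H^{-r,-s}(M)$ cannot have countable Hamel dimension, so some such topological reading is forced on both proofs. What your argument does not deliver is the \emph{lower} bound: you have shown that $\mathcal I^{r,s}_X(M)_0$ has a countable-dimensional dense subspace, but not that this specific Sobolev slice is non-trivial (let alone infinite-dimensional); the paper's use of Corollary \ref{cor:mero_sq_int} is precisely to ensure some of the slices $\mathcal M^{\pm,0}_n$ are non-zero and land in a fixed $H^{-r,-s}(M)$.

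One concrete error in your closing technical remark: the commutation relations $[\Theta,X]=Y$, $[\Theta,Y]=-X$ do \emph{not} imply that the orthogonal-to-invariants right inverse of $X$ commutes with $\pi_n$; indeed, since $\Theta$ and $X$ do not commute, $X$ does not preserve the eigenspaces $E_n$ and neither does its right inverse, so $\mathcal D^\pm$ does not respect the $\Theta$-grading. Fortunately your middle paragraph does not actually use this — you only use linearity and continuity of $\mathcal D^\pm$ together with the convergence $\sum_n\pi_n(m^\pm)\to m^\pm$, which is all you need and is correct. You should simply drop the claim that $\mathcal D^\pm$ is $\Theta$-graded; it is the \emph{domain}, not the image, that you decompose.
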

\begin{proof}
It suffices to prove that the kernel of the maps
$$\mathcal D^\pm: \mathcal M^{r',s',\pm}_0 \to \mathcal I^{r,s}_X(M) \subset H^{-r, -s}(M)$$ have countable codimension.
Let $m^\pm \in \text{Ker} (\mathcal D^\pm) \cap H^{-r',-s'}(M)$. By the definition of the maps $\mathcal D^\pm$, there exists a distribution $U^\pm \in H^{-(r-1), -s}(M)$ such that $\partial^\pm U^\pm =0$ and $X U^\pm = m^\pm$. It follows that
$U^\pm$ is a foliated meromorphic/ant-meromorphic function and
$$
m^\pm = X U^\pm = \frac{1}{2}(\partial^+ + \partial^-) U^\pm =
\frac{1}{2} \partial^\mp U^\pm\,. 
$$
In summary if $m^\pm \in \text{Ker} (\mathcal D^\pm) \cap H^{-r',-s'}(M)$, then $m^\pm \in \partial ^\mp (\mathcal M^{(r-1),s,\pm})$. It then suffices to prove that the range of Cauchy-Riemann operators in the spaces of all foliated meromorphic/anti-meromorphic distributions has countable codimension. As we have remarked above (see formula \eqref{eq:CR_n_diff}) on all the spaces $\mathcal M^{\pm,r}_n$ of meromorphic/anti-meromorphic distributions which belong the distributional eigenspace $E^{-r}_n$ of the operator $\Theta$, the Cauchy-Riemann operators lower by one the order of zero of the corresponding $n$-meromorphic/anti-meromorphic differential. Therefore, for each $n\in \Z$, there exists $r_n>0$ such that the $n$-meromorphic/anti-meromorphic differential of maximal total order of zero gives a foliated meromorphic/anti-meromorphic distribution $m^\pm_n \in \mathcal M^{\pm,r_n}_n$ which does not belong to the range of the Cauchy-Riemann operator $\partial^\mp$, hence the corresponding invariant distributions 
$D^\pm_n:=\mathcal D^\pm (m^\pm_n)$ are non-zero.  The set of distributions $\{D^\pm_n\}$ is linearly independent, since otherwise there would be a linear combination of the set 
$\{m^\pm_n\}$  which belongs to the range of the Cauchy-Riemann operator $\partial^\mp$. Since the Cauchy-Riemann operators respect the spectral decomposition of $\Theta$, it would follow
that each distribution $m^\pm_n$ in the linear combination belongs to the range of the Cauchy-Riemann operator $\partial^\mp$, in contradiction with the assumption that it has
maximal order of zero.

The above argument proves that the space $\mathcal I_X$ of all
$X$-invariant distributions has countable dimension. The proof
that there exists $r,s >0$ such that the space $\mathcal I^{r,s}_X$ of {\it finite order} $X$-invariant distributions has
countable dimension is based on Corollary \ref{cor:mero_sq_int}.
In fact, by that corollary there exist infinite sets $\Z^\pm_S\subset \Z$ such that $\mathcal M^{\pm,0}_n \not=\{0\}$.
By Theorem \ref{thm:Dpm}, the $X$-invariant distributions in $\mathcal D^\pm (\mathcal M^{\pm,0}_n) $ belong to the dual
Sobolev space $H^{-r,-s}(M)$ for all $r>2$ and $s>2\gamma+3/2$,
hence $\mathcal I^{r,s}_X$ has countable dimension.
\end{proof}

{
\addcontentsline{toc}{chapter}{References}
\bibliographystyle{plain}
\bibliography{references}
}
 
\end{document}